%% file: article_gcd.tex
\documentclass[a4paper,12pt,reqno]{amsart}
\input{preamble}
\include{contents}
\usepackage{xcolor}
\definecolor{red}{rgb}{0,0,0} 

\begin{document}
\author{Edison H L Au-Yeung}
\title[An upper bound for valuations of elliptic nets at primes of bad reduction]{An upper bound for valuations of elliptic nets at primes of bad reduction}
\address{Mathematics Institute, Zeeman Building, University of Warwick, Coventry CV4 7AL, England}
\email{hang.au-yeung@warwick.ac.uk}
\date{\today}
\thanks{The author is supported by the Warwick Mathematics Institute Centre for Doctoral Training, and gratefully acknowledges funding from the University of Warwick.}

\begin{abstract}
Let $E/K$ be an elliptic curve over a field defined by the short Weierstrass equation and let $\bm{P}=(P_1, P_2) \in E(K)^2\setminus \{(O, O), (\pm P, \pm P)\}$ be a pair of non-torsion points, where $O$ denotes the point at infinity. We give a uniform upper bound for the quantity $\Min(\nu(\Phi_{(v_1, v_2)}(\bm{P})), 2\nu(\Psi_{(v_1, v_2)}(\bm{P})))$, where $\nu$ is a normalised discrete valuation associated to primes where $E$ has bad reduction and $\Psi^2_{(v_1, v_2)}(\bm{P})$, $\Phi_{(v_1, v_2)}(\bm{P})$ are rational functions for describing $\Z$--linear combination of points $[v_1]P_1+[v_2]P_2$ on $E$, higher--dimensional analogues of division polynomials $\psi^2_n, \phi_n$ known as net polynomials. They are also elliptic nets, which are $n$--dimensional arrays with values in $K$ satisfying the same nonlinear recurrence relation that division polynomials do as well. Hence, they are useful in studying counting problems on elliptic curves. 
\end{abstract}
\maketitle
\setcounter{tocdepth}{1}
\tableofcontents
\input{Introduction}
\input{Review}
\input{Elliptic_nets_and_choice_of_points}

\input{Resultant_and_net_polynomials}

\input{Main_result}

\subsection*{Acknowledgements}
I would like to express my gratitude to my supervisors, Dr.\ Simon L.\ Rydin Myerson, and Dr.\ Helena Verill for their incredible support over the past year, along with their insightful comments, and for carefully reading earlier versions of this paper.
\bibliographystyle{abbrv}
\bibliography{Reference2}

\end{document}

%% file: preamble.tex
\usepackage{geometry}
\usepackage[utf8]{inputenc}

\usepackage{amsfonts}  
\usepackage{amsthm}    
\usepackage{amsmath}   
\usepackage{bm}        
\usepackage{bbm}
\usepackage{amssymb}   
\usepackage{commath}	

\usepackage[dvipsnames, table]{xcolor}	
\usepackage{graphicx}  
\usepackage{tikz}      
\usepackage{tikz-cd}   
\usepackage{pgfplots}  
\pgfplotsset{compat=1.18}
\usetikzlibrary{arrows.meta, positioning, calc, math}  
\tikzset{>={Classical TikZ Rightarrow[width=7pt, length=5pt]}}
\usetikzlibrary{cd}  

\usepackage{array}     
\usepackage{xtab}      
\usepackage{url}

\usepackage{comment}
\usepackage[T1]{fontenc} 
\usepackage{lmodern} 
\usepackage{multicol}
\usepackage{pdflscape, adjustbox}	
\usepackage{parskip}       
\usepackage{mathrsfs}      
\usepackage{fancyhdr}      
\usepackage[hidelinks]{hyperref}  
\usepackage{mathtools}     
\usepackage[shortlabels]{enumitem}  
\usepackage{witharrows}    
\usepackage{float}         
\usepackage{makecell}
\usepackage{cleveref}	

\newcommand{\Z}{\ensuremath{\mathbb{Z}}}
\newcommand{\Q}{\ensuremath{\mathbb{Q}}}
\newcommand{\N}{\ensuremath{\mathbb{N}}}
\newcommand{\R}{\ensuremath{\mathbb{R}}}
\newcommand{\C}{\ensuremath{\mathbb{C}}}
\newcommand{\F}{\ensuremath{\mathbb{F}}}
\newcommand{\q}{\ensuremath{\mathfrak{q}}}
\newcommand{\p}{\ensuremath{\mathfrak{p}}}
\newcommand{\OO}{\ensuremath{\mathcal{O}}}
\newcommand{\V}{\ensuremath{\mathbb{V}}}

\DeclareMathOperator{\Min}{min}

\DeclareMathOperator{\Gcd}{gcd}

\DeclareMathOperator{\Deg}{deg}

\DeclareMathOperator{\Res}{Res}
\DeclareMathOperator{\Det}{det}
\DeclareMathOperator{\Syl}{Syl}
\DeclareMathOperator{\Weight}{weight}

\newtheorem{prop}{Proposition}[section]

\newtheorem{theorem}[prop]{Theorem}
\newtheorem{lemma}[prop]{Lemma}

\theoremstyle{definition}
\newtheorem{remark}[prop]{Remark}
\newtheorem{example}[prop]{Example}

\newtheorem{defn}[prop]{Definition}

\numberwithin{equation}{section}

\crefname{prop}{Proposition}{Propositions}
\Crefname{prop}{Proposition}{Propositions}

\crefname{cor}{Corollary}{Corollaries}
\Crefname{cor}{Corollary}{Corollaries}

\crefname{defn}{Definition}{Definitions}
\Crefname{defn}{Definition}{Definitions}

%% file: Introduction.tex
\section{Introduction}
\subsection{Motivation}
Let $E/K$ be an elliptic curve defined over a $K$ with the following integral short Weierstrass equation:
\begin{equation}\label{short EC}
E\colon y^2 = x^3 + Ax + B;\, A, B \in \OO_K.
\end{equation}

Let $P=(x, y)$ be a non-torsion point on $E(K)$, $n$ an integer. Then we can express the point $nP$ with polynomials $\phi_n, \psi_n\in K[x, y]$:
\begin{equation}\label{expression}
x(nP)=\frac{\phi_{n}(P)}{\psi_{n}^{2}(P)}.
\end{equation}
The polynomial $\psi_n$ is called the $n$--th division polynomial associated to $E$. 

Now let $K$ be a field with a discrete valuation $\nu$, let $\OO_{\nu}=\{x\in K\colon \nu(x)\geq 0\}$ and $\p=\{x\in K\colon \nu(x)>0\}$. In \cite[Theorem A]{AyadS}, Ayad proved the following result regarding the valuation of $\psi_{n}$. 

\begin{theorem}[Ayad]\label{Ayad TFAE}
Let $\nu$ and $\p$ be as above. Let $E/K$ be an elliptic curve defined by the equation in (\ref{short EC}). Let $P \in E(K)$ be a non--torsion point in $E(K)$ such that $P \not\equiv O \bmod \p$. Then the following assertions are equivalent:
\begin{enumerate}[(a)]
    \item $\nu(\psi_2(P))$ and $\nu(\psi_3(P)) > 0$.
    \item For all integers $n \geq 2$, we have $\nu(\psi_n(P)) > 0$.
    \item There exists an integer $n_0 \ge 2$ such that $\nu(\psi_{n_0}(P))$ and $\nu(\psi_{n_0+1}(P)) > 0$.
    \item There exists an integer $m_0 \ge 2$ such that $\nu(\psi_{m_0}(P))$ and $\nu(\phi_{m_0}(P)) > 0$.
    \item The reduction of $P$ modulo $\mathfrak{p}$ is singular.
\end{enumerate}
\end{theorem}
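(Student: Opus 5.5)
We prove the cycle (a)$\Rightarrow$(b)$\Rightarrow$(c)$\Rightarrow$(d)$\Rightarrow$(e)$\Rightarrow$(a). Throughout we use that $P \not\equiv O \bmod \p$ forces $x(P), y(P) \in \OO_\nu$, so every $\psi_n(P)$ and $\phi_n(P)$ lies in $\OO_\nu$. This holds because the division polynomials have coefficients in $\Z[A,B]$.

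\emph{From (a) to (c), (e) and (d).}

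(a)$\Rightarrow$(b) is an induction on $n$ using the standard recurrences. For odd indices,
\[
\psi_{2m+1}=\psi_{m+2}\psi_m^3-\psi_{m-1}\psi_{m+1}^3 .
\]
For even indices,
\[
2y\,\psi_{2m}=\psi_m\bigl(\psi_{m+2}\psi_{m-1}^2-\psi_{m-2}\psi_{m+1}^2\bigr).
\]
Given (a), the base cases are $\psi_2=2y$ and $\psi_3$, together with $\psi_4=\psi_2\cdot(\text{integral polynomial})$. Every term on the right-hand side contains a factor $\psi_k$ with $k\ge 2$ and smaller index, so positivity propagates. For the even case one should use the form $\psi_{2m}=\psi_m(\cdots)/\psi_2$ written as an integral polynomial in $x,y$. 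Alternatively, one can note that $\psi_{2m}/\psi_2$ is a polynomial in $x$ with integral coefficients, and argue with $\psi_m$ or $\psi_{m\pm1}$ factors. I would double-check this parity bookkeeping.

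(b)$\Rightarrow$(c) is trivial.

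(c)$\Rightarrow$(e): let $\tilde P$ be the reduction, which is an affine point of $\tilde E$. If $\tilde P$ is nonsingular, it lies in the group $\tilde E_{ns}(k)$. Division polynomials are compatible with reduction: the reductions of $\psi_n$ and $\phi_n$ are the division polynomials of the (possibly singular) Weierstrass cubic. On the nonsingular locus, $\tilde\psi_n(\tilde P)=0$ iff $n\tilde P=\tilde O$. Then $n_0\tilde P=(n_0+1)\tilde P=\tilde O$ gives $\tilde P=\tilde O$, a contradiction.

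(c)$\Rightarrow$(d) uses the identity
\[
\phi_n=x\psi_n^2-\psi_{n+1}\psi_{n-1}.
\]
Taking $m_0=n_0+1$, the term $\psi_{n_0+2}\psi_{n_0}$ has positive valuation, and so does $\psi_{n_0+1}^2$.

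\emph{From (d) and (e) back to (a).}

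(d)$\Rightarrow$(e): if $\tilde P$ were nonsingular, $\tilde\psi_{m_0}(\tilde P)=0$ would give $m_0\tilde P=\tilde O$. But on $\tilde E_{ns}$, the polynomials $\tilde\phi_{m_0}$ and $\tilde\psi_{m_0}^2$ have no common zero at a nonsingular point. This is the key point. The reason is that $[m_0]$ is a morphism on $\tilde E_{ns}$, so $x\circ[m_0]$ has a genuine pole of order exactly $2$ at points where $m_0\tilde P=\tilde O$. Hence $\tilde\phi_{m_0}(\tilde P)\neq0$. I would justify this via the identity $\phi_n^3$ expressed through the resultant-type relation $\mathrm{Res}(\phi_n,\psi_n^2)=\pm\Delta^{n^2(n^2-1)/6}$. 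More precisely, one uses polynomial combinations $f\phi_n+g\psi_n^2=\Delta^{N}$, with $f,g$ integral. These show a common zero of $\tilde\phi_n$ and $\tilde\psi_n^2$ can only arise when $\tilde\Delta=0$ and $\tilde P$ is the singular point.

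(e)$\Rightarrow$(a): at the singular point we have $\tilde y=0$, so $\nu(2y)>0$. We also have $3\tilde x^2+\tilde A=0$. Note that $\psi_3=3x^4+6Ax^2+12Bx-A^2$ vanishes at a singular point. This is a direct computation: writing the cubic as $(x-a)^2(x+2a)$, one gets $A=-3a^2$ and $B=2a^3$, and substituting $x=a$ gives $0$.

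The main obstacle is (d)$\Rightarrow$(e), i.e.\ establishing that $\tilde\phi_n$ and $\tilde\psi_n^2$ share no nonsingular zero. This is also where residue characteristic $2$ or $3$ needs care. There I would rely on the integral resultant identity rather than geometry.
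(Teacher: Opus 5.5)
The paper does not prove this statement; it quotes it from \cite[Theorem A]{AyadS}, so your argument has to stand on its own.

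\textbf{Where the proposal is fine.} The routine implications are correct. In (a)$\Rightarrow$(b) the even case is immediate from $\psi_{2m}\in\psi_2\,\Z[A,B,x]$, and the odd case follows from the recurrence. The implications (b)$\Rightarrow$(c) and (c)$\Rightarrow$(d) are also fine. In fact (d)$\Rightarrow$(c) is just as elementary, from $\psi_{m_0+1}\psi_{m_0-1}=x\psi_{m_0}^2-\phi_{m_0}$ and $\psi_1=1$.

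\textbf{The gap.} All of the theorem's content therefore sits in (c)$\Rightarrow$(e), and there you only assert the key fact. That fact is: on the nonsingular part of the reduced cubic, $\tilde\psi_n(\tilde P)=0$ iff $n\tilde P=\tilde O$. Equivalently, $\tilde\phi_n$ and $\tilde\psi_n^2$ have no common zero at a nonsingular point. This is essentially the theorem itself, and neither justification you offer proves it.
\begin{itemize}
\item The identity $a\phi_n+b\psi_n^2=\Delta^N$ only shows that (d) forces $\nu(\Delta)>0$. Once $\nu(\Delta)>0$, it reduces modulo $\p$ to $0=0$ and says nothing about \emph{where} on $\tilde E$ the common zeros lie. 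Over a singular cubic, $\tilde\phi_n$ and $\tilde\psi_n^2$ really do share factors. For example, for $y^2=x^3-3x+2$ and $n=2$ both are divisible by $(x-1)^2$. So showing that the common zeros sit at the singular point is exactly what has to be proved.
\item The pole-order argument is circular. Even granting $\tilde\phi_{m_0}/\tilde\psi_{m_0}^2=x\circ[m_0]$ on $\tilde E_{ns}$, a pole of this function at $\tilde P$ does not rule out $\tilde\phi_{m_0}(\tilde P)=0$. That would require already knowing the order of vanishing of $\tilde\psi_{m_0}$ at $\tilde P$, i.e.\ the divisor of $\tilde\psi_{m_0}$ on $\tilde E_{ns}$, which is the point in question.
\item The pole-order claim is also false as stated. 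The pole order is $2e_{[m_0]}(\tilde P)$, not $2$. For additive reduction with $p\mid m_0$, the map $[m_0]$ is even identically zero on $\tilde E_{ns}(\bar k)$, where $k$ is the residue field. Concretely, for $y^2=x^3+B$ and $\p\mid 3$ one has $\psi_3=3x^4+12Bx\equiv 0 \bmod \p$ identically, so $x\circ[3]$ is not a function at all.
\end{itemize}

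\textbf{How to close it.} You need a genuine statement about points with nonsingular reduction. One such statement is
\[
2\nu(\psi_n(P))=\max\bigl(0,-\nu(x(nP))\bigr) \quad\text{for } P\in E_0(K) \text{ with } \nu(x(P))\ge 0 .
\]
It follows from the explicit local height on $E_0(K)$ together with its transformation rule under $[n]$, which involves $\psi_n$. Alternatively, prove it by induction using the reduction homomorphism $E_0(K)\to\tilde E_{ns}(k)$ and the formal group on $E_1(K)$. It immediately gives $\min\bigl(\nu(\phi_n(P)),2\nu(\psi_n(P))\bigr)=0$ whenever $\tilde P$ is nonsingular.

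\textbf{A smaller slip.} In (e)$\Rightarrow$(a) you use $\tilde y=0$ at the singular point. This fails in residue characteristic $2$, where the singular point of a short Weierstrass cubic is $(\sqrt{\tilde A},\sqrt{\tilde B})$. The identity $\psi_3=2gg''-(g')^2$, with $g=x^3+Ax+B$, gives (a)$\Leftrightarrow$(e) directly in every characteristic.
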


In particular, a natural question to ask would be quantifying the common valuation, also known as the \emph{cancellation exponent} of $\psi_{m_0}$ and $\phi_{m_0}$, $\Min\left( \nu\left(\phi_{m_0}(P)\right), 2\nu\left(\psi_{m_0}(P)\right)\right)$ in (d) explicitly for such valuations. In \cite[eq (1.3)]{AyadS}, Ayad gave an answer by mentioning that
\begin{equation}\label{Ayad resultant}
\Res(\phi_n, \psi_n^2)=\Delta^{\frac{1}{6}n^2(n^2-1)},
\end{equation}
where $\Delta=-16(4A^3+27B^2)$ denotes the discriminant of the curve $E$ defined by (\ref{short EC}). For the proof of (\ref{Ayad resultant}), one can refer to \cite[Lemma 2]{AyadA}, \cite[Lemma 1.7.11(b)]{Connell} or \cite{hindry_cassels} (an analytic proof can also be found in \cite{Schmidt}). Since $P$ is singular modulo $\p$, we know that $\nu(x(P))\geq 0$. By properties of the resultant, we can then deduce that for such valuations,
\begin{equation}\label{rank 1 discriminant bound} 
\Min\left( \nu\left(\phi_{n}(P)\right), 2\nu\left(\psi_{n}(P)\right)\right)\leq \frac{1}{6}n^2(n^2-1)\nu(\Delta).
\end{equation}

For problems related to integral, or $S$--integral points on elliptic curves, such an upper bound becomes quite useful: suppose $P \in E(\Q)$ is an integral point, then (\ref{Ayad resultant}) tells us that in fact 
\[
\Gcd\left(\phi_n(P), \psi_n^2(P)\right)\leq \Delta^{\frac{1}{6}n^2(n^2-1)}.
\]
Combining with existng explicit formulae for the cancellation exponent of $\phi_n$ and $\psi^2_n$, such as the formulae in \cite{Cheon}, \cite{Stange2}, \cite{Verzobio2} and \cite{Yabuta}, this allows us to produce an upper bound of the naive height of integral points, $h(P)=\log|x(P)|$ with $P \in \Z$, and similarly for $S$--integral points. For more details, readers may refer to \cite{Ingram} for integral points and \cite{AyadS} for $S$--integral points. 

Currently, there are existing generalisations of the above results in literature. In \cite{Stange}, Stange generalised the concept of division polynomials to net polynomials, which can be used to describe $\Z$--linear combination of points. 
\begin{defn}[Net polynomial and elliptic denominator net]\label{net poly def}
For an arbitrary field $K$, consider the affine coordinate ring 
\[
R_{n}=K[x_i, y_i]_{1\leq i \leq n}[(x_{i}-x_{j})^{-1}]/\langle f(x_{i}, y_{i})\rangle_{1\leq i \leq n},
\]
where $f$ is the defining polynomial (\ref{short EC}) for the elliptic curve $E/K$. Let $\bm{P}=(P_{1}, ..., P_{n}) \in E(K)^{n}$ and $\bm{v}=(v_{1}, \dotsc, v_{n}) \in \Z^{n}$. If $P_{i}\neq O$ for all $i$ and $P_i \neq \pm P_j$ for any $i \neq j$, then there exist rational functions $\Psi_{\bm{v}}(\bm{P})$, $\Phi_{\bm{v}}(\bm{P})$, $\bar\Omega_{\bm{v}}(\bm{P}) \in R_{n}$ such that 
\begin{equation}\label{notation}
\bm{v} \cdot \bm{P} = [v_{1}]P_{1} +...+[v_{n}]P_{n}=\left(\frac{\Phi_{\bm{v}}(\bm{P})}{\Psi^{2}_{\bm{v}}(\bm{P})}, \frac{\bar\Omega_{\bm{v}}(\bm{P})}{\Psi^{3}_{\bm{v}}(\bm{P})}\right).
\end{equation}
The polynomial $\Psi_{\bm{v}}$ is defined to be the \emph{$\bm{v}$-th net polynomial}, which is an elliptic net (for proof, see \cite[Theorem 4.1]{Stange}).
\end{defn}
For the derivation of these rational functions, readers may refer to \cite[Section 4]{Stange} for details. On the other hand, Akbary et al.\ in \cite{Akbary} generalised Ayad's results for net polynomials. 
\begin{theorem}\cite[Theorem 1.5]{Akbary}\label{Akbary TFAE}
Let $E/K$ be an elliptic curve defined by the equation in (\ref{short EC}). Let $\bm{P}=(P_1, \ldots, P_n) \in E(K)^n$ be such that $P_i \not\equiv O \bmod \p$ and $P_i\pm P_j \not\equiv O \bmod \p$ for $1\leq i < j \leq n$. Then the following are equivalent:
\begin{enumerate}
    \item[(a)] \text{There exists $1 \le i \le n$, such that} $\nu(\Psi_{2\bm{e_i}}(\bm{P})) > 0 \text{ \textit{and} } \nu(\Psi_{3\bm{e_i}}(\bm{P})) > 0$.
    \item[(b)] \text{There exists $1 \le i \le n$ such that for all $n \ge 2$ we have} $\nu(\Psi_{n\bm{e_i}}(\bm{P})) > 0$.
    \item[(c)] \text{There exists $\bm{v} \in \mathbb{Z}^n$ and $1 \le i \le r$ such that} $\nu(\Psi_{\bm{v}}(\bm{P})) > 0 \text{ and } \nu(\Psi_{\bm{v}+\bm{e_i}}(\mathbf{P})) > 0$.
 \item[(d)] \text{There exists $\bm{v} \in \mathbb{Z}^n$ such that} $\nu(\Psi_{\bm{v}}(\bm{P})) > 0 \text{ \text{and} } \nu(\Phi_{\bm{v}}(\bm{P})) > 0$.
    \item[(e)] \text{There exists $1 \le i \le r$ such that $P_i \pmod{\mathfrak{p}}$ is singular.}
\end{enumerate}
\end{theorem}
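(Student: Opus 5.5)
We prove the equivalence of the five conditions by running the cycle (a)$\Rightarrow$(b)$\Rightarrow$(c)$\Rightarrow$(d)$\Rightarrow$(e)$\Rightarrow$(a). We reduce as much as possible to the rank-one case, \Cref{Ayad TFAE}, applied to a single $P_i$. The key observation is that along a coordinate direction the net polynomial is a division polynomial: $\Psi_{n\bm{e_i}}(\bm{P})=\psi_n(P_i)$ and $\Phi_{n\bm{e_i}}(\bm{P})=\phi_n(P_i)$. This follows from Stange's construction in \cite[Section 4]{Stange}, since the restriction of an elliptic net to the $i$-th axis is the elliptic divisibility sequence of $P_i$ with the same normalisation. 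The standing hypotheses give $P_i\not\equiv O \bmod \p$, so \Cref{Ayad TFAE} applies to each $P_i$ individually.

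The first implications are then routine.
\begin{itemize}
\item (a)$\Rightarrow$(b): this is (a)$\Rightarrow$(b) of \Cref{Ayad TFAE} for $P_i$.
\item (b)$\Rightarrow$(c): take $\bm{v}=2\bm{e_i}$ and use $\bm{v}+\bm{e_i}=3\bm{e_i}$.
\item (e)$\Rightarrow$(a): by (e)$\Rightarrow$(a) of \Cref{Ayad TFAE}, $\nu(\psi_2(P_i)),\nu(\psi_3(P_i))>0$, which is (a) via the identification above.
\end{itemize}

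For (c)$\Rightarrow$(d) and (d)$\Rightarrow$(e) we work with a genuinely off-axis $\bm{v}$.
\begin{itemize}
\item (c)$\Rightarrow$(d): suppose $\nu(\Psi_{\bm{v}}(\bm{P}))>0$ and $\nu(\Psi_{\bm{v}+\bm{e_i}}(\bm{P}))>0$. We use the elliptic net identity expressing $x$-coordinate differences, namely the net analogue of $\psi_{m+n}\psi_{m-n}=-\psi_m^2\psi_n^2(x(mP)-x(nP))$. Concretely,
\[
\Psi_{\bm{v}+\bm{e_i}}\Psi_{\bm{v}-\bm{e_i}}=\Psi_{\bm{v}}^2\Psi_{\bm{e_i}}^2\bigl(x(\bm{e_i}\cdot\bm{P})-x(\bm{v}\cdot\bm{P})\bigr)=\Psi_{\bm{e_i}}^2\Psi_{\bm{v}}^2x(P_i)-\Psi_{\bm{e_i}}^2\Phi_{\bm{v}}.
\]
Since $\Psi_{\bm{e_i}}=1$, this reads $\Phi_{\bm{v}}=x(P_i)\Psi_{\bm{v}}^2-\Psi_{\bm{v}+\bm{e_i}}\Psi_{\bm{v}-\bm{e_i}}$. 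Here $\nu(x(P_i))\ge0$ because $P_i\not\equiv O$, and all net values at $\bm{P}$ are $\p$-integral because the net polynomials are integral polynomials in the $x_j,y_j$ and in the $(x_j-x_k)^{-1}$. The hypothesis $P_j\pm P_k\not\equiv O$ makes the inverted differences units. Each term therefore has positive valuation, so $\nu(\Phi_{\bm{v}}(\bm{P}))>0$.
\item (d)$\Rightarrow$(e): argue by contraposition. Assume every $P_i$ reduces to a nonsingular point. Then $\bm{v}\cdot\bm{P}$ reduces to the point $\bm{v}\cdot\tilde{\bm{P}}$ of the group $\tilde E_{ns}(k)$. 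Reduction commutes with the group law on the nonsingular locus, and the formulas defining $\Phi_{\bm{v}},\Psi_{\bm{v}}$ specialise to the same formulas over the residue field, since the denominators $x_j-x_k$ stay units. Hence $\tilde\Phi_{\bm{v}}/\tilde\Psi_{\bm{v}}^2$ is the $x$-coordinate of a genuine point of $\tilde E_{ns}$ (or $\tilde\Psi_{\bm{v}}=0$ when that point is $O$). For the pair $(\tilde\Phi_{\bm{v}},\tilde\Psi_{\bm{v}}^2)$ to vanish simultaneously one would need $\tilde\Phi_{\bm{v}}=0$ as well. That is impossible when $\tilde\Psi_{\bm{v}}=0$, because $\Phi$ and $\Psi^2$ have no common zero at a nonsingular point.
\end{itemize}

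This last coprimality statement is the main obstacle. We plan to prove it by induction on $\bm{v}$ using the recurrence, mimicking Ayad's argument. The induction shows that $\nu(\Psi_{\bm{v}})>0$ and $\nu(\Psi_{\bm{v}\pm\bm{e_i}})>0$ for all $i$ would propagate down to a small base case, such as $\Psi_{2\bm{e_i}}$ and $\Psi_{3\bm{e_i}}$, or to $\Psi_{\bm{e_i}+\bm{e_j}}$ and $\Psi_{\bm{e_i}-\bm{e_j}}$. These are $2y_i$, the $\psi_3$ of $P_i$, and units under the hypotheses. Simultaneous positivity of the base values then forces singular reduction of some $P_i$ by \Cref{Ayad TFAE}. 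Making this descent work in several variables is where the real work lies. The first thing to try is to descend along a single coordinate, using the identity above repeatedly to lower $|v_i|$.
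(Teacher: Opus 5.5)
The paper does not prove this statement; it quotes it from \cite[Theorem 1.5]{Akbary}, so your proposal has to stand on its own. Your steps (a)$\Rightarrow$(b)$\Rightarrow$(c), (e)$\Rightarrow$(a) and (c)$\Rightarrow$(d) are essentially fine, but the cycle is not closed. In (d)$\Rightarrow$(e), the one implication where the higher-rank net really matters, you reduce to the claim that $\tilde\Phi_{\bm v}$ and $\tilde\Psi_{\bm v}$ cannot vanish together at a tuple of nonsingular reduced points, and then leave that claim as a plan. The plan also does not work as described. The recurrences of \cite[Lemma 2.4]{Stange} write $\Psi_{\bm v}$ as a polynomial in net values of \emph{smaller} norm, so they propagate positive valuation upwards. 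Running them downwards means dividing by net values that may themselves lie in $\p$. Moreover, your identity only gives positive valuation for \emph{one} of $\Psi_{\bm v+\bm e_i}$, $\Psi_{\bm v-\bm e_i}$, not both, as your descent assumes.

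No descent is needed: the missing ingredient is the vanishing criterion applied to the reduction. Suppose every $\tilde P_i$ is nonsingular. Reduction is a homomorphism on points with nonsingular reduction, so $P_j\pm P_k\not\equiv O$ gives $\tilde x_j\neq\tilde x_k$. Since the net polynomials have integer coefficients, $\Psi_{\bm w}(\bm P)\bmod\p$ is then the value at $\tilde{\bm P}$ of the net attached to the possibly singular cubic $\tilde E$. \Cref{net polynomial vanish} then gives $\tilde\Psi_{\bm w}=0\iff\bm w\cdot\tilde{\bm P}=O$ in $\tilde E_{\mathrm{ns}}$. Your identity $\Phi_{\bm v}=x_i\Psi_{\bm v}^2-\Psi_{\bm v+\bm e_i}\Psi_{\bm v-\bm e_i}$ now shows that $\nu(\Psi_{\bm v}),\nu(\Phi_{\bm v})>0$ forces $\tilde\Psi_{\bm v}=0$ and $\tilde\Psi_{\bm v+\epsilon\bm e_i}=0$ for some $\epsilon=\pm1$. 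Hence $\epsilon\tilde P_i=(\bm v+\epsilon\bm e_i)\cdot\tilde{\bm P}-\bm v\cdot\tilde{\bm P}=O$, contradicting $P_i\not\equiv O$. This is exactly the argument of the proposition immediately after \Cref{singular vanish}, applied over the residue field, and it also gives (c)$\Rightarrow$(e) directly. One caveat: \Cref{appropriate} as stated also requires $[2]P_i\neq O$, so if some $\tilde P_i$ is $2$-torsion you must check that the criterion still applies. That should follow from the description of the zeros of $\Psi_{\bm w}$ away from the loci $P_i=O$ and $P_i=\pm P_j$.

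Two smaller points. First, in (c)$\Rightarrow$(d), the claim that $P_j\pm P_k\not\equiv O$ makes $x_j-x_k$ a unit is false when $P_j$ and $P_k$ both reduce to the singular point. That hypothesis does not exclude this case, and then $x_j\equiv x_k$. The integrality you need is \cite[Proposition 1.6]{Akbary}, as the paper notes. Second, the quoted \Cref{Ayad TFAE} assumes $P$ non-torsion, while this statement does not. For torsion $P_i$ the rank-one inputs therefore need their own justification, for instance the same reduction argument in rank one.
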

In this paper, our goal is to quantify and give an upper bound to the size of the cancellation exponent of net polynomials in the case $n=2$, a result analogous to (\ref{rank 1 discriminant bound}). 
\begin{theorem}\label{main result}
Let $\bm{P}=(P_1, P_2)$ be a pair of points in $E(K)$ with $P_1\neq \pm P_2$ such that $P_1, P_2 \not\equiv O \bmod \p$ and $P_1\pm P_2\not\equiv O \bmod \p$. For all $\bm{v}=(v_1, v_2)\in \Z^2$, let $\nu$ be a normalised discrete valuation associated to a prime $\p$ that divides the discriminant of the curve. Then 
\[
0\leq \Min\left(\nu\left(\Phi_{(v_1, v_2)}(\bm{P})\right), 2\nu\left(\Psi_{(v_1, v_2)}(\bm{P})\right)\right) \leq \left(18(v_1^2+v_2^2)^2-4(v_1^2+v_2^2)\right)\nu(\Delta).
\]
\end{theorem}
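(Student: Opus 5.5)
We plan to mimic the rank-one argument behind (\ref{rank 1 discriminant bound}): produce a resultant identity for the pair $\Phi_{\bm{v}}$, $\Psi^2_{\bm{v}}$ viewed as polynomials in suitable variables, and then use the fact that the resultant is an $\OO_\nu$-linear combination of the two polynomials. The first step is to write $\Phi_{\bm{v}}(\bm{P})$ and $\Psi^2_{\bm{v}}(\bm{P})$ in terms of the $x$-coordinates only. Set $x_1=x(P_1)$, $x_2=x(P_2)$ and $x_3=x(P_1+P_2)$, which determines $x(P_1-P_2)$ symmetrically. Using Stange's formulae in \cite[Section 4]{Stange}, we will express $\Psi^2_{\bm{v}}$ and $\Phi_{\bm{v}}$, after clearing the denominator $(x_1-x_2)^{N}$, as polynomials in $\Z[A,B,x_1,x_2,x_3]$. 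Here $N$ is a power depending on $\bm{v}$ and is roughly quadratic in $|\bm{v}|$. The hypotheses $P_1,P_2,P_1\pm P_2\not\equiv O \bmod \p$ give $\nu(x_i)\ge 0$ and make the relevant quantities integral, so the lower bound $0\le\Min(\cdot)$ follows at once.

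The second step is a Bezout/resultant identity. We will show that there exist polynomials $U_{\bm{v}},V_{\bm{v}}$ with $\OO_\nu$-integral coefficients in the $x_i$ such that
\[U_{\bm{v}}\Phi_{\bm{v}}+V_{\bm{v}}\Psi^2_{\bm{v}}=\Delta^{M(\bm{v})}\cdot(\text{unit or bounded-valuation factor}),\]
with $M(\bm{v})\le 18(v_1^2+v_2^2)^2-4(v_1^2+v_2^2)$. Evaluating at $\bm{P}$ then bounds the minimum by $M(\bm{v})\nu(\Delta)$.

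To get $M$, we intend to reduce to the rank-one identity (\ref{Ayad resultant}) rather than compute a two-variable resultant from scratch. There are two candidate routes. The first writes $\bm{v}\cdot\bm{P}=[d]Q$ with $d=\gcd(v_1,v_2)$. The second uses the elliptic-net recurrences to relate $\Psi_{\bm{v}}$ to products of division polynomials evaluated at $P_1$, $P_2$, $P_1\pm P_2$. Concretely, we will take the resultant with respect to $x_1$ and then $x_2$ of the homogenised pair $(\Phi_{\bm{v}},\Psi^2_{\bm{v}})$. Each elimination contributes a discriminant power governed by degree, $\deg\Psi^2_{\bm{v}}\approx v_1^2+v_2^2+\ldots$ in each variable. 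Iterating the resultant multiplies degrees, which is where the quartic $(v_1^2+v_2^2)^2$ arises; the constant $18$ absorbs the crude degree estimates, including the contribution of the denominator $(x_1-x_2)$. The argument will also need the standard fact that the resultant of homogeneous forms lies in the ideal they generate, up to a power of the variables.

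The main obstacle is the elimination step. Unlike the rank-one case, the common zeros of $\Phi_{\bm{v}}$ and $\Psi^2_{\bm{v}}$ in two or three variables form a positive-dimensional locus, since $\bm{v}\cdot\bm{P}=O$ is a curve condition. A single resultant therefore vanishes identically. To handle this, we will use a weighted-degree (isobaric) argument: both polynomials are isobaric in $(A,B,x_i)$, so the iterated resultant is a homogeneous polynomial in $A,B$ of computable weight. We will then show that it is nonzero and vanishes only where the curve is singular, so that it is a power of $\Delta$ by a Nullstellensatz/irreducibility argument, as in Ayad's proof. The exponent is read off from the weight: weight $12$ per $\Delta$, divided into the total weight bounded via $\deg_{x}$. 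If the degenerate locus causes trouble, our fallback is to specialise one variable using the hypothesis $P_1\pm P_2\not\equiv O$. This would make the bound valid only at such specialisations, which suffices pointwise.
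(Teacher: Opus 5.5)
There is a genuine gap in your second step. You claim your iterated resultant is nonzero, vanishes only where $\Delta=0$, and is therefore $\pm\Delta^{M}$ by a Nullstellensatz/irreducibility argument. That is false for eliminations of this kind.

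Clearing the denominator can make both polynomials vanish along $x_1=x_2$. More seriously, any elimination that eventually removes $x_3$ (or the $y_i$) passes through a norm, and this creates spurious common zeros that mix the two sign choices. Let $\sigma$ denote the conjugation $x(P_1+P_2)\mapsto x(P_1-P_2)$. If $[v_1]P_1+[v_2]P_2=O$, the norm of $\Psi_{\bm v}^2$ vanishes. The norm of $\Phi_{\bm v}$ contains the factor $\Phi_{\bm v}(P_1,-P_2)$, which vanishes as soon as $x([v_1]P_1-[v_2]P_2)=x([2v_1]P_1)=0$. This is a finite but in general non-empty set of points on a \emph{smooth} fibre.

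So your eliminant has the shape $\pm X^{L_2}\Delta^{L_1}g$, with $X=x_2-x_1$ and a nontrivial cofactor $g$. Nothing in your plan prevents $\p\mid g(X)$ at the given $\bm P$, and then you only get $\Min(\cdot)\leq L_1\nu(\Delta)+\nu(g(X))$.

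The paper's key idea is a device to neutralise this cofactor:
\begin{enumerate}
\item keep $X$ as a free variable, which is a $\p$-unit because $P_1\not\equiv\pm P_2$;
\item replace $\Phi^*_{\bm v}$ by $\Phi^*_{\bm v}-t\Psi^{*2}_{\bm v}$ for an auxiliary algebraic integer $t$;
\item show the cofactor $g_t$ is monic in $X$;
\item count solutions of $[v_1]\overline{P_1}\pm[v_2]\overline{P_2}=\overline{O}$, $x([v_1]\overline{P_1}\mp[v_2]\overline{P_2})=\overline{t}$ over $\overline{\F_{\p}}$ to pick $t$ with $g_t(X)$ a $\p$-unit.
\end{enumerate}
Since $N(\Phi-t\Psi^2)=N(\Phi)-t\,\mathrm{Tr}(\Phi\,\sigma(\Psi^2))+t^2N(\Psi^2)$, the parameter $t$ is a way of re-injecting the relative-sign information that your coordinate $x_3$ records. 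Your fallback of specialising via $P_1\pm P_2\not\equiv O$ only removes the factor $X^{L_2}$.

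Several supporting claims are also off.
\begin{itemize}
\item Your diagnosis of the obstacle is incorrect. On a smooth fibre with appropriate $\bm P$, $\Psi_{\bm v}(\bm P)=0$ forces $\Phi_{\bm v}(\bm P)=-\Psi_{\bm v+\bm e_i}(\bm P)\Psi_{\bm v-\bm e_i}(\bm P)\neq0$. So there is no positive-dimensional common zero locus away from $\Delta=0$; the real obstructions are the extraneous loci just described.
\item The exponent count does not work as stated. After $\Res_{x_1}$ you hold a single polynomial, so eliminating $x_2$ needs a second one. Two successive eliminations multiply degrees twice, so your weight argument gives an exponent of order $|\bm v|^8$. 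The quartic bound comes from a \emph{single} resultant in $x_2$ of two polynomials of degree $O(|\bm v|^2)$, with the $y_i$ removed by norms of fixed degree.
\item Neither ``candidate route'' reaches the core case. Writing $\bm v=d\bm v'$ only reduces to primitive $\bm v'$. For primitive $\bm v$, the zero locus $[v_1]P_1+[v_2]P_2=O$ is not a union of torsion conditions on $P_1,P_2,P_1\pm P_2$, so $\Psi_{\bm v}$ is not a product of division polynomials at those points.
\item A ``bounded-valuation factor'' in your B\'ezout identity would spoil the stated bound. Working over $\Z$, you must also show the integer constant is $\pm1$, as the paper does by reducing modulo each prime.
\end{itemize}
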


\begin{remark}
The assumptions $P_i \not\equiv O \bmod \p$ and $P_i\pm P_j \not\equiv O \bmod \p$ for $1\leq i < j \leq n$ in Akbary's result are used to show that that for all $(v_1, v_2)\in \Z^2$, we have $\nu\left(\Phi_{(v_1, v_2)}(\bm{P})\right), \nu\left(\Phi_{(v_1, v_2)}(\bm{P})\right)\geq 0$ (this is \cite[Proposition 1.6]{Akbary}), therefore the common valuation/cancellation exponent considered in \Cref{Akbary TFAE} is well-defined. For this reason, it is necessary for us to retain a similar technical assumption here. In particular, $P_1\equiv \pm P_2 \bmod \p$ is a case we cannot handle with our method. See \Cref{X=0 impossible} for more details.  
\end{remark}

As in the case of division polynomials, it is expected that such upper bound can be used to tackle problems related to $\Z$--linear combination of integral points $[n]P+[m]Q$. For instance, the author has also proved an explicit formula for the cancellation exponent of net polynomials associated to elliptic curves with complex multiplication in \cite{AY}, which is a prototype for $\Z$--linear combination of integral points. Therefore, producing analogous integral point results using net polynomials, specialised to elliptic curves with complex multiplication, should be within reach.

The paper is organised as follows. After giving some notations, we will give a review of basic properties and definitions of net polynomials, elliptic nets and resultants of polynomials in Section 2. In Section 3, we will give a criterion for when the net polynomials can vanish simultaneously. After discussing some technicalities of our method in Section 4, we will finally prove our main result in Section 5. 

\subsection{Notation}\label{Notations}
Throughout the paper, unless otherwise specified, we will use the following notations.
\begin{table}[h!]
\begin{tabular}{cc}
$K$           & Arbitrary fields       \\
$\OO_{K}$     & The ring of integers of a number field $K$   \\
$\Delta$ 	&The discriminant of a cubic Weierstrass curve $E$ \\
$\nu$	&               A normalised discrete valuation\\ 
\end{tabular}
\end{table}

%% file: Review.tex
\section{Preliminaries}
In this section, we will collect some basic facts about net polynomials and resultants, so the paper is self--contained. 

\subsection{Elliptic net and net polynomials}
An \emph{elliptic divisibility sequence} (EDS) is a sequence $(W_n)$ that takes values in an integral domain and satisfies the recursion 
\[
W_{n+m}W_{n-m}W_{r}^2=W_{n+r}W_{n-r}W_{m}^2-W_{m+r}W_{m-r}W_{n}^2
\]
for any integers $n, m, r$. In \cite{Stange}, Stange generalised the concept of an elliptic sequence to an $n$--dimension array, also known as an \emph{elliptic net}, which extends the recurrence relation above to `a higher rank'. 

\begin{defn}[Elliptic net]\label{elliptic net}
Let $A$ be a free finitely-generated abelian group and $R$ be an integral domain. An \emph{elliptic net} is any map $W \colon A \rightarrow R$ with $W(\bm{0})=0$ and, for any $\bm{p}, \bm{q}, \bm{r}, \bm{s} \in A$,
\begin{multline*}
W(\bm{p}+\bm{q}+\bm{s})W(\bm{p}-\bm{q})W(\bm{r}+\bm{s})W(\bm{r}) \\
+ W(\bm{q}+\bm{r}+\bm{s})W(\bm{q}-\bm{r})W(\bm{p}+\bm{s})W(\bm{p}) \\ 
+ W(\bm{r}+\bm{p}+\bm{s})W(\bm{r}-\bm{p})W(\bm{q}+\bm{s})W(\bm{q})=0.
\end{multline*} \label{elliptic net def}
We call the rank of \(W\) the rank of the elliptic net. 
\end{defn}

This is indeed a generalisation of EDS: note that for an elliptic net $W: A \rightarrow R$, if we take $A=\Z$, $\bm{p}=m, \bm{q}=n, \bm{r}=r, \bm{s}=0$, then $W$ is an EDS by definition since $W$ is an \emph{odd} function.

\begin{lemma}\cite[Proposition 2.1]{Stange} \label{odd net}
Let $A$ be a free abelian group of finite rank, and $R$ be an integral domain. Let $W\colon A \rightarrow R$ be an elliptic net. Then for any $\bm{v} \in A, W(-\bm{v})=-W(\bm{v})$. In other words, an elliptic net is an odd function. 
\end{lemma}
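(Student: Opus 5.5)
The plan is to specialise the defining identity of \Cref{elliptic net} so that the third term vanishes because of the condition $W(\bm{0})=0$. The surviving terms should then factor as a product times $W(\bm{r})+W(-\bm{r})$. Since $R$ is an integral domain, it will then be enough to ensure that the other factors are nonzero.

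First, if $W$ is identically zero there is nothing to prove, so assume there is some $\bm{u}\in A$ with $W(\bm{u})\neq 0$. Now put $\bm{q}=\bm{0}$ in the defining relation, keeping $\bm{p},\bm{r},\bm{s}$ arbitrary. The three terms become:
\begin{itemize}
\item the first term, $W(\bm{p}+\bm{s})W(\bm{p})W(\bm{r}+\bm{s})W(\bm{r})$;
\item the second term, $W(\bm{r}+\bm{s})W(-\bm{r})W(\bm{p}+\bm{s})W(\bm{p})$;
\item the third term, $W(\bm{r}+\bm{p}+\bm{s})W(\bm{r}-\bm{p})W(\bm{s})W(\bm{0})$, which is $0$.
\end{itemize}
Hence
\[
W(\bm{p}+\bm{s})\,W(\bm{p})\,W(\bm{r}+\bm{s})\,\bigl(W(\bm{r})+W(-\bm{r})\bigr)=0 .
\]
Next take $\bm{s}=\bm{0}$, $\bm{p}=\bm{u}$ and $\bm{r}=\bm{v}$. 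This gives
\[
W(\bm{u})^2\,W(\bm{v})\,\bigl(W(\bm{v})+W(-\bm{v})\bigr)=0 .
\]
Since $R$ is a domain and $W(\bm{u})\neq 0$, we conclude: whenever $W(\bm{v})\neq 0$, we have $W(-\bm{v})=-W(\bm{v})$.

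The remaining step, which is the only slightly delicate one, is the case $W(\bm{v})=0$, where we must show $W(-\bm{v})=0$. Suppose instead that $W(-\bm{v})\neq 0$. Applying the previous conclusion to $-\bm{v}$ in place of $\bm{v}$ gives $W(\bm{v})=-W(-\bm{v})\neq 0$, a contradiction. Hence $W(-\bm{v})=0=-W(\bm{v})$.

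Together the two cases give $W(-\bm{v})=-W(\bm{v})$ for every $\bm{v}\in A$.
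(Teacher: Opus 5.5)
Your proof is correct. It is essentially the standard argument behind Stange's Proposition 2.1, which the paper cites rather than reproves: specialise the recurrence so that a factor $W(\bm 0)$ kills one term, cancel nonzero factors in the domain $R$ to get oddness wherever $W(\bm v)\neq 0$, and handle the zero case via the symmetry $\bm v\leftrightarrow -\bm v$. Taking $\bm p=\bm r=\bm v$ gives $W(\bm v)^3\bigl(W(\bm v)+W(-\bm v)\bigr)=0$ directly, which removes the need for the auxiliary $\bm u$ and the separate case $W\equiv 0$.
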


Similar to division polynomials, we can relate elliptic nets to elliptic curves. In \cite{Stange}, Stange has shown that there is a collection of rational functions on elliptic curves over arbitrary fields that satisfy the definition of elliptic nets and are used to describe the collection of denominators of linear combinations of points, $\sum_{i=1}^{n}[m_{i}]P_{i}$, on an elliptic curve, which means we are taking $A=\Z^{n}$ in the definition of an elliptic net. As we have defined in \Cref{net poly def}, these rational functions are known as net polynomials. 

The following properties of net polynomials will be useful to take note of in this paper. We refer the readers to \cite[Lemma 2.5, Lemma 2.6, Theorem 2.8, Proposition 3.8]{Stange} and \cite[Lemma 2.5]{Akbary} for the proof. 
\begin{prop}[Properties of net polynomials]\label{net poly}
The net polynomials satisfy the following properties:
\begin{enumerate}[(a)]
\item $\Psi_{\bm{v}}$ is an elliptic net, so the recurrence in \Cref{elliptic net} holds;
\item For all $\bm{v} \in \mathbb{Z}^{n}$, the values of the $\bm{v}$-th net polynomial are completely defined by the following initial conditions:
\begin{itemize}
\item $\Psi_{\bm{0}} = 0$;
\item $\Psi_{\bm{e}_i} = 1$ for $1\leq i \leq r$;
\item $\Psi_{2\bm{e}_i} = 2y_i + a_{1}x_i + a_{3} = \psi_{2}(P_i)$;
\item $\Psi_{\bm{e}_i + \bm{e}_j} = 1$, $i \neq j$;
\item $\Psi_{2\bm{e}_i + \bm{e}_j} = 2x_i + x_j - \left(\frac{y_j - y_i}{x_j - x_i}\right)^{2}
      - a_{1}\left(\frac{y_j - y_i}{x_j - x_i}\right) + a_{2}$, $i \neq j$.
\end{itemize}
\label{a values}
\item For $1\leq i \leq r$, we have $\Psi_{n\bm{e}_{i}}(\bm{P})=\psi_{n}(P_i)$.
\item For any $\bm{v}, \bm{u} \in \mathbb{Z}^{r}$,
\begin{equation}
\Psi_{\bm{v}}^{2}\Psi_{\bm{u}}^{2}\left(x\left(\bm{v} \cdot \bm{P}\right)
- x\left(\bm{u} \cdot \bm{P}\right)\right)
= -\Psi_{\bm{v} + \bm{u}}\Psi_{\bm{v} - \bm{u}},
\end{equation}
\label{notation}
where $x(\bm{v} \cdot \bm{P})$ represents the $x$-coordinate of the point $\bm{v} \cdot \bm{P}$.

\item Using the notations in \ref{notation}, for $1 \leq i \leq r$, substitute $\bm{u} = \bm{e}_i$ in \ref{notation}; then we have
\begin{equation}
\Phi_{\bm{v}}(\bm{P})
= \Psi_{\bm{v}}^{2}(\bm{P})\,x(P_i)
- \Psi_{\bm{v} + \bm{e}_i}(\bm{P})\Psi_{\bm{v} - \bm{e}_i}(\bm{P}).
\end{equation}
\label{net poly b}
\end{enumerate}
\end{prop}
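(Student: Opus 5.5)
The plan is to follow Stange's strategy: construct the net polynomials analytically over $\C$, verify every identity there, and then transfer the identities to an arbitrary field by a specialisation argument.

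\textbf{Step 1: analytic construction.} Over $\C$, write $E\cong\C/\Lambda$ with Weierstrass $\sigma$-function, and let $\bm{z}=(z_1,\dots,z_n)$ correspond to $\bm{P}$. I would define
\[
\Psi_{\bm{v}}(\bm{z})=\frac{\sigma(v_1z_1+\dots+v_nz_n)}{\prod_{i}\sigma(z_i)^{2v_i^2-\sum_{j}v_iv_j}\prod_{i<j}\sigma(z_i+z_j)^{v_iv_j}} .
\]
The exponents are chosen so that the quasi-periodicity factors of $\sigma$ cancel. One checks that the total exponent data $Q(\bm{v})$ is a quadratic form in $\bm{v}$ with $Q(\bm{e}_i)=Q(\bm{e}_i+\bm{e}_j)=1$ in the appropriate normalisation. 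Hence $\Psi_{\bm{v}}$ is an elliptic function of each $z_i$, and so a rational function on $E^n$.

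\textbf{Step 2: the identities over $\C$.} With this construction, the properties follow in order.
\begin{itemize}
\item[(a)] The elliptic net recurrence is the classical three-term $\sigma$ identity
\[
\sigma(a+b)\sigma(a-b)\sigma(c+d)\sigma(c-d)+\text{cyclic}=0,
\]
applied with $a,b,c,d$ linear in $\bm{z}$. The denominators match across the three terms because $Q$ is quadratic, so the parallelogram-type identities for $Q$ hold.
\item[(c)] Setting $v_j=0$ for $j\neq i$ gives $\sigma(nz_i)/\sigma(z_i)^{n^2}=\psi_n(P_i)$, the classical formula.
\item[(d)] This is the identity
\[
\wp(u)-\wp(w)=-\frac{\sigma(u+w)\sigma(u-w)}{\sigma(u)^2\sigma(w)^2}
\]
with $u=\bm{v}\cdot\bm{z}$ and $w=\bm{u}\cdot\bm{z}$, together with $Q(\bm{v}+\bm{u})+Q(\bm{v}-\bm{u})=2Q(\bm{v})+2Q(\bm{u})$, which makes the denominators agree.
\item[(e)] Put $\bm{u}=\bm{e}_i$ in (d) and use $\Psi_{\bm{e}_i}=1$. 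Then define $\Phi_{\bm{v}}=\Psi_{\bm{v}}^2\,x(\bm{v}\cdot\bm{P})$, consistent with (\ref{notation}).
\item[(b)] The initial values come from direct evaluation. $\Psi_{\bm{0}}=0$ and $\Psi_{\bm{e}_i}=\Psi_{\bm{e}_i+\bm{e}_j}=1$ hold by the choice of exponents, and $\Psi_{2\bm{e}_i}=\psi_2(P_i)$. For $\Psi_{2\bm{e}_i+\bm{e}_j}$, apply (d) with $\bm{v}=\bm{e}_i+\bm{e}_j$ and $\bm{u}=\bm{e}_i$, and express $x(P_i+P_j)$ by the chord formula, which yields the stated expression.
\end{itemize}
That these values determine the whole net follows by an induction on $\|\bm{v}\|$. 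Using instances of the recurrence with $\bm{s}=\bm{0}$ and suitable $\bm{p},\bm{q},\bm{r}$, each $\Psi_{\bm{v}}$ is expressed through terms of smaller norm, with division only by the values $\Psi_{\bm{e}_i}$ and $\Psi_{\bm{e}_i+\bm{e}_j}$, which equal $1$. This is the standard EDS-type doubling argument in Stange's Section~2.

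\textbf{Step 3: passage to an arbitrary field.} Take the recursion from (b) as the \emph{definition} over $R_n$. Step 2 shows that, over $\Z[A,B]$ adjoined the generic point coordinates, these elements satisfy (a), (c), (d) and (e). They do so as identities of rational functions on the generic curve: they hold on a Zariski-dense set of complex specialisations, hence identically. Since all denominators are powers of $x_i-x_j$, the identities survive reduction to any field $K$, including small characteristic, and so hold in $R_n$.

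The main obstacle is this last step, together with the determinacy induction. One must check that the recursive construction never divides by anything other than $1$ or $x_i-x_j$, so that $\Psi_{\bm{v}}$ really lies in the integral version of $R_n$. One must also check that the complex identity is polynomial in $A$, $B$ and the coordinates, so that specialisation is legitimate. I would first try to prove integrality directly from the recurrence, as Stange does.
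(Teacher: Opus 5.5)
The paper gives no proof of its own here. It imports all five items from Stange (and Akbary et al.), and your Steps 1--3 faithfully reconstruct Stange's argument: the $\sigma$-function net, the three-term $\sigma$ identity for (a), the formula $\wp(u)-\wp(w)=-\sigma(u+w)\sigma(u-w)/(\sigma(u)^2\sigma(w)^2)$ with the parallelogram law for the exponent form for (d) and (e), direct evaluation of the initial values, and transfer to an arbitrary $K$ through the universal ring. Those parts are sound.

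\textbf{The gap.} The determinacy argument in (b) fails as written, and Step 3 reuses it as a recursive definition over $R_n$.
\begin{itemize}
\item \emph{Even vectors need a non-unit division.} Instances with $\bm s=\bm 0$ cannot express every $\Psi_{\bm v}$ through smaller values while dividing only by $\Psi_{\bm e_i}=\Psi_{\bm e_i+\bm e_j}=1$. In a term $\Psi_{\bm p+\bm q}\Psi_{\bm p-\bm q}\Psi_{\bm r}^2$ the vectors $\bm p\pm\bm q$ are congruent mod $2$. So for $\bm v\in 2\mathbb{Z}^n$, the factor multiplying $\Psi_{\bm v}$ contains a value at a nonzero vector of $2\mathbb{Z}^n$, such as $\Psi_{2\bm e_i}=\psi_2(P_i)$, and these are not units in $R_n$.
\item \emph{Rank $1$.} Already here you only obtain $\psi_2\psi_{2m}$. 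Moreover, $\psi_3$ and $\psi_4$ are not determined by $\psi_1,\psi_2$ at all, so the listed data can only suffice for $n\ge 2$.
\item \emph{Small vectors.} Norm reduction breaks down near the origin. For $\bm v=(2,2)$, the division-free doubling identity with $\bm s\neq\bm 0$ (Case III in Section 3) collapses to $\Psi_{(2,2)}=\Psi_{(2,2)}$. One has to use $\Psi_{(1,-1)}\Psi_{(2,2)}=\Psi_{(0,2)}\Psi_{(2,1)}-\Psi_{(2,0)}\Psi_{(1,2)}$, i.e.\ divide by $\Psi_{\bm e_1-\bm e_2}=\pm(x_1-x_2)$.
\end{itemize}

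\textbf{The fix.} For $n=2$ the working mechanism is \cite[Lemma 2.4]{Stange}. The three doubling recurrences used in Section 3 (the even--even one being an instance with $\bm s\neq\bm 0$) are division-free and strictly decrease $\lVert\bm v\rVert_\infty$ once it exceeds $4$. The finitely many values with $\lVert\bm v\rVert_\infty\le 4$ then come from the listed ones, with the only division being by $\Psi_{\bm e_1-\bm e_2}$, a unit of $R_2$. For uniqueness in (b), it is enough that every divisor is nonzero in the domain $R_n$. Step 3 needs no recursive definition at all: the paper's definition of net polynomials already takes from Stange that they lie in $R_n$ with only powers of $x_i-x_j$ in the denominators, so your specialisation argument applies to them directly.
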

\vspace{-1.5em}
\begin{remark}
When we say net polynomials are defined completely by the initial conditions given in \Cref{net poly}\ref{a values}, we mean by choosing appropriate initial values and substituting them into the definition of an elliptic net, one can obtain some other values, and hence define all values recursively. One can refer to \cite[Theorem 2.5]{Stange} for examples. 
\end{remark}

For the purpose of our paper, instead of just elliptic curves, we wish to define elliptic nets from any cubic Weierstrass curves. We will outline the relevant statements below, readers may refer to \cite[Section 5]{Stange} for details. 

\begin{defn}\label{appropriate}
Let $E/K$ be a cubic curve defined by (\ref{short EC}), not necessarily smooth. A set of points $\{P_1, \ldots, P_n\}$ on the non--singular part of $E$ is called \emph{appropriate} if the following hold:
\begin{enumerate}
\item $P_i\neq O$ for all $i=1, \ldots, n$;
\item $[2]P_i \neq O$ for all $i=1, \ldots, n$;
\item $P_i \neq \pm P_j$ for any $i \neq j$; and 
\item $[3]P_1\neq O$ whenever $n=1$.
\end{enumerate}

If we have an appropriate $n$--tuple of points $\bm{P}$ on the non--singular part of $C/K$, then we may define a map 
\[
\Psi_{\bm{v}}(\bm{P})\colon \Z^n \rightarrow K
\]
known as the elliptic net associated to $C$ and $\bm{P}$, which is an elliptic net. 
\end{defn}

\begin{prop}\cite[Corollary 5.2]{Stange}\label{net polynomial vanish}
For an elliptic net $\Psi_{\bm{v}}(\bm{P}) \colon \Z^n \rightarrow K$ associated to a cubic Weierstrass curve $E$ and appropriate points $\bm{P}$ on the non--singular part of $E/K$, we have $\Psi_{\bm{v}}(\bm{P})=0$ if and only if $\bm{v}\cdot \bm{P}=O$ on $E_{\text{ns}}$, where $E_{\text{ns}}$ denotes the set/group of non--singular points of $C$ defined over $K$ and $\bm{v}\cdot \bm{P}$ is as defined in \Cref{net poly def}. 
\end{prop}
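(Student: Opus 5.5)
The plan is to reduce to an explicit closed form for $\Psi_{\bm v}(\bm P)$ in each geometric case, and then read off its zeros. We may extend scalars to $\bar K$, since vanishing of $\Psi_{\bm v}(\bm P)$ and the condition $\bm v\cdot\bm P=O$ are both insensitive to this.

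The key tool is the uniqueness statement in \Cref{net poly}\ref{a values}. An elliptic net is determined by its values at $\bm 0$, $\bm e_i$, $2\bm e_i$, $\bm e_i+\bm e_j$ and $2\bm e_i+\bm e_j$. Hence any function $W\colon \Z^n\to \bar K$ that satisfies the recurrence of \Cref{elliptic net} and matches these initial values must equal $\Psi_{\bm v}(\bm P)$. The appropriateness conditions of \Cref{appropriate} are what make those initial values finite and nonzero. They also rule out the rank-one degeneracy: $[2]P_i\neq O$, $[3]P_1\ne O$ and $P_i\neq\pm P_j$ ensure the recurrence genuinely determines all values.

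The curve $E$ then falls into three cases.

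\emph{Smooth case.} Here one uses the divisor description of Stange. As a function on $E^n$, the net polynomial has zero divisor exactly the subvariety $\{\bm v\cdot\bm P=O\}$, with poles supported only on $\{P_i=O\}$ and $\{P_i=\pm P_j\}$. This is the algebraic shadow of the complex formula
\[
\Psi_{\bm v}=\frac{\sigma(v_1z_1+\dots+v_nz_n)}{\prod_i\sigma(z_i)^{2v_i^2-\sum_j v_iv_j}\prod_{i<j}\sigma(z_i+z_j)^{v_iv_j}}.
\]
At an appropriate $\bm P$ we avoid the polar locus, so the value is zero exactly when $\bm v\cdot\bm P=O$.

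\emph{Nodal case.} Uniformise $E_{\mathrm{ns}}(\bar K)\cong \bar K^{*}$ via a parameter $t$, with $P_i\mapsto t_i$. The candidate net is the degenerate sigma formula
\[
W(\bm v)=\frac{\omega\bigl(\prod_i t_i^{v_i}\bigr)}{\prod_i\omega(t_i)^{2v_i^2-\sum_j v_iv_j}\prod_{i<j}\omega(t_it_j)^{v_iv_j}},\qquad \omega(t)=t^{1/2}-t^{-1/2},
\]
up to a normalising constant. The steps are: verify that $W$ satisfies the net recurrence, which is an identity of trigonometric type in the $\omega$'s; check the initial values against \Cref{net poly}\ref{a values}; and conclude $W=\Psi$. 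Since $\omega(s)=0$ if and only if $s=1$, we get $\Psi_{\bm v}(\bm P)=0$ if and only if $\prod_i t_i^{v_i}=1$, that is, $\bm v\cdot\bm P=O$.

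\emph{Cuspidal case.} Use $E_{\mathrm{ns}}\cong\mathbb G_a$ via $s=x/y$, and run the same argument with $\omega(s)=s$. The numerator then becomes $\sum_i v_is_i$, which vanishes exactly when $\bm v\cdot\bm P=O$.

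The main obstacle is the verification that these closed forms satisfy the recurrence of \Cref{elliptic net} and have the right initial values. For the numerator this is the classical three-term identity for $\omega$. For the quadratic-form denominators, the recurrence is homogeneous, and one checks that the exponent form $\bm v\mapsto\sum_i v_i^2 -\dots$ contributes equal factors to all three terms; this is where the specific exponents are forced. Matching $\Psi_{2\bm e_i+\bm e_j}$ requires an explicit computation of the chord slope in the parameter $t$.

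An alternative route, which avoids explicit formulas, uses the identity $x(\bm v\cdot\bm P)=\Phi_{\bm v}/\Psi_{\bm v}^2$ together with \Cref{net poly}\ref{net poly b}. It suffices to show that $\Psi_{\bm v}$ and $\Phi_{\bm v}$ never vanish simultaneously at nonsingular appropriate points. For that I would propagate simultaneous zeros through the recurrence down to some $\Psi_{\bm e_i}=1$, a contradiction. I would try the explicit-formula approach first, since it handles both directions at once.
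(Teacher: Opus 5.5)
The paper gives no proof of this statement. It is imported from \cite[Corollary 5.2]{Stange} and used as a black box: in \Cref{zero condition}, and in the proposition after \Cref{singular vanish}, which in fact \emph{deduces} from it that $\Psi_{\bm v}$ and $\Phi_{\bm v}$ do not vanish together. So your argument has to stand on its own, and in outline it is correct.

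\textbf{The singular cases.} The mechanism works. The map $\bm v\mapsto\omega\bigl(\prod_i t_i^{v_i}\bigr)$ is an elliptic net by the three-term identity. Multiplying a net by $\prod_k\beta_k^{Q_k(\bm v)}$, with quadratic forms $Q_k$, rescales all three terms of the recurrence by the same factor. Any quadratic exponents would do that; your specific exponents are forced by the normalisation $W(\bm e_i)=W(\bm e_i+\bm e_j)=1$, not by the recurrence.

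\textbf{Comparison with the citation.} Your route buys explicit closed forms on $E_{\mathrm{ns}}$. For instance, they make \Cref{singular vanish} almost immediate. Let $P_1$ tend to the node along a branch ($t_1\to0$) with $v_1\neq 0$. To leading order the closed form is a nonzero multiple of $t_1^{(v_1^2-|v_1|)/2}$. For $v_1=1$ its limit is $(x_2-x_*)^{(v_2^2-v_2)/2}$, which matches the paper's $\Psi_{(1,2)}(P_*,P_2)=\Psi_{(1,-1)}(P_*,P_2)=x_2-x_*$. The citation, by contrast, costs nothing and covers every characteristic, whereas your smooth case still leans on Stange.

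\textbf{Points that need repair.}

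(i) ``Up to a normalising constant'' cannot mean a global factor, because $W(\bm e_i)=1$ is forced. The real freedom is $\omega\mapsto c\,\omega$. This multiplies $W(\bm v)$ by $c^{1-Q(\bm v)}$ with $Q(\bm v)=\sum_i v_i^2-\sum_{i<j}v_iv_j$, and $c$ is fixed by $W(2\bm e_i)=2y_i$. Explicitly, for $y^2=(x-x_*)^2(x+2x_*)$ take $a^2=3x_*$, $u=y/(x-x_*)$, $t=(u-a)/(u+a)$ and $\tilde\omega=-\frac{1}{2a}\omega$. Then $\tilde\omega(t)^{-2}=x-x_*$ and $W(2\bm e_i)=2y_i$. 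The identity $\tilde\omega(U)^2-\tilde\omega(V)^2=\tilde\omega(UV)\tilde\omega(U/V)$ with $U=t_it_j$, $V=t_i$ gives $W(2\bm e_i+\bm e_j)=x_i-x(P_i+P_j)=\Psi_{2\bm e_i+\bm e_j}$, so no chord-slope computation is needed. In the cuspidal case ($x=s^{-2}$, $y=s^{-3}$), $\omega(s)=s$ works as is.

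(ii) Fix $s_i$ with $s_i^2=t_i$ and read every $\omega(\cdot)$ through the $s_i$. Changing $s_i$ to $-s_i$ multiplies numerator and denominator by $(-1)^{v_i}$, so $W$ is well defined.

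(iii) For $n=1$ the list in \Cref{net poly}(b) is not complete initial data: $W(3)$ and $W(4)$ must also be matched (they do match). Also, appropriateness does not make all initial values nonzero: $\Psi_{2\bm e_i+\bm e_j}(\bm P)=0$ when $[2]P_i+P_j=O$. Check that the uniqueness theorem you invoke only divides by values appropriateness controls, such as those at $\bm e_i$, $2\bm e_i$ and $\bm e_i\pm\bm e_j$.

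(iv) Your smooth-case divisor is wrong as stated. The exponents $2v_i^2-\sum_j v_iv_j$ and $v_iv_j$ can be negative. So $\Psi_{(1,-1)}=x_2-x_1$ also vanishes on $P_1=-P_2$, and $\Psi_{(1,2)}=x_2-x(P_1+P_2)$ vanishes on $P_1=O$. What holds, and all you use, is $\operatorname{div}\Psi_{\bm v}=[\bm v\cdot\bm P=O]+D$ with $D$ supported on $\{P_i=O\}\cup\{P_i=\pm P_j\}$. In characteristic $0$ you can bypass divisors: read the zeros directly off the $\sigma$-quotient, since $\sigma(z_i)$ and $\sigma(z_i+z_j)$ are nonzero at appropriate points, and then apply the Lefschetz principle. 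In positive characteristic you need a divisor computation valid over arbitrary fields, not just the analytic picture.

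\textbf{The fallback route.} Do not count on it. The paper's logic runs the other way, so you cannot borrow non-simultaneous vanishing from it. Moreover, propagating zeros through the recurrence alone cannot succeed unless nondegeneracy is used essentially. For example, $W(\pm1)=\pm1$ with $W(m)=0$ otherwise is a rank-one elliptic net with consecutive zeros.
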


\subsection{Resultants}
For the convenience of readers, we recall some basic facts and properties of resultants of polynomials. Throughout this section, let $f, g \in K[x]$ be polynomials of positive degree, in the form 
\begin{align*}
f(x)&=a_0x^n+\ldots+a_n, a_0 \neq 0, \\
g(x)&=b_0x^m+\ldots + b_m, b_0 \neq 0.
\end{align*}

\begin{defn}
The Sylvester matrix of $f$ and $g$ with respect to $x$, denoted $\Syl_{x}(f, g)$ is the following $(m+n)\times (m+n)$ matrix:
\[
\Syl_{x}(f, g)=
\begin{pmatrix}
a_0 & 0 & \cdots & 0 & b_0 & 0 & \cdots & 0\\
a_1 & a_0 & \cdots & 0 & b_1 & b_0 & \cdots & 0\\
a_2 & a_1 & \ddots & 0 & b_2 & b_1 & \ddots & 0\\
\vdots & \vdots & \ddots & a_0 & \vdots & \vdots & \ddots & b_0\\
a_n & a_{n-1} & \cdots & a_0 & b_m & b_{m-1} & \cdots & b_0\\
0 & a_n & \cdots & a_1 & 0 & b_m & \cdots & b_1\\
\vdots & \vdots & \ddots & \vdots & \vdots & \vdots & \ddots & \vdots\\
0 & 0 & \cdots & a_n & 0 & 0 & \cdots & b_m
\end{pmatrix}.
\]
The resultant of $f$ and $g$, with respect to $x$, denoted $\Res_{x}(f, g)$, is the determinant of the Sylvester matrix. Thus, 
\[
\Res_{x}(f, g)=\Det\left(\Syl_{x}(f, g)\right).
\]
\end{defn}

\begin{prop}
Let $f(x)$ and $g(x)$ be polynomials in $K[x]$ of positive degree. 
\begin{enumerate}
\item The resultant $\Res_{x}(f, g)$ is an integer polynomial in the coefficients of $f$ and $g$. Furthermore, $f$ and $g$ have a common factor in $K[x]$ if and only if $\Res_{x}(f, g)=0$. 
\item There are polynomials $A, B \in K[x]$ such that 
\[
Af+Bg=\Res_{x}(f, g).
\]
Furthermore, the coefficients of $A$ and $B$ are integer polynomials in the coefficients of $f$ and $g$. 
\end{enumerate}
\end{prop}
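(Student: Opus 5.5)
The plan is to prove both parts directly from the Sylvester matrix $\Syl_{x}(f, g)$. Part (1) splits into integrality and the common-factor criterion. Integrality is immediate: $\Res_{x}(f,g)=\Det(\Syl_{x}(f,g))$, and the Leibniz expansion of a determinant is a sum of signed products of entries. Every entry is $0$, some $a_i$, or some $b_j$, so the resultant is a polynomial with integer coefficients in $a_0,\dots,a_n,b_0,\dots,b_m$.

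For the common-factor criterion, I will use the linear map
\[
L\colon K[x]_{<m}\times K[x]_{<n}\to K[x]_{<m+n},\qquad (A,B)\mapsto Af+Bg,
\]
where $K[x]_{<k}$ denotes polynomials of degree less than $k$. In the monomial bases, and with a suitable ordering of coordinates, the matrix of $L$ is $\Syl_{x}(f,g)$ or its transpose. Hence $\Res_{x}(f,g)=0$ exactly when $L$ has a nontrivial kernel, i.e.\ when there is a pair $(A,B)\neq(0,0)$ with $\deg A<m$, $\deg B<n$ and $Af=-Bg$.

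I then prove both directions of the equivalence.
\begin{enumerate}
\item If $h$ is a common factor of positive degree, write $f=hf_1$ and $g=hg_1$, and take $A=g_1$, $B=-f_1$. These satisfy the degree bounds and give $Af+Bg=0$, so the kernel is nontrivial.
\item Conversely, suppose $Af=-Bg$ with $(A,B)\neq(0,0)$ and $f$, $g$ coprime. Then $f$ divides $Bg$, so by unique factorisation in $K[x]$ it divides $B$. Since $\deg B<n=\deg f$, this forces $B=0$, then $A=0$, a contradiction.
\end{enumerate}
The step that needs the most care is checking that the Sylvester matrix really is the matrix of $L$. This is a bookkeeping matter: the columns of $\Syl_{x}(f,g)$ are the coefficient vectors of $x^{m-1}f,\dots,f,\,x^{n-1}g,\dots,g$.

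For part (2), I will use the adjugate (Cramer's rule). Let $S=\Syl_{x}(f,g)$. Consider the row vector $(x^{m+n-1},\dots,x,1)$. Multiplying it by $S$ gives the row vector
\[
(x^{m-1}f,\dots,f,\;x^{n-1}g,\dots,g).
\]
Right-multiply this identity by the last column of $\operatorname{adj}(S)$. Since $S\,\operatorname{adj}(S)=\Det(S)\,I$, the left side becomes $\Det(S)\cdot 1$, because only the final entry $1$ of the row vector survives. The right side becomes $Af+Bg$ with
\[
A=\sum_{i=0}^{m-1} c_i x^{m-1-i},\qquad B=\sum_{j=0}^{n-1} c_{m+j}\, x^{n-1-j},
\]
where the $c_k$ are the entries of that last column of $\operatorname{adj}(S)$. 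Each $c_k$ is a cofactor of $S$, so it is an integer polynomial in the $a_i,b_j$. This gives $Af+Bg=\Res_{x}(f,g)$ with the required integrality of the coefficients of $A$ and $B$.
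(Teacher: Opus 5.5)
Your proof is correct, and it is essentially the argument in Cox--Little--O'Shea (Ch.~3, \S 6), which the paper cites in place of a proof: $\Syl_{x}(f,g)$ is the matrix of $(A,B)\mapsto Af+Bg$ on pairs with $\deg A<m$, $\deg B<n$, which gives part (1), and Cramer's rule gives part (2). Taking the last column of the adjugate is a small improvement, since it gives $Af+Bg=\Res_{x}(f,g)$ as one universal identity without their case split on whether $\Res_{x}(f,g)=0$.
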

\begin{proof}
See \cite[Chapter 3, \S 6, Proposition 3, 5]{CoxAndLittle1}. 
\end{proof}

Below we quote several useful facts about resultants.

\begin{prop}\label{resultant properties}
Let $f(x)$ and $g(x)$ be polynomials in $K[x]$ of positive degree. 
\begin{enumerate}
\item  If $f(x)$ and $g(x)$ split into linear factors
\[
f(x)=a_0\prod_{i=1}^{n} (x-\lambda_i),\quad g(x)=b_0\prod_{j=1}^m(x-\mu_j),
\]
then
\[
\Res_{x}(f, g)=a_0^mb_0^m\prod_{i=1}^n\prod_{j=1}^m (\lambda_i-\mu_j)=a_0^m\prod_{i=1}^ng(\lambda_i)=(-1)^{mn}b_0^{n}\prod_{j=1}^{m}f(\mu_j).
\]
\item If $a_i$ is assigned with weight $n-i$ and $b_j$ is assigned with weight $m-j$, then $\Res_{x}(f, g)$ is homogenous of weight $mn$. 
\end{enumerate}
\end{prop}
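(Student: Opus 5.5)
The plan is to prove both parts first over the generic situation and then specialise. Let $\lambda_1,\dots,\lambda_n,\mu_1,\dots,\mu_m,a_0,b_0$ be independent indeterminates over $\Z$. Define $a_i=(-1)^ia_0e_i(\lambda)$ and $b_j=(-1)^jb_0e_j(\mu)$, where $e_i$ is the elementary symmetric function. Then $f=a_0\prod(x-\lambda_i)$ and $g=b_0\prod(x-\mu_j)$ hold over $\Z[a_0,b_0,\lambda,\mu]$. Because $\Res_x(f,g)$ is an integer polynomial in the coefficients, any identity proved in this generic ring specialises to every $f,g\in K[x]$ that split over $K$ (or over $\overline K$). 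So it suffices to work generically.

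For part (1), I would use the Bezout identity $Af+Bg=\Res_x(f,g)$ from the preceding proposition. Substitute $x=\lambda_i$ with $\mu_j=\lambda_i$. Then $f(\lambda_i)=g(\lambda_i)=0$, so $\Res_x(f,g)$ vanishes on the hyperplane $\lambda_i=\mu_j$. Hence, as a polynomial in the roots, it is divisible by each of the pairwise coprime irreducibles $\lambda_i-\mu_j$. This gives
\[
\Res_x(f,g)=c\,\prod_{i,j}(\lambda_i-\mu_j)
\]
for some $c\in\Z[a_0,b_0,\lambda,\mu]$.

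Next I compare degrees. Each $a_i/a_0$ has degree $i$ in the roots. Expanding the Sylvester determinant, every term is $a_0^mb_0^n$ times a product of total root-degree at most $mn$. So $c$ has root-degree $0$ and equals $a_0^mb_0^n$ times an integer. To fix the integer, specialise all $\mu_j=0$, so that $g=b_0x^m$. The Sylvester matrix is then block triangular, with determinant $b_0^n a_n^m=b_0^n a_0^m\prod(\lambda_i)^m\cdot(-1)^{nm}\cdot(-1)^{nm}$, up to a sign I will check against $\prod(\lambda_i-0)$. This gives $c=a_0^mb_0^n$.

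I note that the statement's first expression $a_0^mb_0^m$ should read $a_0^mb_0^n$; I will prove the corrected form. The other two expressions then follow from $g(\lambda_i)=b_0\prod_j(\lambda_i-\mu_j)$ and $f(\mu_j)=a_0\prod_i(\mu_j-\lambda_i)$, the latter contributing the sign $(-1)^{mn}$.

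For part (2), I would use the scaling $x\mapsto tx$. The polynomial $f(tx)$ has coefficients $t^{n-i}a_i$, which is exactly the weighting in the statement. Its leading coefficient is $t^na_0$ and its roots are $\lambda_i/t$; similarly for $g$. Applying part (1) gives
\[
\Res_x\bigl(f(tx),g(tx)\bigr)=(t^na_0)^m(t^mb_0)^n\,t^{-mn}\prod_{i,j}(\lambda_i-\mu_j)=t^{mn}\Res_x(f,g).
\]
Since this holds as an identity in the generic coefficients, $\Res_x$ is weighted-homogeneous of weight $mn$.

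The main obstacle is the bookkeeping in the degree comparison: checking that no Sylvester term exceeds root-degree $mn$ after factoring out $a_0^mb_0^n$. I would handle this via the weight-$i$ grading, using the same scaling argument with $t^nf(x/t)$. The sign normalisation in the specialisation $g=b_0x^m$ also needs care, but both steps are routine.
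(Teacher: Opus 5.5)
The paper does not prove this proposition itself; it refers to Exercises 7, 8 and 10 of Cox--Little--O'Shea, Chapter 3, \S1. Your proposal gives a self-contained proof along the classical lines, and it is correct apart from two points to tighten below. You pass to generic roots over $\Z[a_0,b_0,\lambda,\mu]$ and use the B\'ezout identity to get divisibility by each $\lambda_i-\mu_j$. You then remove the cofactor by a degree count, normalise by specialising to $g=b_0x^m$, and derive part (2) from $x\mapsto tx$. This buys an actual argument rather than a pointer. It also confirms the misprint you flagged: the first expression must read $a_0^mb_0^n\prod_{i,j}(\lambda_i-\mu_j)$. Already for $n=1$, $m=2$ one gets $a_0^2g(\lambda_1)=a_0^2b_0(\lambda_1-\mu_1)(\lambda_1-\mu_2)$. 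The paper only applies part (1) against the monic $f_i=y_i^2-\cdots$, via the last expression, so the misprint does not propagate.

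Two steps should be tightened.

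\emph{The degree bound.} In part (1) it must be proved directly on the Sylvester matrix, not by ``the same scaling argument'' as in your part (2). That argument invokes part (1) and would be circular here. The direct proof is short. Give $a_i$ weight $i$ and $b_j$ weight $j$; the Leibniz term indexed by $\sigma$ then has weight $\sum_k(\sigma(k)-k)+mn=mn$. Equivalently, $a_i\mapsto t^ia_i$, $b_j\mapsto t^jb_j$ replaces $\Syl_x(f,g)$ by $D_1\Syl_x(f,g)D_2$. Here $D_1=\mathrm{diag}(t,\dots,t^{m+n})$ and $D_2=\mathrm{diag}(t^{-1},\dots,t^{-m},t^{-1},\dots,t^{-n})$, so the determinant picks up exactly $t^{mn}$.

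Combine this with bihomogeneity (degree $m$ in the $a_i$, degree $n$ in the $b_j$). Then part (2) follows at once: under the weights $n-i$, $m-j$ every term has weight $mn+nm-mn=mn$. So you need not route part (2) through part (1) at all. That route would also oblige you to show that $a_0,a_0e_1(\lambda),\dots,a_0e_n(\lambda)$ are algebraically independent before concluding an identity in indeterminate coefficients.

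\emph{The sign in the normalisation.} The specialised Sylvester matrix has determinant $(-1)^{mn}b_0^na_n^m$, not $b_0^na_n^m$, because the two column blocks must be swapped to reach block-triangular form. With $a_n^m=(-1)^{mn}a_0^m\prod_i\lambda_i^m$ this equals $a_0^mb_0^n\prod_i\lambda_i^m$. That is the value you ended up writing, so $c=a_0^mb_0^n$ is right.
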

\begin{proof}
See \cite[Chapter 3, \S 1, Exercise 7, 8, 10]{CoxAndLittle2}
\end{proof}

%% file: Elliptic_nets_and_choice_of_points.tex
\section{Criteria for net polynomials to vanish simultaneously}
In this section, we want to prove the main criterion for when the net polynomials $\Phi_{\bm{v}}(\bm{P})$ and $\Psi_{\bm{v}}(\bm{P})$ can vanish simultaneously. To begin with, we first quote an immediate consequence of (\ref{Ayad resultant}) from \cite[Exercise 3.7(c)]{Silverman1}.

\begin{prop}[Silverman]
Let $\Delta$ be the discriminant of the cubic Weierstrass curve $E$, defined by the equation (\ref{short EC}). If $\Delta \neq 0$, $\phi_n(x)$ and $\psi_n(x)^2$ are relatively prime polynomials in $K[x]$. 
\end{prop}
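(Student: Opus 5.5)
The plan is to derive the statement directly from Ayad's resultant identity (\ref{Ayad resultant}), $\Res_x(\phi_n,\psi_n^2)=\Delta^{\frac16 n^2(n^2-1)}$, together with the characterisation of common roots of resultants recalled in Section 2. Write $\phi_n(x)$ and $\psi_n^2(x)$ as polynomials in $x$ alone, which is possible for $\psi_n^2$ after eliminating $y^2$ via $y^2=x^3+Ax+B$. The hypothesis $\Delta\neq 0$ makes the right-hand side of (\ref{Ayad resultant}) nonzero, so by part (1) of the first proposition on resultants, $\Res_x(\phi_n,\psi_n^2)\neq 0$ implies that $\phi_n$ and $\psi_n^2$ have no common factor in $K[x]$. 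This already gives the claim. To make the argument self-contained over $\bar K$, I would spell it out with \Cref{resultant properties}(1): if $\lambda\in\bar K$ were a common root, the factor $(\lambda_i-\mu_j)$ with $\lambda_i=\mu_j=\lambda$ would vanish, so the resultant would be $0$.

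If one prefers not to rely on the full strength of (\ref{Ayad resultant}), there is a geometric fallback. A common root $x_0$ of $\phi_n$ and $\psi_n^2$ gives a point $Q=(x_0,y_0)\in E(\bar K)$. Since $\psi_n^2(x_0)=0$, $Q$ is an $n$-torsion point, so $[n]Q=O$. Then $x([n]Q)=\phi_n/\psi_n^2$ has a pole at $Q$. The order of vanishing of $\psi_n^2$ at $Q$ is exactly $2\cdot\mathrm{ord}$ of the pole, because $[n]$ is unramified when $\Delta\neq0$ and the characteristic does not divide $n$. Consequently $\phi_n(Q)\neq 0$, since otherwise the pole order of $x\circ[n]$ at $Q$ would be too small. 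This contradicts $\phi_n(x_0)=0$.

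The main obstacle in this fallback is the inseparable case, where the characteristic $p$ divides $n$. There I would simply use the resultant route, which is characteristic-free. So the intended proof is the resultant argument, with (\ref{Ayad resultant}) as the key input, and it is essentially immediate once that identity is granted.
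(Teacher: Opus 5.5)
Your proposal is correct and is essentially the paper's argument: the paper quotes the statement from Silverman's exercise as an immediate consequence of Ayad's identity (\ref{Ayad resultant}), since a nonzero resultant excludes a common factor. The one point needing care is when the characteristic of $K$ divides $n$, because then the formal leading coefficient $n^2$ of $\psi_n^2$ vanishes; in that case use $\Res_x(\phi_n,\psi_n^2)=\prod_{\phi_n(\lambda)=0}\psi_n^2(\lambda)$ (roots in $\overline{K}$ with multiplicity, valid because $\phi_n$ is monic) instead of the $\prod(\lambda_i-\mu_j)$ form, which also makes your geometric fallback unnecessary.
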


Both (\ref{Ayad resultant}) and the proposition above suggest that if the division polynomials are associated to a singular cubic Weierstrass curve, then they could share common zeros as elements of $\Z[x]$. Since net polynomials are a higher rank generalisation of division polynomials, we expect that a similar result should hold for net polynomials as well. Hence, we prove the result below. 

\begin{prop}\label{singular vanish}
Let $E/K \colon y^2=x^3+Ax+B$ be a singular cubic Weierstrass curve defined over $K$ and let $\bm{P}=(P_1, P_2) \in E\times E$ be a pair of distinct points. Let $\bm{v}=(v_1, v_2)\in \Z^2$. Then $\Phi_{\bm{v}}(\bm{P})$ and $\Psi_{\bm{v}}(\bm{P})$ vanish simultaneously if one of the following holds:
\begin{enumerate}
\item $|v_1|<2$, $|v_2|\geq 2$, and $P_2$ is the singular point;
\item $|v_2|<2$, $|v_1|\geq 2$, and $P_1$ is the singular point;
\item $|v_1|,|v_2|\geq 2$, and one of $P_1,P_2$ is the singular point.
\end{enumerate}
\end{prop}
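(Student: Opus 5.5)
We regard $\Psi_{\bm v}$ and $\Phi_{\bm v}$ as universal expressions in the coordinates $x_1,y_1,x_2,y_2$ and the coefficients $A,B$, obtained from the recursion of \Cref{net poly}. We then specialise one of the points to the singular point $S=(x_0,0)$ of the singular cubic, where $3x_0^2+A=0$ and $x_0^3+Ax_0+B=0$. The key observation is that at $S$ we have $\psi_2(S)=2y_0=0$. Also $\psi_3(S)=3x_0^4+6Ax_0^2+12Bx_0-A^2$ vanishes, because $\psi_3=2\psi_2\psi_2'' -(\psi_2')^2$ up to normalisation, equivalently because $\psi_3$ vanishes at any double root of $x^3+Ax+B$. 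A direct substitution using $A=-3x_0^2$, $B=2x_0^3$ confirms this. So $S$ behaves like a point ``killed by $2$ and $3$'', and the plan is to propagate this vanishing through the net.

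First we treat the one-dimensional directions. By \Cref{net poly}(c), $\Psi_{n\bm e_i}(\bm P)=\psi_n(P_i)$. Since $\psi_2(S)=\psi_3(S)=0$, the EDS recursion (equivalently Ayad's \Cref{Ayad TFAE}, (a)$\Rightarrow$(b), whose proof is purely algebraic) gives $\psi_n(S)=0$ for all $|n|\ge2$. Moreover $\phi_n=x\psi_n^2-\psi_{n+1}\psi_{n-1}$ vanishes there for $|n|\geq 2$, since $\psi_{n\pm1}$ are not both $\pm1$ when $|n|\ge2$. Next we extend to the lattice. We use the elliptic net recursion of \Cref{elliptic net} with a well-chosen $\bm p,\bm q,\bm r,\bm s$ to show $\Psi_{(v_1,v_2)}(\bm P)=0$ whenever $P_2=S$ and $|v_2|\ge2$, by induction on $|v_1|$. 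The base cases are $v_1\in\{0,\pm1\}$, which come from the initial values $\Psi_{2\bm e_2}=\psi_2(S)=0$, $\Psi_{\bm e_1+2\bm e_2}$ and the formula for $\Psi_{2\bm e_i+\bm e_j}$, plus the rank-one recursion. To compute $\Psi_{\bm e_1+2\bm e_2}$ with $P_2=S$ we use the formula in \Cref{net poly}(b) with $i=2,j=1$; we expect its vanishing to reduce to the tangent-line condition at the node. An alternative and cleaner route is to write each $\Psi_{\bm v}$, via Stange's explicit recursion formulas (the ``$n\bm e_2$ with shifts'' formulas), as a polynomial in which every monomial contains a factor $\Psi_{k\bm e_2}$ with $|k|\geq2$ once $|v_2|\ge2$. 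Symmetry handles case (2), and case (3) follows by applying whichever of the first two arguments corresponds to the singular coordinate.

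Finally, $\Phi_{\bm v}$ vanishes by \Cref{net poly}\ref{net poly b} with $i=2$: $\Phi_{\bm v}=\Psi_{\bm v}^2x_2-\Psi_{\bm v+\bm e_2}\Psi_{\bm v-\bm e_2}$. When $|v_2|\geq2$, at least one of $v_2\pm1$ has absolute value $\geq2$, so one of the factors vanishes, and $\Phi_{\bm v}=0$ follows. We also need the case $|v_2|=2$ with one neighbour at $|v_2\pm1|=1$; this is fine since only one factor is needed.

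The main obstacle is step two. The recursion requires division by $\Psi_{\bm r}$-type terms, and $\Psi$ is only a rational function of the points (the denominators are $x_1-x_2$). Because of this the ``inductive vanishing'' must be done with polynomial identities that have no division by terms vanishing at $S$. I would first try to establish, from Stange's formulas, that $\Psi_{\bm v}$ lies in the ideal generated by $\{\psi_k(P_2)\}_{|k|\ge2}$ inside $R_2$. The ring $R_2$ has $(x_1-x_2)$ inverted, and this is legitimate since $P_1\neq\pm P_2$.
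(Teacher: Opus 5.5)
Your overall structure matches the paper's. You reduce to showing $\Psi_{\bm v}(\bm P)=0$ whenever $P_i$ is the singular point $S$ and $|v_i|\ge 2$, and then get $\Phi_{\bm v}(\bm P)=0$ from $\Phi_{\bm v}=x(P_i)\Psi_{\bm v}^2-\Psi_{\bm v+\bm e_i}\Psi_{\bm v-\bm e_i}$; that last step is fine as you wrote it.

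The gap is the step you yourself flag as the main obstacle. You never resolve it, and the mechanism you propose for it is false. You plan to show that $\Psi_{\bm v}$ lies in the ideal $I\subset R_2$ generated by $\{\psi_k(P_2)\}_{|k|\ge 2}$, equivalently that every monomial can be made to contain some $\Psi_{k\bm e_2}$ with $|k|\ge2$.

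Write $f=x^3+Ax+B$. In characteristic $\neq 2$ one has $I=(y_2,f'(x_2)^2)$, for three reasons. The even $\psi_k$ are multiples of $\psi_2=2y_2$. Next, $\psi_3=2ff''-(f')^2\equiv -f'(x_2)^2 \pmod{y_2}$; this is the identity you wanted, not one in $\psi_2$. Finally, the odd recursion with all even terms zero gives $\psi_{2m+1}\equiv\pm\psi_3^{m(m+1)/2}$.

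On the other hand, start from the initial value $\Psi_{(1,2)}=2x_2+x_1-\bigl(\frac{y_1-y_2}{x_1-x_2}\bigr)^2$ and use $f(x_1)\equiv f(x_1)-f(x_2)\pmod{y_2}$. This gives
\[
(x_1-x_2)^2\,\Psi_{(1,2)}\equiv -(x_1-x_2)\,f'(x_2)\pmod{y_2}.
\]
But $f'(x_2)\notin I$. Take the node $f=(x-x_0)^2(x+2x_0)$ with $x_0\neq 0$ in characteristic $0$. Then $R_2/I\cong K[x_1,y_1,x_2][(x_1-x_2)^{-1}]/(y_1^2-f(x_1),(x_2-x_0)^2)$, and in this ring $f'(x_2)=6x_0(x_2-x_0)\neq0$.

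So $\Psi_{(1,2)}$ vanishes at $P_2=S$ only pointwise, not by membership in $I$. The same holds for the non-initial value $\Psi_{(2,2)}$, because $\Psi_{(2,2)}\Psi_{(1,-1)}=\Psi_{(0,2)}\Psi_{(2,1)}-\Psi_{(2,0)}\Psi_{(1,2)}\equiv -2y_1\Psi_{(1,2)}\pmod I$ and $\Psi_{(1,-1)}$ is a unit. Hence no choice of formulas expresses $\Psi_{(2,2)}$ with a factor $\Psi_{k\bm e_2}$ in every monomial.

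What you need instead is an induction whose hypothesis is the pointwise statement "$\Psi_{\bm u}(\bm P)=0$ for all smaller $\bm u$ with $|u_2|\ge 2$". It must be driven by relations that isolate $\Psi_{\bm v}$ with a coefficient built from the non-vanishing values $\Psi_{\bm e_1},\Psi_{\bm e_2},\Psi_{\bm e_1\pm\bm e_2}$. Your "induction on $|v_1|$ with base cases $v_1\in\{0,\pm1\}$" does not provide this. That base already contains infinitely many vectors $(\pm1,v_2)$, which neither the initial values nor the rank-one recursion reach. You also give no concrete choice of $\bm p,\bm q,\bm r,\bm s$.

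The paper inducts on $\lVert\bm v\rVert_\infty$ using Stange's halving relations, for example
\[
\Psi_{2\bm q+\bm e_1}=\Psi_{\bm q+\bm e_1+\bm e_2}\Psi_{\bm q+\bm e_1-\bm e_2}\Psi_{\bm q}^2-\Psi_{\bm q+\bm e_2}\Psi_{\bm q-\bm e_2}\Psi_{\bm q+\bm e_1}^2,
\]
which is the net relation with $\bm p-\bm q=\bm e_1$, $\bm r=\bm e_2$, $\bm s=\bm 0$. It uses analogous relations for the other parity classes. For large norm, every monomial contains a factor $\Psi_{\bm u}$ of smaller norm with $|u_i|\ge 2$, not necessarily on a coordinate axis. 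The finitely many small vectors, where these relations do not lower the norm, are checked directly from explicit formulas such as the one for $\Psi_{(1,2)}$; the paper takes $\lVert\bm v\rVert_\infty\le 4$.

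A minor point in your rank-one step: for even $n$ the EDS recursion divides by $\psi_2(S)=0$. So $\psi_n(S)=0$ should instead come from $\psi_n\in\psi_2\,\Z[x,A,B]$.
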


\begin{remark}
In the definition of net polynomials (see \Cref{net poly def}), it was stated that $\Phi_{\bm{v}}(\bm{P})$ and $\Psi_{\bm{v}}(\bm{P})$ are elements in the affine coordinate ring 
\[
R_{2}=K[x_i, y_i]_{1\leq i \leq 2}[(x_{i}-x_{j})^{-1}]/\langle f(x_{i}, y_{i})\rangle_{1\leq i \leq 2},
\]
where $f$ is the defining polynomial (\ref{short EC}) for the elliptic curve $E/K$. Therefore, we do not allow both points to be evaluated at the point singularity in this proposition. 
\end{remark}

\begin{remark}
The proposition implies that $\Phi_{\bm{v}}(\bm{P})$ and $\Psi_{\bm{v}}(\bm{P})$ cannot vanish simultaneously for $\bm{v}\in\{(0, \pm1), (\pm1, 0), (\pm1, \pm1), (\pm 1, \mp 1)\}$, the non-vanishing of $\Psi_{\bm{v}}(\bm{P})$ follows directly from the initial values. For the first three vectors, these values are $\pm1$, while $\Psi_{\pm(1,-1)}(\bm{P})=\pm(x_2-x_1)$ (see \cite[Theorem 2.5]{Stange} for the relevant recurrence). Since one of $P_1,P_2$ is the singular point $(x_*,0)$, the equality $x_1=x_2$ would force both points to equal $(x_*,0)$, which we have excluded from this proposition. Therefore, $\Phi_{\bm{v}}(\bm{P})$ and $\Psi_{\bm{v}}(\bm{P})$ cannot vanish simultaneously at these vectors. 
\end{remark}
The main intuition behind the proof is that the group law on $E$ does not apply to the singular point. Since the net polynomials $\Psi_{\bm{v}}\colon \Z^2 \rightarrow K$ are used to consider $\Z$--linear combination of points, the point of singularity should not be a well-defined input. 
\begin{proof}
We have assumed that $E$ is singular, so let $P_*=(x_*,y_*)$ denote its singular point. Then the following statements hold:
\begin{enumerate}
    \begin{multicols}{2}
        \item $\Delta=4A^3+27B^2=0$;
        \item $\frac{\partial f}{\partial y}(P_*)=2y_*=0$, so $y_*=0$;
        \columnbreak
        \item $\frac{\partial f}{\partial x}(P_*)=3x_*^2+A=0$;
        \item $x_*^3-3x_*^3+B=0$, so $B=2x_*^3$.
    \end{multicols}
\end{enumerate}
This also means that the singular point is $P_*=(x_*, 0)$. 

\textcolor{red}{The structure of the proof is as follows: we first prove the asserted vanishing of $\Psi_{\bm v}$ by induction on the sup-norm
\[
    \lVert\bm v\rVert_\infty=\max\left(|v_1|,|v_2|\right).
\]
By \cite[Lemma 2.4]{Stange}, the values of $\Psi_{\bm{v}}$ are integrally-implied by $\Psi_{\bm{v}}$ with $\lVert\bm v\rVert_\infty \leq 4$, so we handle the base case $\bm v=n\bm e_i$ and $\lVert\bm v\rVert_\infty\leq 4$. These cases follow from the explicit formulae for net polynomials in \cite[Proposition~3.8]{Stange}, together with recurrences stated in the proof of \cite[Theorem~2.5]{Stange}. Next, we will use the recurrence used in the proof of \cite[Lemma~2.4]{Stange} to show that for $\lVert\bm v\rVert_\infty>4$, $\Psi_{\bm{v}}$ can be expressed as a monomial in net polynomials $\Psi_{\bm{v}}$ with smaller sup-norm, hence the inductive hypothesis can be applied and completing the induction. Finally, we deduce the corresponding vanishing of $\Phi_{\bm v}$ from the relation between $\Phi_{\bm v}$ using \Cref{net poly}(e).}

\textcolor{red}{
For $\bm{v}=n\bm{e}_i$, $2\leq n \leq 4$, this reduces to the rank $1$ case by \Cref{net poly}(c). Note that $E$ in this case is not an elliptic curve, so  \Cref{Ayad TFAE} is not applicable. However, using the initial conditions of $\Psi_{n\bm{e}_i}(\bm{P})=\psi_n(P_i)$ (say, \cite[Section 1]{Akbary}), we can easily see that 
\[
\psi_2(P_*)=\psi_3(P_*)=\psi_4(P_*)=0.
\]
Next, we use the explicit formulas for net polynomials given in \cite[Proposition 3.8]{Stange} to show that the net polynomials $\Psi_{\bm{v}}$ vanish when one of $P_1$ or $P_2$ equals $P_*$ for $\bm{v}=n\bm{e}_i$ for $\bm{v}=(1,2) \text{ and } (2,1)$.}

 For $\bm{v}=(1, 2)$, we have 
\begin{align*}
\Psi_{(1, 2)}(\bm{P})
&= \frac{(2x_2+x_1)(x_2-x_1)^2-(y_2-y_1)^2}{(x_2-x_1)^2} \\
&= \frac{-4x_1x_2^2+2x_2^3+x_1^3+x_2^2x_1-(y_1^2+y_2^2-2y_1y_2)}{(x_2-x_1)^2} \\
&=\frac{-3x_2x_1^2+2x_2^3-Ax_1-2B+2y_1y_2}{(x_2-x_1)^2},
\end{align*}
where in the last step, we have substituted $y_i^2=x_i^3+Ax_i+B$ using the definition of net polynomials. Note that if we choose $P_2=(x_2, y_2)=(x_*, 0)=P_*$ and recall that this implies $B=2x_*^3$ and $A=-3x_*^2$, we get 
\begin{align*}
\Psi_{(1, 2)}(\bm{P})
&=\frac{-3x_1x_*^2+x_*^3-Ax_1-Ax_*-2B}{(x_*-x_1)^2} \\
&=\frac{x_*^3-Ax_*-2B}{(x_*-x_1)^2} \\
&=\frac{x_*^3+3x_*^3-4x_*^3}{(x_*-x_1)^2} \\
&=0
\end{align*}

Although, note that if we choose $P_1=(x_1, y_1)=(x_*, 0)$ instead, then 
\begin{align*}
\Psi_{(1, 2)}(\bm{P})
&=\frac{-3x_*x_2^2+x_2^3-Ax_2-Ax_*-2B}{(x_*-x_2)^2} \\
&=\frac{-3x_*x_2^2+x_2^3+3x_*^2x_2-x_*^3}{(x_*-x_2)^2} \\
&=x_2-x_* \\
&=\Psi_{(1, -1)}(P_*, P_2).
\end{align*}

Similarly, for $\Psi_{(2, 1)}(\bm{P})$, we have 
\[
\Psi_{(2, 1)}(P_*, P_2)=0, \text{ and } \Psi_{(2, 1)}(P_1, P_*)=x_1-x_*=\Psi_{(-1, 1)}(P_1, P_*).
\]

For $\Psi_{(2,2)},\Psi_{(2, -1)}, \Psi_{(-1,2)}$ and $\Psi_{(2,-2)}$, we use the recurrences from the proof of \cite[Theorem 2.5]{Stange}, which tells us that
\begin{align*}
\Psi_{(2,2)}
&=
\frac{
\Psi_{(0,2)}\Psi_{(2,1)}
-
\Psi_{(2,0)}\Psi_{(1,2)}
}{
\Psi_{(1,-1)}
}, \\
\Psi_{(2,-1)}
&=
\Psi_{(2,0)}\Psi_{(0,2)}
-
\Psi_{(2,1)}\Psi_{(1,-1)}^2, \\
\Psi_{(-1,2)}
&=
\Psi_{(2,-1)}
-
\Psi_{(1,-1)}^3,
\text{ and }\\
\Psi_{(2,-2)}
&=
\Psi_{(0,2)}\Psi_{(2,-1)}\Psi_{(1,-1)}
-
\Psi_{(2,0)}\Psi_{(1,-2)}\Psi_{(1,-1)}.
\end{align*}

Note that
\[
\Psi_{(2,0)}(P_*,P_2)
=
\Psi_{(2,1)}(P_*,P_2)
=
\Psi_{(0,2)}(P_1,P_*)
=
\Psi_{(1,2)}(P_1,P_*)
=
0,
\]
so we have
\[
\Psi_{(2,2)}(P_*,P_2)
=
\Psi_{(2,2)}(P_1,P_*)
=
0.
\]

Next, from the result of $\Psi_{(0,2)}, \Psi_{(2,0)}$ and $\Psi_{(2,1)}$, we may conclude that
\[
\Psi_{(2,-1)}(P_*,P_2)=0,
\qquad
\Psi_{(2,-1)}(P_1,P_*)=-(x_1-x_*)^3.
\]

Similarly, we also have
\[
\Psi_{(1,-2)}(P_1,P_*)=0,
\qquad
\Psi_{(1,-2)}(P_*,P_2)=-(x_2-x_*)^3,
\]
and
\[
\Psi_{(2,-2)}(P_*,P_2)
=
\Psi_{(2,-2)}(P_1,P_*)
=
0.
\]

\textcolor{red}{For the remaining vectors with $\lVert\bm v\rVert_\infty \leq 4$, we once again use the recurrences from the proof of \cite[Theorem 2.5]{Stange}. For clarity, we list all of them explicitly. 
\begin{enumerate}
\item $\Psi_{(1,3)}=\Psi_{(2,2)}\Psi_{(0,2)}+\Psi_{(1,-1)}\Psi_{(1,2)}^2$.
\item $\Psi_{(1,4)}=\Psi_{(1,3)}\Psi_{(0,2)}^2-\Psi_{(1,2)}^2\Psi_{(0,3)}$.
\item $\Psi_{(-1, 3)}=\Psi_{(1, -1)}^2\Psi_{(1, 2)}\Psi_{(-1, 2)}-\Psi_{(0, 2)}^2\Psi_{(-1, 2)}$.
\item $\Psi_{(-1, 4)}=\Psi_{(-1, 2)}^2\Psi_{(0, 3)}-\Psi_{(-1, 3)}\Psi_{(0, 2)}^2\Psi_{(-1, 1)}$. 
\item $\Psi_{(2,3)}=\Psi_{(2,2)}\Psi_{(0,2)} -\Psi_{(2,1)}\Psi_{(1,2)}^2$.
\item $\Psi_{(2,4)}=\Psi_{(2,3)}\Psi_{(0,2)}\Psi_{(1,2)} -\Psi_{(2,2)}\Psi_{(1,2)}\Psi_{(1,3)}$.
\item $\Psi_{(-2,3)}=\Psi_{(2,-1)}\Psi_{(-1,2)}^2 -\Psi_{(2,-2)}\Psi_{(0,2)}\Psi_{(1,-1)}^2$. 
\item $\Psi_{(-2,4)}=\Psi_{(2,-2)}\Psi_{(0,3)}\Psi_{(-1,2)}\Psi_{(1,-1)}+\Psi_{(2,-1)}\Psi_{(0,2)}\Psi_{(-1,3)}\Psi_{(-1,2)}$.
\item $\Psi_{(3, 1)}=\Psi_{(2, 2)}\Psi_{(2, 0)}-\Psi_{(1, -1)}\Psi_{(2, 1)}^2$.
\item $\Psi_{(3, 2)}=\Psi_{(2, 2)}\Psi_{(2, 0)}-\Psi_{(1, 2)}\Psi_{(2, 1)}^2$.
\item $\Psi_{(3, 3)}=\Psi_{(3, 2)}\Psi_{(1, 2)}-\Psi_{(2, 1)}\Psi_{(2, 2)}^2$.
\item $\Psi_{(3,4)}=\Psi_{(2,3)}\Psi_{(2,1)}\Psi_{(1,2)}^2-\Psi_{(1,3)}\Psi_{(2,2)}^2$.
\item $\Psi_{(3, -1)}=\Psi_{(1, -1)}^2\Psi_{(2, 1)}\Psi_{(2, -1)}-\Psi_{(2, 0)}^2\Psi_{(1, -2)}$.
\item $\Psi_{(3,-2)}=\Psi_{(1, -1)}^2\Psi_{(2,-2)}\Psi_{(2,0)}-\Psi_{(1,-2)}\Psi_{(2,-1)}^2$.
\item $\Psi_{(3, -3)}=\Psi_{(3, -1)}\Psi_{(1, -1)}\Psi_{(1, -2)}+\Psi_{(2, -2)}\Psi_{(0, 2)}\Psi_{(2, -1)}^2$.
\item $\Psi_{(3,-4)}=\Psi_{(2,-3)}\Psi_{(2,-1)}\Psi_{(1,-2)}^2-\Psi_{(1, -1)}\Psi_{(1,-3)}\Psi_{(2,-2)}^2$.
\item $\Psi_{(4, 1)}=\Psi_{(3,1)}\Psi_{(2,0)}^2-\Psi_{(2,1)}^{2}\Psi_{(3,0)}$.
\item $\Psi_{(4, 2)}=\Psi_{(3,2)}\Psi_{(2,0)}\Psi_{(2,1)}-\Psi_{(2,2)}\Psi_{(2,1)}\Psi_{(3,1)}$.
\item $\Psi_{(4, 3)}=\Psi_{(3,2)}\Psi_{(1,2)}\Psi_{(2,1)}^{2}-\Psi_{(3,1)}\Psi_{(2,2)}^{2}$.
\item $\Psi_{(4, 4)}=\Psi_{(3,3)}\Psi_{(2,1)}\Psi_{(1,2)}\Psi_{(2,2)}-\Psi_{(2,3)}\Psi_{(2,2)}\Psi_{(3,2)}$.
\item $\Psi_{(4, -1)}=\Psi_{(2, -1)}^2\Psi_{(3, 0)}-\Psi_{(3, -1)}\Psi_{(2, 0)}^2\Psi_{(1, -1)}$. 
\item $\Psi_{(4, -2)}=\Psi_{(2,-2)}\Psi_{(3,0)}\Psi_{(2,-1)}\Psi_{(1,-1)}-\Psi_{(1,-2)}\Psi_{(2,0)}\Psi_{(3,-1)}\Psi_{(2,-1)}$.
\item $\Psi_{(4, -3)}=\Psi_{(1, -1)}\Psi_{(3, -1)}\Psi_{(2, -2)}^2-\Psi_{(3, -2)}\Psi_{(1, -2)}\Psi_{(2, -1)}^2$. 
\item $\Psi_{(4, -4)}=\Psi_{(2,-3)}\Psi_{(3,-1)}\Psi_{(2,-2)}\Psi_{(1,-2)}-\Psi_{(1,-3)}\Psi_{(2,-1)}\Psi_{(3, -2)}\Psi_{(2, -2)}$.
\end{enumerate}
}
By \Cref{odd net}, $\Psi_{-\bm{v}}=-\Psi_{\bm{v}}$, so the asserted vanishing at $\bm{v}$ also holds at $-\bm{v}$. Hence, it suffices to consider one representative of each pair $\{\bm{v},-\bm{v}\}$. Together with the vectors already considered, the above recurrences and formulae cover all vectors $\bm{v}$ with $2\leq\lVert\bm{v}\rVert_\infty\leq4$ up to sign and we obtain
\begin{enumerate}
\item $\Psi_{(3, 1)}(P_*,P_2)=\Psi_{(3, -1)}(P_*,P_2)=\Psi_{(4, 1)}(P_*,P_2)=\Psi_{(4,-1)}(P_*,P_2)=0$,
\item $\Psi_{(1, 3)}(P_1,P_*)=\Psi_{(-1, 3)}(P_1,P_*)=\Psi_{(1, 4)}(P_1,P_*)=\Psi_{(-1, 4)}(P_1,P_*)=0$, \text{and }
\item $\Psi_{\bm{v}}(P_1,P_*)=\Psi_{\bm{v}}(P_*,P_2)=0$ for $\bm{v}=(v_1, v_2)$ with $2\leq |v_{i}| \leq 4$. 
\end{enumerate}

We now follow the proof of \cite[Lemma 2.4]{Stange} and induct on $\lVert\bm v\rVert_\infty$. The assertion has already been shown for all relevant vectors $\bm{v} \in \Z^2$ with $\lVert\bm{v}\rVert_\infty\leq4$. Let $N_0>4$, and suppose that the assertion holds for every $\bm u\in\mathbb Z^2$ satisfying $\lVert\bm u\rVert_\infty<N_0$. 

Let $\bm{v}\in \Z^2$ satisfy $\lVert\bm v\rVert_\infty=N_0$. For $i=1, 2$, define $w_i=\lceil \frac{v_i}{2}\rceil$. Stange showed that we can construct a recurrence, depending on the parity of $v_i$, demonstrating that $\Psi_{\bm{v}}$ can be expressed in terms of $\Psi_{\bm{u}}$ with $\lVert\bm{u}\rvert_{\infty} \leq N_0$. 

\underline{Case I:} without loss of generality, assume $v_1$ is odd and $v_2$ is even.
\[
\Psi_{(v_1, v_2)}=\Psi_{(w_1, w_2+1)}\Psi_{(w_1, w_2-1)}\Psi_{(w_1-1, w_2)}^2-\Psi_{(w_1-1, w_2+1)}\Psi_{(w_1-1, w_2-1)}\Psi_{(w_1, w_2)}^2.
\]
\underline{Case II:} both $v_1$ and $v_2$ are odd.
\[
\Psi_{(v_1, v_2)}=\Psi_{(w_1, w_2+1)}\Psi_{(w_1, w_2-1)}\Psi_{(w_1-1, w_2-1)}^2-\Psi_{(w_1-1, w_2)}\Psi_{(w_1-1, w_2-2)}\Psi_{(w_1, w_2)}^2.
\]
\underline{Case III:} both $v_1$ and $v_2$ are even.
\[
\Psi_{(v_1, v_2)}=\Psi_{(w_1+1, w_2+1)}\Psi_{(w_1, w_2-1)}\Psi_{(w_1-1, w_2)}\Psi_{(w_1, w_2)}-\Psi_{(w_1, w_2+1)}\Psi_{(w_1-1, w_2-1)}\Psi_{(w_1+1, w_2)}\Psi_{(w_1, w_2)}.
\]
If $v_i$ is even, every $i$-th coordinate appearing on the right-hand side belongs to $\{w_i-1,w_i,w_i+1\}$. Hence
\[
    |u_i|\leq |w_i|+1=\frac{|v_i|+2}{2}.
\]
If $v_i$ is odd, every such coordinate belongs to $\{w_i-2,w_i-1,w_i,w_i+1\}$. Since $w_i=(v_i+1)/2$, we may deduce that
\[
    |u_i|
    \leq \max\left(|w_i-2|,|w_i+1|\right)
    =\frac{|v_i|+3}{2}.
\]
Consequently, every vector $\bm{u}$ appearing on the right-hand side of the three recurrences satisfies
\[
    \lVert\bm{u}\rVert_\infty
    \leq \frac{\lVert\bm{v}\rVert_\infty+3}{2}
    =\frac{N_0+3}{2}<N_0,
\]
where the final inequality follows from $N_0>4$.
Suppose that $P_i=P_*$ and $|v_i|\geq 2$. We claim that each of the two monomials contains a factor $\Psi_{\bm{u}}$ with $|u_i|\geq 2$.

If $v_i$ is even, then $|w_i|\geq 1$ and each monomial contains factors whose $i$-th coordinates are $w_i-1$ and $w_i+1$. At least one of these coordinates has absolute value at least $2$. If $v_i$ is odd, then $|v_i|\geq 3$. In Cases I and II, the relevant coordinates are $w_i$ and $w_i-1$, at least one of which has absolute value at least $2$, except when $i=2$ in Case II. In that case, the first monomial contains coordinates $w_2+1$ and $w_2-1$, while the second contains coordinates $w_2$ and $w_2-2$. Since $|v_2|\geq 3$, each pair contains a coordinate of absolute value at least $2$.

Overall, each monomial contains a factor $\Psi_{\bm{u}}$ such that $\lVert\bm{u} \rVert_\infty<N_0$ and $|u_i|\geq 2$. By the inductive hypothesis, this factor vanishes at
$\bm{P}$. Both monomials therefore vanish, giving $\Psi_{\bm{v}}(\bm{P})=0$ and completing the induction.

Finally, we turn our attention to the numerator function $\Phi_{\bm{v}}$. From \Cref{net poly}, we can express $\Phi_{\bm{v}}$ in terms of $\Psi_{\bm{v}}$:
\begin{equation}\label{phi}
\Phi_{\bm{v}}=\Psi_{\bm{v}}^2x(P_i)-\Psi_{\bm{v}+\bm{e}_i}\Psi_{\bm{v}-\bm{e}_i},
\end{equation}
which allows us to conclude the proof:
\begin{enumerate}
\item If $\bm{v}=(v_1, \pm 1)$ with $|v_1|>1$, then choose $i=2$ in (\ref{phi}), which gives us 
\begin{align*}
\Phi_{\bm{v}}(P_*, P_2)
&=x_2\Psi_{\bm{v}}(P_*, P_2)^2-\Psi_{(v_1, v_2+1)}(P_*, P_2)\Psi_{(v_1, v_2-1)}(P_*, P_2) \\
&=0 \\
&=\Psi_{\bm{v}}(P_*, P_2).
\end{align*}
\item If $\bm{v}=(\pm1, v_2)$ with $|v_2|>1$, then choose $i=1$ in (\ref{phi}), similar to the case above, it also gives us 
\[
\Phi_{\bm{v}}(P_1, P_*)=\Psi_{\bm{v}}(P_1, P_*)=0.
\]
\item If $\bm{v}$ satisfies $\min_{i=1, 2}|v_i|\geq 2$, by the previous argument for $\Psi_{\bm{v}}$, we obtain
\[
\Phi_{\bm{v}}(P_1, P_*)=\Phi_{\bm{v}}(P_*, P_2)=\Psi_{\bm{v}}(P_*, P_2)=\Psi_{\bm{v}}(P_1, P_*)=0.
\]
\end{enumerate}
In all cases, $\Phi_{\bm{v}}$ and $\Psi_{\bm{v}}$ vanish simultaneously when one of $P_1$ and $P_2$ is evaluated at the singular point $P_*$. 
\end{proof}

Now we have shown that if a cubic Weierstrass curve $E$ is singular, then the net polynomials associated to $E$ have a common vanishing locus in $E^2$. We would like to show that this is the only condition that the net polynomials associated to $E$ and an appropriate pair of points $\bm{P} \in E(K)$ can vanish simultaneously. 

\begin{prop}
Let $\Psi_{\bm{v}}(\bm{P})\colon \Z^2\rightarrow K$ be an elliptic net associared to a cubic Weierstrass curve $E$ and $\bm{P} \in E(K)^2$ be a pair of appropriate, non--singular points, then $\Psi_{\bm{v}}(\bm{P})$ and $\Phi_{\bm{v}}(\bm{P})$ do not both vanish. 
\end{prop}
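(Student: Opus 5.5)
We argue by contradiction. Suppose that for some $\bm{v}\in\Z^2$ both $\Psi_{\bm{v}}(\bm{P})=0$ and $\Phi_{\bm{v}}(\bm{P})=0$. The first vanishing is exactly the situation covered by \Cref{net polynomial vanish}: since $\bm{P}$ is an appropriate pair of points on the non--singular part $E_{\text{ns}}$, we get $\bm{v}\cdot\bm{P}=O$ in the group $E_{\text{ns}}(K)$. The remark after \Cref{singular vanish} shows that $\Psi_{\bm{v}}(\bm{P})\neq 0$ for $\lVert\bm{v}\rVert_\infty\le 1$ (the values are $\pm1$ or $\pm(x_2-x_1)$, and $x_1\ne x_2$ by appropriateness). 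Hence $\bm{v}\neq\bm{0}$ and we may assume $\bm{v}\ne \pm\bm{e}_i,\pm(\bm{e}_1\pm\bm{e}_2)$.

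The key step is to use \Cref{net poly}(e) with a well-chosen index $i$:
\[
\Phi_{\bm{v}}=\Psi_{\bm{v}}^2x(P_i)-\Psi_{\bm{v}+\bm{e}_i}\Psi_{\bm{v}-\bm{e}_i}.
\]
With $\Psi_{\bm{v}}(\bm{P})=0$, the hypothesis $\Phi_{\bm{v}}(\bm{P})=0$ forces $\Psi_{\bm{v}+\bm{e}_i}(\bm{P})\Psi_{\bm{v}-\bm{e}_i}(\bm{P})=0$. Applying \Cref{net polynomial vanish} again, we get $(\bm{v}\pm\bm{e}_i)\cdot\bm{P}=O$ for one choice of sign. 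Subtracting this from $\bm{v}\cdot\bm{P}=O$ in the group $E_{\text{ns}}(K)$ gives $\pm P_i=O$, i.e.\ $P_i=O$. This contradicts condition (1) of \Cref{appropriate}. Since $i\in\{1,2\}$ was arbitrary, one index already suffices, and this conclusion is the desired contradiction.

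The main obstacle is making sure that (e) is a genuine identity of values in $K$ for a cubic curve that may be singular, rather than an identity only in the coordinate ring $R_2$ of an elliptic curve. The points lie on $E_{\text{ns}}$, and by \cite[Section 5]{Stange} the net $\Psi_{\bm{v}}(\bm{P})$ is defined for any cubic Weierstrass curve. The identity in (d) and (e) is polynomial in the coordinates and holds generically, so it specialises to appropriate points of $E_{\text{ns}}$ by Zariski density. To make this rigorous, I would first check that the denominators $(x_1-x_2)$ do not vanish, which holds because $P_1\ne\pm P_2$. One would also need that \Cref{net polynomial vanish} applies to all three vectors $\bm{v},\bm{v}\pm\bm{e}_i$ simultaneously; it does, since that proposition is stated for every $\bm{v}\in\Z^2$.
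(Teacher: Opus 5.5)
Your argument is correct and is essentially the paper's proof. Both use \Cref{net polynomial vanish} to turn $\Psi_{\bm v}(\bm P)=0$ into $\bm v\cdot\bm P=O$, then use \Cref{net poly}(e) to reduce $\Phi_{\bm v}(\bm P)$ to $-\Psi_{\bm v+\bm e_i}(\bm P)\Psi_{\bm v-\bm e_i}(\bm P)$, which is nonzero because $(\bm v\pm\bm e_i)\cdot\bm P=\pm P_i\neq O$. Your preliminary reduction is superfluous, and it is misstated at $\bm v=\bm 0$, where $\Psi_{\bm 0}=0$ by definition; nothing depends on it, though, since the main argument already covers $\bm v=\bm 0$ (there $\Phi_{\bm 0}=1$).
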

\begin{proof}
Recall from \Cref{net polynomial vanish} that under our assumptions, $\Psi_{\bm{v}}(\bm{P})=0$ if and only if 
\[
\bm{v}\cdot \bm{P}=[v_1]P_1+[v_2]P_2=O.
\]
Therefore, it remains to show that $\Phi_{\bm{v}}(\bm{P})\neq 0$ when $\bm{v}\cdot \bm{P}=[v_1]P_1+[v_2]P_2=O$. Assume this is the case, then from \Cref{net poly}\ref{net poly b}, we obtain 
\[
\Phi_{\bm{v}}(\bm{P})= \Psi_{\bm{v}}^{2}(\bm{P})\,x(P_i)- \Psi_{\bm{v} + \bm{e}_i}(\bm{P})\Psi_{\bm{v} - \bm{e}_i}(\bm{P})=\Psi_{\bm{v} + \bm{e}_i}(\bm{P})\Psi_{\bm{v} - \bm{e}_i}(\bm{P}).
\]
Since $P_1, P_2$ are appropriate, neither of them are the point of infinity. Therefore, $\left(\bm{v}\pm\bm{e}_i\right)\cdot \bm{P} = \pm P_i \neq O$ and $\Phi_{\bm{v}}(\bm{P})$ is necessarily nonzero. 
\end{proof}

\begin{example}
Consider the curve $E/K\colon y^2=x^3-3x+2$, a cubic curve in short Weierstrass form defined over a number field $K$ with singularity at $P_*=(1, 0)$. Therefore, $E(K)\setminus\{(1, 0)\}$ forms a group on the curve. Below we list a few values of $\Psi_{\bm{v}}$ and $\Phi_{\bm{v}}$,

\begin{itemize}
\item  For $\bm{v}=(0, 2)$, we have $\Psi_{(0, 2)}=4(x_2-1)^2(x+2)$ and $\Phi_{(0, 2)}=(x_2-1)^2(x_2^2+2x_2+9)$. They both vanish at $(x_2, y_2)=(1, 0)$. 
\item For $\bm{v}=(1, 2)$, we have the following:
\begin{align*}
\Psi_{(1, 2)}
&=\frac{-x_1^3 + 3x_1x_2^2 - 2x_2^3 + y_1^2 - 2y_1y_2 + y_2^2}{(x_2-x_1)^2};
\\[1ex]
\Phi_{(1, 2)}
&=
\frac{1}{(x_2-x_1)^4}
\Bigl(
x_1^7 - 6x_1^5x_2^2 + 4x_1^4x_2^3 + 9x_1^3x_2^4- 12x_1^2x_2^5 + 4x_1x_2^6+ 4x_1^5y_1y_2 - 8x_1^4y_1x_2y_2
\\
&\quad
- 8x_1^3y_1x_2^2y_2 + 32x_1^2y_1x_2^3y_2- 28x_1y_1x_2^4y_2 + 8y_1x_2^5y_2- 8x_1^5y_2^2 + 28x_1^4x_2y_2^2-32x_1^3x_2^2y_2^2 
\\
&\quad
+ 8x_1^2x_2^3y_2^2+ 8x_1x_2^4y_2^2- 4x_2^5y_2^2- 2x_1^4y_1^2 + 6x_1^2y_1^2x_2^2- 4x_1y_1^2x_2^3 + 4x_1^4y_1y_2- 4x_1^2y_1^3y_2 
\\
&\quad
+ 8x_1y_1^3x_2y_2- 12x_1^2y_1x_2^2y_2- 4y_1^3x_2^2y_2 + 8x_1y_1x_2^3y_2- 2x_1^4y_2^2 + 12x_1^2y_1^2y_2^2- 24x_1y_1^2x_2y_2^2
\\
&\quad
+ 6x_1^2x_2^2y_2^2+ 12y_1^2x_2^2y_2^2- 4x_1x_2^3y_2^2 - 12x_1^2y_1y_2^3+ 24x_1y_1x_2y_2^3- 12y_1x_2^2y_2^3
\\
&\quad
+ 4x_1^2y_2^4- 8x_1x_2y_2^4 + 4x_2^2y_2^4+ x_1y_1^4 - 4x_1y_1^3y_2+ 6x_1y_1^2y_2^2 - 4x_1y_1y_2^3 + x_1y_2^4
\Bigr).
\end{align*}
If we replace $y_i^2$ with $x_i^3-3x+2$ for $i=1, 2$, we get 
\begin{align*}
\Psi_{(1, 2)}
&=\frac{(x_2-1)(3x_1x_2+3x_1-x_2^2-x_2-4)-2y_1y_2}{(x_2-x_1)^2}; \\
\Phi_{(1,2)}
&=\frac{-4y_1y_2(x_2-1)}{(x_2-x_1)^4}\Bigl(3x_1^3x_2 + 3x_1^3 - 5x_1^2x_2^2 - 2x_1^2x_2- 5x_1^2 + x_1x_2^3 + 15x_1x_2 
\\
&\quad 
- 4x_1 + x_2^4 + x_2^3 - 8x_2^2\Bigr) 
\\
&\quad +\left(\frac{x_2-1}{x_2-x_1}\right)^2\Bigl(4x_1^5x_2 + 8x_1^5 + 4x_1^4x_2^2 + 12x_1^4x_2 + 8x_1^4 - 20x_1^3x_2^3 - 31x_1^3x_2^2 
\\
&\quad
- 18x_1^3x_2 -63x_1^3 + 12x_1^2x_2^4 + 18x_1^2x_2^3 + 48x_1^2x_2^2 + 110x_1^2x_2 + 16x_1^2 
\\
&\quad
+ x_1x_2^4 - 10x_1x_2^3 - 79x_1x_2^2 - 112x_1x_2 + 32x_1 - 12x_2^4 + 8x_2^3 + 64x_2^2\Bigr). 
\end{align*}
They both vanish at $(x_2, y_2)=(1, 0)$, but not at $(x_1, y_1)=(1, 0)$ as stated by the proposition. 
\item For $\bm{v}=(2, -2)$, we provide the values of $\Psi_{(2, -2)}$ and $\Phi_{(2, -2)}$ with $y_i^2$ replaced by $x_i^3-3x+2$ for $i=1, 2$. 
	\begin{align*}
\Psi_{(2, -2)}
&=
(x_2-x_1)^5\left[y_1(x_2 - 1)(x_2^2+ 3x_1x_2 + x_2 + 3x_1 - 8)\right]
\\
&\quad
+(x_2-x_1)^5\left[y_2(x_1-1)(x_1^2 + 3x_1x_2 + x_1 + 3x_2 - 8)\right] 
\\
\Phi_{(2, -2)}
&=
8 y_1 y_2(x_2-x_1)^{16}\bigl(x_1^5 x_2^2 + 7 x_1^4 x_2^3 + 7 x_1^3 x_2^4 + x_1^2 x_2^5+ x_1^5 x_2 + 8 x_1^4 x_2^2 + 14 x_1^3 x_2^3 
\\
&\quad
+ 8 x_1^2 x_2^4 + x_1 x_2^5+ 4 x_1^5 - 22 x_1^4 x_2 - 30 x_1^3 x_2^2 - 30 x_1^2 x_2^3 - 22 x_1 x_2^4 + 4 x_2^5+ 13 x_1^4 
\\
&\quad
- 100 x_1^3 x_2 + 46 x_1^2 x_2^2 - 100 x_1 x_2^3 + 13 x_2^4+ 25 x_1^3 + 111 x_1^2 x_2 + 111 x_1 x_2^2 + 25 x_2^3+ 20 x_1^2 
\\
&\quad
+ 56 x_1 x_2  + 20 x_2^2 - 160 x_1 - 160 x_2 + 128\bigr)
\\
&\quad 
+(x_2-x_1)^{16}(x_1-1)(x_2-1)\bigl(x_1^6 x_2^2 + 28 x_1^5 x_2^3 + 70 x_1^4 x_2^4 + 28 x_1^3 x_2^5 + x_1^2 x_2^6+ 2 x_1^6 x_2 
\\
&\quad
+ 58 x_1^5 x_2^2 + 196 x_1^4 x_2^3 + 196 x_1^3 x_2^4 + 58 x_1^2 x_2^5 + 2 x_1 x_2^6+ 9 x_1^6 + 16 x_1^5 x_2 - 209 x_1^4 x_2^2 
\\
&\quad
+ 112 x_1^3 x_2^3 - 209 x_1^2 x_2^4 + 16 x_1 x_2^5 + 9 x_2^6+ 114 x_1^5 - 726 x_1^4 x_2 - 668 x_1^3 x_2^2 - 668 x_1^2 x_2^3 
\\
&\quad
- 726 x_1 x_2^4 + 114 x_2^5+ 273 x_1^4 - 1060 x_1^3 x_2 + 1702 x_1^2 x_2^2 - 1060 x_1 x_2^3 + 273 x_2^4+ 576 x_1^3 
\\
&\quad
+ 1984 x_1^2 x_2 + 1984 x_1 x_2^2 + 576 x_2^3- 384 x_1^2 - 256 x_1 x_2 - 384 x_2^2 - 2048 x_1 - 2048 x_2 + 2048
\bigr)
	\end{align*}
They both vanish whenever we evaluate $(x_1, y_1)=(1, 0)$ and $(x_2, y_2)=(1, 0)$ as stated by the proposition. 
\end{itemize}
\end{example}

%% file: Resultant_and_net_polynomials.tex
\section{Resultant and net polynomials}\label{Section 4}
From the previous section, we now know that if the net polynomials are associated to a singular cubic Weierstrass curve, then they could possibly both vanish. In the case of rank $1$ division polynomials, this result is implied by (\ref{Ayad resultant}) using the method of resultants. As stated in \cite[Exercise III.3.7(a)]{Silverman1}, the division polynomial $\psi_n$ satisfies
\[
\psi_n(x)\in\begin{cases}
			\Z[x, A, B], & \text{$n$ odd}\\
            2y\Z[x, A, B], & \text{$n$ even}
		 \end{cases}.
\]
Therefore, we always consider $\psi_n^2$ instead to eliminate the $y$ variable using the Weierstrass equation, which is also equivalent to 
\[
\psi_n^2(x)=\Res_{y}(\psi_n(x, y), y^2-x^3-Ax-B).
\]
Therefore, when we write $\Res(\phi_n, \psi_n^2)$, we are actually considering 
\[
\Res_{x}(\Res_y(\psi_n(x, y), y^2-x^3-Ax-B), \phi_n).
\]
We would like to do the same for net polynomials, eliminate all the $y_i$--variables such that both $\Psi_{\bm{v}}(\bm{P})$ and $\Phi_{\bm{v}}(\bm{P})$ can be defined just by $x_1=x(P_1)$ and $x_2=x(P_2)$ using the same method. However, as we can see from the definition of net polynomials, they are not truly polynomials. Our solution is to multiply it by powers of $(x_2-x_1)$, then the denominators will be eliminated and $\Psi_{\bm{v}}(\bm{P})$ and $\Phi_{\bm{v}}(\bm{P})$ will truly be polynomials in $\Z[A, B][x_1, x_2, y_1, y_2]$. 

According to the proof of  \cite[Proposition 7.1]{Stange}, for $E \colon y^2=x^3+Ax+B$,
\begin{equation}\label{homothety}
\Psi_{\bm{v}}(\lambda^2x_1; \lambda^2x_2; \lambda^3y_1, \lambda^3y_2, \lambda^4A, \lambda^6B)=\lambda^{g(\bm{v})}\Psi_{\bm{v}}(x_1, x_2, y_1, y_2, A, B),
\end{equation}
where $g(\bm{v})=v_1^2+v_2^2-v_1v_2-1$. This tells us that $\Psi_{\bm{v}}(\bm{P})$ is homogenous of weight $g(\bm{v})$. The maximum number of $(x_2-x_1)^{-1}$ terms that can show up in $\Psi_{\bm{v}}(\bm{P})$ is then $g(\bm{v})$ as well, since the term $(y_2-y_1)/(x_2-x_1)$ is homogenous of weight $1$. Next, note that for any $\lambda \in \R\setminus\{0\}$, (\ref{homothety}) tells us that
\[
x\left(\bm{v}\cdot\bm{P}\right)=\frac{\Phi_{\bm{v}}(\bm{P})}{\Psi_{\bm{v}}^2(\bm{P})}
\implies 
\lambda^{2}x\left(\bm{v}\cdot\bm{P}\right)=\frac{\lambda^{2g(\bm{v})+2}\Phi_{\bm{v}}(\bm{P})}{\lambda^{2g(\bm{v})}\Psi_{\bm{v}}^2(\bm{P})},
\]
which implies that $\Phi_{\bm{v}}(\bm{P})$ is homogenous of weight $2g(\bm{v})+2$. If we multiply both $\Psi_{\bm{v}}$ and $\Phi_{\bm{v}}$ by $(x_2-x_1)^{g(\bm{v})+1}$, we are guaranteed to clear the denominators. Therefore, if we denote 
\begin{equation} \label{new poly}
\begin{aligned}
\Psi_{\bm{v}}^{*}
&=(x_2-x_1)^{g(\bm{v})+1}\Psi_{\bm{v}}=(x_2-x_1)^{v_1^2+v_2^2-v_1v_2}\Psi_{\bm{v}}; \\
\Phi_{\bm{v}}^{*}
&=(x_2-x_1)^{2g(\bm{v})+2}\Phi_{\bm{v}}=(x_2-x_1)^{2(v_1^2+v_2^2-v_1v_2)}\Phi_{\bm{v}}, 
\end{aligned}
\end{equation}
then $\Psi_{\bm{v}}^{*}$ and $\Phi_{\bm{v}}^{*}$ are truly polynomials in $\Z[A, B][x_1, x_2, y_1, y_2]$ and we can apply resultants to eliminate the $y_i$--variables. Define 
\[
f_1(x_1, x_2, y_1, y_2)=y_1^2-x_1^3-Ax_1-B, \text{ and } f_2(x_1, x_2, y_1, y_2)=y_2^2-x_2^3-Ax_2-B,
\]
then \Cref{resultant properties} gives us
\begin{equation*}
\Res_{y_2}(\Psi_{\bm{v}}^{*2}, f_2)(x_1, x_2, y_1)=\Psi_{\bm{v}}^{*2}\left(x_1, x_2, y_1, \sqrt{x_2^3+Ax_2+B}\right)\Psi_{\bm{v}}^{*2}\left(x_1, x_2, y_1, -\sqrt{x_2^3+Ax_2+B}\right).
\end{equation*}
Next, we eliminate $y_1$ from $\Psi_{\bm{v}}^{*2}$ using resultants again, which gives us
\begin{align*}
&\quad\Res_{y_1}\left(\Res_{y_2}(\Psi_{\bm{v}}^{*2}, f_2), f_1\right) \\
&=\Res_{y_2}(\Psi_{\bm{v}}^{*2}, f_2)\left(x_1, x_2, \sqrt{x_1^3+Ax_1+B}\right)\Res_{y_2}(\Psi_{\bm{v}}^{*2}, f_2)\left(x_1, x_2, -\sqrt{x_1^3+Ax_1+B}\right) \\
&=\Psi_{\bm{v}}^{*2}\left(x_1, x_2,  \sqrt{x_1^3+Ax_1+B}, \sqrt{x_2^3+Ax_2+B}\right)\Psi_{\bm{v}}^{*2}\left(x_1, x_2,  -\sqrt{x_1^3+Ax_1+B}, \sqrt{x_2^3+Ax_2+B}\right) \\
&\quad \Psi_{\bm{v}}^{*2}\left(x_1, x_2,  \sqrt{x_1^3+Ax_1+B}, -\sqrt{x_2^3+Ax_2+B}\right)\Psi_{\bm{v}}^{*2}\left(x_1, x_2,  -\sqrt{x_1^3+Ax_1+B}, -\sqrt{x_2^3+Ax_2+B}\right)
\end{align*}

\begin{defn}\label{R1 R2}
Let $\Psi_{\bm{v}}^{*}$ and $\Phi_{\bm{v}}^{*}$ be as in (\ref{new poly}). We define 
\[
R_1(x_1, x_2)=\Res_{y_1}\left(\Res_{y_2}(\Psi_{\bm{v}}^{*2}, f_2), f_1\right) \text{and } R_2(x_1, x_2)=\Res_{y_1}\left(\Res_{y_2}(\Phi_{\bm{v}}^{*}-t\Psi_{\bm{v}}^{*2}, f_2), f_1\right),
\]
where $t$ is an element in $\overline{\Z}$, so both $R_1(x_1, x_2)$ and $R_2(x_1, x_2)$ are both elements of $\Z[A, B, t][x_1, x_2]/\langle f_1, f_2\rangle$.
\end{defn}

The reason why we consider $\Phi_{\bm{v}}^{*}-t\Psi_{\bm{v}}^{*2}$ instead of just $\Phi_{\bm{v}}^{*}$ is due to the technical reason below: when we take resultants to eliminate $y_1$ and $y_2$, $\Psi_{\bm{v}}^{*2}$ and $\Phi_{\bm{v}}$ are unable to distinguish between $P_i$ and $-P_i$. More specifically, if $P_1, P_2$ are appropriate points on the cubic curve $E$, then \Cref{net polynomial vanish} implies the following:
\begin{align*}
\Psi_{\bm{v}}^{*2}\left(x_1, x_2,  -\sqrt{x_1^3+Ax_1+B}, \sqrt{x_2^3+Ax_2+B}\right)=0 &\iff [-v_1]P_1+[v_2]P_2=O, \\
\Psi_{\bm{v}}^{*2}\left(x_1, x_2,  \sqrt{x_1^3+Ax_1+B}, -\sqrt{x_2^3+Ax_2+B}\right)=0 &\iff [v_1]P_1-[v_2]P_2=O, \\
\Phi_{\bm{v}}\left(x_1, x_2,  -\sqrt{x_1^3+Ax_1+B}, \sqrt{x_2^3+Ax_2+B}\right)=0 &\iff x\left([-v_1]P_1+[v_2]P_2\right)=t, \\
\Phi_{\bm{v}}\left(x_1, x_2,  \sqrt{x_1^3+Ax_1+B}, -\sqrt{x_2^3+Ax_2+B}\right)=0 &\iff x\left([v_1]P_1-[v_2]P_2\right)=t.
\end{align*}

This means that knowing when $R_1(x_1, x_2)$ and $\Res_{y_1}\left(\Res_{y_2}(\Phi_{\bm{v}}^{*2}, f_2), f_1\right)$ vanish simultaneously does not fully reflect when $\Psi_{\bm{v}}(\bm{P})$ and $\Phi_{\bm{v}}(\bm{P})$ vanish simultaneously. We summarise the above in the following lemma.
\begin{lemma}\label{zero condition}
Let $E/K$ be a cubic curve defined over a number field $K$, not necessarily smooth, defined by (\ref{short EC}). Let $R_1(x_1, x_2), R_2(x_1, x_2) \in \Z[A, B, t][x_1, x_2]$ be as in \Cref{R1 R2}, where $t \in \overline{\Z}$. Then $R_1(x_1, x_2)=R_2(x_1, x_2)=0$ if and only if there exists a pair of appropriate points $\bm{P}=(P_1, P_2) \in E(\overline{\Q})^2$ such that 
\[
[v_1]P_1\pm [v_2]P_2=O \text{ and } x\left([v_1]P_1\mp [v_2]P_2\right)=t.
\]
\end{lemma}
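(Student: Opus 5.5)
We will prove \Cref{zero condition} by making the product structure of $R_1$ and $R_2$ explicit and reading off which factors vanish. Fix $(x_1,x_2)\in\overline{\Q}^2$ with $x_1\neq x_2$. For $\epsilon_1,\epsilon_2\in\{\pm1\}$, write $y_i^{\epsilon_i}=\epsilon_i\sqrt{x_i^3+Ax_i+B}$ and $P_i^{\epsilon}=(x_i,y_i^{\epsilon_i})$.

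\textbf{Step 1: factor $R_1$ and $R_2$.} Applying \Cref{resultant properties}(1) twice, exactly as in the display preceding \Cref{R1 R2}, gives
\[
R_1(x_1,x_2)=\prod_{\epsilon_1,\epsilon_2}\Psi^{*2}_{\bm v}(x_1,x_2,y_1^{\epsilon_1},y_2^{\epsilon_2}),
\qquad
R_2(x_1,x_2)=\prod_{\epsilon_1,\epsilon_2}\bigl(\Phi^{*}_{\bm v}-t\Psi^{*2}_{\bm v}\bigr)(x_1,x_2,y_1^{\epsilon_1},y_2^{\epsilon_2}).
\]
Hence $R_1=R_2=0$ holds exactly when there are sign patterns $\epsilon$ and $\epsilon'$ with $\Psi^*_{\bm v}(P^\epsilon)=0$ and $(\Phi^*_{\bm v}-t\Psi^{*2}_{\bm v})(P^{\epsilon'})=0$.

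\textbf{Step 2: interpret each factor.} Since $x_1\ne x_2$, the factor $(x_2-x_1)$ is a unit, so we may pass from $\Psi^*_{\bm v},\Phi^*_{\bm v}$ back to $\Psi_{\bm v},\Phi_{\bm v}$. Changing $y_i\mapsto -y_i$ is the same as replacing $P_i$ by $-P_i$, which equals evaluating at $\bm v'=(\pm v_1,\pm v_2)$ on $(P_1,P_2)$.

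\begin{itemize}
\item By \Cref{net polynomial vanish}, a $\Psi$-factor vanishes if and only if $[\pm v_1]P_1+[\pm v_2]P_2=O$. Up to overall sign, which is harmless since $\Psi$ is odd (\Cref{odd net}), this means $[v_1]P_1\pm[v_2]P_2=O$.
\item If $\Psi_{\bm v}(P^{\epsilon'})\ne0$, then a $\Phi-t\Psi^2$ factor vanishes if and only if $x(\bm v\cdot P^{\epsilon'})=t$. Since $x$ is even, this is $x([v_1]P_1\pm[v_2]P_2)=t$.
\end{itemize}

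Now pair the conditions. Choose $P_1=P_1^+$, and choose the sign of $P_2$ so that the $\Psi$-factor vanishing is $[v_1]P_1+[v_2]P_2=O$, or with the other sign convention. We must show that the vanishing $\Phi-t\Psi^2$ factor then comes from the opposite sign pattern, giving $x([v_1]P_1\mp[v_2]P_2)=t$. This is where the earlier proposition enters: for appropriate non-singular points, $\Psi_{\bm v}$ and $\Phi_{\bm v}$ cannot vanish simultaneously. So on the sign pattern where $\Psi$ vanishes, $\Phi-t\Psi^2=\Phi\neq0$, and the vanishing factor must be one where $\Psi\neq0$. Patterns related by a global sign give the same value, so only the two classes $\pm$ remain, and the $\Phi$-factor must come from the other class. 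The converse direction is immediate from the factorisation, by choosing the lifts $y_i$ corresponding to $P_i$.

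\textbf{Expected obstacle.} The main difficulty is making sure the points obtained are \emph{appropriate} and that the factorisation is valid at the chosen $(x_1,x_2)$.
\begin{itemize}
\item The case $x_1=x_2$, where the unit argument fails, must be excluded separately.
\item The case where some $P_i$ is singular, or $2P_i=O$ (so $y_i=0$ and the two sign classes collapse), must be handled separately, presumably by including these requirements in the statement's meaning of ``appropriate''.
\item We must check that reducing modulo $\langle f_1,f_2\rangle$ does not change the resultants.
\end{itemize}
I would handle these degenerate loci by noting that on them the two sign classes coincide or the objects are not defined, and restrict the equivalence to the complement.
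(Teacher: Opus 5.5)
Your proposal is correct and follows the same route as the paper. You factor the iterated resultants into the four sign-pattern evaluations via the root-product formula for resultants, then read off each factor using Stange's vanishing criterion (\Cref{net polynomial vanish}) and $x(\bm{v}\cdot\bm{P})=\Phi_{\bm{v}}/\Psi_{\bm{v}}^2$. Your appeal to the non-simultaneous vanishing of $\Psi_{\bm{v}}$ and $\Phi_{\bm{v}}$ to force the $\pm/\mp$ pairing, and your exclusion of the loci $x_1=x_2$ and singular points (which the paper treats separately in \Cref{Shape}), make explicit steps that the paper's summary leaves implicit.
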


In the next section, when we take resultant of $R_1$ and $R_2$, we will choose a specific value for $t$ to eliminate this problem. Another technicality to be emphasised here is that we must assume $P_1\neq \pm P_2$, which is part of the definition of appropriate points (see \Cref{appropriate}). By construction, $\Psi_{\bm{v}}^{*2}$ and $\Phi_{\bm{v}}^{*}$ might contain factors of $(x_2-x_1)$. This also implies that if we take resultant of $R_1(x_1, x_2)$ and $R_2(x_1, x_2)$ with respect to $x_1$ and $x_2$, we are very likely to get $0$ trivially, which is not a vanishing condition we want. To avoid this problem, we perform change of variable
\[
x_1 \mapsto -X+x_2, \quad x_2 \mapsto x_2,
\]
then despite $R_1(X, x_2)$ and $R_2(X, x_2)$ being possibly divisible by powers of $X$, if we take resultant of $R_1(X, x_2)$ and $R_2(X, x_2)$ with respect to $x_2$, we no longer get trivial zeros.

%% file: Main_result.tex
\section{Proof of \Cref{main result}}
Throughout this section, unless specified or otherwise, we let $K$ be a number field and $\OO_K$ be its ring of integers. 
\subsection{Irreducible factors of the resultant}
To prove our result, we follow the strategy in \cite[Lemma 2]{AyadA}. That is, we wish to obtain a formula for $\Res_{x_2}(R_1, R_2)(X)$. We first prove that, modulo any odd prime, any common irreducible factor of $R_1(x_1, x_2)$ and $R_2(x_1, x_2)$ can only be $x_2-x_1$ possibly. 

\begin{lemma}\label{algebraic geometry}
For any odd prime $\theta$, $\Psi_{\bm{v}}^{*2}$ and $\Phi_{\bm{v}}^{*}$ do not share any common factor as elements in $\Z[x_1, x_2, y_1, y_2, A, B]/\langle \theta, f_1, f_2\rangle$, except $(x_2-x_1)$ possibly, where $f_i=y_i^2-x_i^3-Ax_i-B$ for $i=1, 2$. 
\end{lemma}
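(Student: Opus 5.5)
The plan is to argue geometrically over the algebraic closure of $\F_\theta$, treating $A,B$ as indeterminates or, equivalently, working generically over the variety cut out by $\langle \theta, f_1, f_2\rangle$. Suppose $h$ is an irreducible common factor of $\Psi_{\bm{v}}^{*2}$ and $\Phi_{\bm{v}}^{*}$ in $\F_\theta[x_1,x_2,y_1,y_2,A,B]/\langle f_1,f_2\rangle$, not associate to $x_2-x_1$. Its zero locus $Z(h)$ is then a hypersurface in the total space $\mathcal{E}^2$, the fibred product over the $(A,B)$-plane of the Weierstrass family with itself. Since $h$ is not a power of $x_2-x_1$, the generic point of $Z(h)$ does not lie on $x_1=x_2$, so $\Psi^*_{\bm v}$ and $\Phi^*_{\bm v}$ agree with $\Psi_{\bm v}$ and $\Phi_{\bm v}$ up to a unit there. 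I will derive a contradiction by showing that the common zero locus of $\Psi_{\bm v}$ and $\Phi_{\bm v}$, off $x_1=x_2$, has codimension at least $2$.

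\textbf{Step 1: split by whether the fibre is smooth.} First consider points $(A,B,P_1,P_2)$ with $\Delta(A,B)\neq 0$ in characteristic $\theta$. Here I use that $\theta$ is odd, so the short Weierstrass form is still a genuine elliptic curve and the group law is valid. If in addition $P_1,P_2$ are appropriate, the second proposition of Section 3 (non-simultaneous vanishing for appropriate non-singular points, via \Cref{net polynomial vanish} and \Cref{net poly}\ref{net poly b}) shows $\Psi_{\bm v}$ and $\Phi_{\bm v}$ do not both vanish. The non-appropriate loci on a smooth fibre are $P_i=O$ (at infinity, hence not in the affine chart), $[2]P_i=O$ (i.e.\ $y_i=0$) and $P_1=\pm P_2$ (contained in $x_1=x_2$). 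The condition $y_i=0$ cuts a proper subvariety, which I handle by a direct check: $\Psi_{2\bm e_i}=2y_i$ vanishes there, but the net values at $\bm v$ are still described by the group law on $E_{\mathrm{ns}}$. So the common zero set inside $\Delta\neq0$ is contained in $\{x_1=x_2\}$ together with lower-dimensional pieces. Since $Z(h)$ is a hypersurface, its generic point cannot lie there.

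\textbf{Step 2: the singular fibres.} It remains to rule out $Z(h)\subseteq\{\Delta=0\}$. The locus $\Delta=0$ is itself a hypersurface in the $(A,B)$-plane, pulled back to $\mathcal{E}^2$, so this is the dangerous case. On $\Delta=0$ the curve is nodal or cuspidal with singular point $P_*$. The cubic curve version of \Cref{net polynomial vanish}, together with the same non-vanishing argument applied to $E_{\mathrm{ns}}$ (which is a group), shows that simultaneous vanishing requires some $P_i=P_*$. As \Cref{singular vanish} confirms, this really does happen. However, the locus $\{\Delta=0,\ P_i=P_*\}$ has codimension $\geq 2$ in $\mathcal{E}^2$: it is codimension one in $(A,B)$, and in addition $P_i$ is pinned to a point. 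Hence it cannot contain the hypersurface $Z(h)$. If instead $Z(h)$ is the whole preimage of $\{\Delta=0\}$, its generic point has $P_1,P_2$ non-singular, which gives a contradiction as before.

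\textbf{Main obstacle.} The hardest step is making Step 2 and the appropriateness exceptions rigorous: showing that the common zero locus, outside $x_2=x_1$, has codimension $\geq2$ rather than merely being "small". I also need to justify that $Z(h)$ is pure of codimension one in $\mathcal{E}^2\bmod\theta$; this requires checking that $\mathcal{E}^2$ is irreducible and that $\Psi^*_{\bm v}\not\equiv0\bmod\theta$. Failing a clean dimension count, I would fall back on a specialisation argument. Pick generic $(A,B)$ with $\Delta\neq0$ and observe that $h$ must divide both polynomials on that fibre. By Step 1 this forces $h\mid(x_2-x_1)^k$ generically, and I would conclude by irreducibility of $h$.
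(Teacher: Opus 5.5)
Your argument is essentially the paper's: away from $x_1=x_2$, a common zero forces one of $P_1,P_2$ to be the singular point of its fibre, and that locus has dimension at most $2$ inside the $4$-dimensional $\V(f_1,f_2)$, so it cannot contain the $3$-dimensional zero set of a common factor. The paper obtains the ``forces a singular point'' step in one stroke from Akbary's criterion, whereas you split into smooth and singular fibres and use Stange's vanishing criterion together with the non-vanishing proposition of Section 3. One correction: $\{y_i=0\}$ is itself a hypersurface in $\V(f_1,f_2)$, not a lower-dimensional piece, so your check that the vanishing criterion still holds at $2$-torsion points is essential and not a minor tidy-up (in the paper this is covered by Akbary's criterion, which has no $[2]P_i\neq O$ hypothesis).
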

\begin{proof}
Suppose $F(x_1, x_2, y_1, y_2, A, B)\neq x_2-x_1$ is an irreducible common factor of $\Psi_{\bm{v}}^{*2}$ and $\Phi_{\bm{v}}^{*}$ in the quotient ring $\overline{\F_{\theta}}[x_1, x_2, y_1, y_2, A, B]/\langle f_1, f_2 \rangle$. If $P_1=(x_1, y_1), P_2=(x_2, y_2)$, then $f_1(P_1)=f_2(P_2)=0$.  Since we assumed $F$ is an irreducible common factor not associated to $x_2-x_1$, we may choose a point $(x_1, x_2, y_1, y_2, A, B) \in \V(f_1, f_2, F)$ with $x_2\neq x_1$. This means we have $P_1 \neq \pm P_2$ and if $F(P_1, P_2)=0$, by \Cref{Akbary TFAE} exactly one of $P_1$ and $P_2$ is singular $\bmod \theta$. Interchanging $P_1$ and $P_2$ if necessary, we may assume $P_1$ is singular $\bmod \theta$. This gives us $\partial f_1/\partial x_1 = \partial f_1/\partial y_1 = 0$, therefore showing 
\[
\V(f_1, f_2, F) \subset \V\left(\frac{\partial f_1}{\partial x_1}, \frac{\partial f_1}{\partial y_1}, f_1, f_2\right)
\]
over $\overline{\F_{\theta}}$. 

Now consider the $6$--dimensional affine space with coordinates $\{x_1, x_2, y_1, y_2, A, B\}$ over $\bar{\F}_{\theta}$. We know that $\V(f_1, f_2)$ is $4$--dimensional, so $\V(f_1, f_2, F)$ has dimension $4$ or $3$. However, also note that 
\begin{align*}
&\quad \V\left(\frac{\partial f_1}{\partial x_1}, \frac{\partial f_1}{\partial y_1}, f_1, f_2\right) \\
&=
\{(x_1, x_2, y_1, y_2, A, B) \in \mathbb{A}^6 \colon f_1(P_1, P_2)=f_2(P_1, P_2) =\frac{\partial f_1}{\partial x_1}(P_1, P_2) = \frac{\partial f_1}{\partial y_1}(P_1, P_2)= 0\} \\
&\subseteq 
\{(x_1, x_2, y_1, y_2, A, B) \in \mathbb{A}^6 \colon  -16(4A^3+27B^2)=0, y_2^2=x_2^3+Ax_2+B, -3x_1^2-A= 0, 2y_1= 0 \}.
\end{align*}
SInce $\overline{\F_{\theta}}$ is not a characteristic $2$ field, this shows that $\V(\frac{\partial f_1}{\partial x_1}, \frac{\partial f_1}{\partial y_1}, f_1, f_2)$ has dimension less than or equal to $2$ necessarily, so it is impossible to contain $\V(f_1, f_2, F)$ as a subset. By contradiction, such $F(x_1, x_2, y_1, y_2, A, B)$ cannot exist. 
\end{proof}

This lemma tells us that after reduction modulo any odd prime $\p$, $R_1(X, x_2)$ and $R_2(X, x_2)$ have no common factor and therefore the resultant $\Res_{x_2}(R_1, R_2)(X)$ is not the zero polynomial modulo $\p$. To give a full description of all the irreducible factors of $\Res_{x_2}(R_1, R_2)(X)$, we will also need to use \Cref{singular vanish} and \Cref{zero condition}.
 
\begin{prop}\label{Shape}
Let $E$ be \textcolor{red}{a cubic} curve defined by the short Weierstrass equation (\ref{short EC}), so it has discriminant $\Delta=-16(4A^3+27B^2)$. Let $R_1, R_2$ be as in \Cref{R1 R2}. Then $\Res_{x_2}(R_1, R_2)(X)$ can be expressed as
\begin{equation*}
\Res_{x_2}(R_1, R_2)(X) = \pm X^{L_2(\bm{v})}\Delta^{L_1(\bm{v})}g_{t}(X),
\end{equation*}
where $L_i\colon \Z^2 \rightarrow \Z$ is a function in $v_1, v_2$ for $i=1, 2$, and $g_t(X) \in \Z[A, B, t][X]$ is a polynomial that \textcolor{red}{vanishes exactly on the closure of the set of $(A, B, X)$ such that $E$ is smooth and there exists} an appropriate pair of points $\bm{P}=(P_1, P_2) \in E(\overline{\Q})^2$ satisfying $x([v_1]P_1 \pm [v_2]P_2)=t$ and $[v_1]P_1 \mp [v_2]P_2=O$.
\end{prop}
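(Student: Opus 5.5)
We plan to prove \Cref{Shape} by working with $\Res_{x_2}(R_1,R_2)(X)$ as a polynomial in $\Z[A,B,t][X]$ and identifying its zero locus over $\overline{\Q}$ (and over $\overline{\F_\theta}$ for odd primes $\theta$). First, by \Cref{algebraic geometry} and the change of variables $x_1=x_2-X$, the polynomials $R_1(X,x_2)$ and $R_2(X,x_2)$ have no common factor involving $x_2$, after reduction modulo any odd prime. Hence $\Res_{x_2}(R_1,R_2)$ is not identically zero modulo any odd prime. Also, any power of $2$ in its content can be absorbed into the factor $\Delta^{L_1(\bm{v})}$, since $16\mid\Delta$, or into $g_t$. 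By \Cref{resultant properties}(1), a specialisation $(A,B,t,X)$ is a zero of the resultant if and only if either the leading coefficients in $x_2$ of $R_1$ and $R_2$ both vanish, or there is a common root $x_2$ of $R_1(X,\cdot)$ and $R_2(X,\cdot)$. We will show that the leading coefficients in $x_2$ are, up to sign, products of powers of $X$ and integers; we expect to get this from the weight homogeneity (\ref{homothety}) together with \Cref{resultant properties}(2).

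We then sort the common zeros into three types and factor accordingly. (i) Zeros with $X=0$, that is $x_1=x_2$. These come from the factors $(x_2-x_1)$ introduced in (\ref{new poly}), and they contribute the factor $X^{L_2(\bm{v})}$. (ii) Zeros with $\Delta=0$. By \Cref{singular vanish}, on a singular curve $\Phi_{\bm{v}}$ and $\Psi_{\bm{v}}$, and hence $R_1$ and $R_2$, vanish simultaneously whenever $P_i=P_*$ (with $|v_i|\ge 2$, or the symmetric cases). So the whole hypersurface $\{\Delta=0\}$ lies in the zero locus. Since $\Delta$ is irreducible in $\Z[A,B]$ up to the factor $16$, some power $\Delta^{L_1(\bm{v})}$ divides the resultant, and we take $L_1$ maximal. (iii) Zeros with $\Delta\neq 0$ and $X\neq 0$. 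Here the curve is smooth and $x_1\neq x_2$, so $P_1\neq\pm P_2$. Taking the points non-torsion generically, they are appropriate, and \Cref{zero condition} identifies these zeros exactly with the set in the statement.

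Let $g_t$ be the cofactor after removing $X^{L_2}\Delta^{L_1}$. It remains to check that $g_t$ has no further zeros beyond the closure of set (iii). Any zero of $g_t$ off $\{X\Delta=0\}$ is a common root, so it is of type (iii). Zeros of $g_t$ on $\{X\Delta=0\}$ are automatically in the closure, once we note that $g_t$ is not divisible by $X$ or $\Delta$ (by maximality of $L_1$ and $L_2$). For the converse, every point of set (iii) is a zero of the resultant, so it is a zero of $g_t$; the zero set of $g_t$ is closed, so it contains the closure.

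The main obstacle is step (ii), together with the leading-coefficient analysis. We must ensure that the only irreducible factors of $\Res_{x_2}(R_1,R_2)$ supported on the singular locus are powers of $\Delta$. In particular, we must rule out factors depending on $X$ and $t$ whose zero sets lie inside $\Delta=0$, which would spoil the clean $\Delta^{L_1}$ shape. We would try to handle this by arguing that on $\Delta=0$ the vanishing from \Cref{singular vanish} holds for all $X$ and $t$, so the restriction to $\{\Delta=0\}$ is identically zero. Then any irreducible factor vanishing only on part of $\Delta=0$ must still be a multiple of $\Delta$, and we can divide by $\Delta$ repeatedly.
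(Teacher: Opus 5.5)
Your outline is essentially the paper's proof. It uses the same three sources of vanishing: $X=0$ from the powers of $x_2-x_1$ in (\ref{new poly}), $\Delta=0$ from \Cref{singular vanish}, and the remaining locus from \Cref{zero condition}. It then extracts $X^{L_2(\bm v)}\Delta^{L_1(\bm v)}$ in the same way and gives the same closure argument for $\V(g_t)$. For odd primes in the content you use \Cref{algebraic geometry}, which rests on \Cref{Akbary TFAE} just as the paper's argument for the constant does. Two points need correcting.

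\textbf{The ``main obstacle'' is not real.} Over $\overline{\Q}$ the polynomial $4A^3+27B^2$ is irreducible, so $\{\Delta=0\}$ is an irreducible hypersurface in the ambient affine space. The zero set of any non-constant irreducible factor of the resultant is also a hypersurface. If it lies inside $\{\Delta=0\}$ it equals it, and by the Nullstellensatz that factor is associate to $4A^3+27B^2$. So there are no factors ``vanishing on part of $\{\Delta=0\}$'', and $L_1$ is simply read off from unique factorisation. This is exactly the paper's remark that no irreducible component of $\V(g_t)$ lies in $Z_1\cup Z_2$, and you already rely on it in your closure paragraph. Your suggested justification, that the restriction to $\{\Delta=0\}$ is identically zero, only ever yields a single factor of $\Delta$.

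\textbf{Powers of $2$ cannot be ``absorbed into $\Delta^{L_1}$''.} $L_1$ is forced to be the exact exponent of $4A^3+27B^2$; otherwise $g_t$ would vanish on all of $\{\Delta=0\}$, which is not contained in the closure of the smooth locus. So the factor $(-16)^{L_1}$ is prescribed, not adjustable. Putting leftover constants into $g_t$ does preserve the zero-locus assertion over $\overline{\Q}$. However, it makes the $\pm$ vacuous and conflicts with how $g_t$ is used next: \Cref{monic} says $g_t$ is monic in $X$, and \Cref{separate} needs $w(g_t(X))=0$. The paper instead argues that the constant is $\pm1$, handling $p=2$ by rerunning the argument on a general Weierstrass model.

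\textbf{Leading coefficients.} You are right that ``the resultant vanishes iff $R_1(X,\cdot)$ and $R_2(X,\cdot)$ have a common root'' requires that the leading $x_2$--coefficients do not vanish simultaneously away from $X=0$. The paper passes over this silently. But weight homogeneity will not give your claim that these coefficients are integers times powers of $X$. It only says the coefficient of $x_2^d$ is weighted homogeneous of weight $w-2d$ in $(X,A,B,t)$, which still permits terms such as $AX^{k-2}$ or $tX^{k-1}$. Also, \Cref{resultant properties}(2) concerns the weight of a resultant, not the leading terms of its arguments. You would need an actual computation of the top-degree part, for instance by expanding as $P_1,P_2\to O$ with $x_2-x_1=X$ held fixed.
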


\begin{proof}
From previous sections, the resultant $\Res_{x_2}(R_1, R_2)(X)$ vanishes if and only if at least one of the following conditions holds:
\begin{enumerate}
\item \label{cond 1}There exists a pair of points $\bm{P}=(P_1, P_2) \in E(\overline{\Q})^2$ such that $X=x_2-x_1=0$, corresponding to $P_1=\pm P_2$. This follows from the definitions of $\Psi_{\bm{v}}^{*}$ and $\Phi_{\bm{v}}^{*}$ in (\ref{new poly}).
\item \label{cond 2}The curve $E\colon y^2=x^3+Ax+B$ is singular, or equivalently $\Delta=-16(4A^3+27B^2)=0$, as shown in \Cref{singular vanish}.
\item \label{cond 3}There exists a pair of appropriate points $\bm{P}=(P_1, P_2) \in E(\overline{\Q})^2$, we have $x([v_1]P_1 \pm [v_2]P_2)=t$ and $[v_1]P_1 \mp [v_2]P_2=O$. This follows from \Cref{zero condition}.
\end{enumerate}

\textcolor{red}{Now consider the ambient polynomial ring $\overline{\Q}[X, A, B]$ and the ambient space $\mathbb{A}_{\overline{\Q}}^3=\{(X, A, B)\colon X, A, B \in \overline{\Q}\}$. Conditions (\ref{cond 1}) and (\ref{cond 2}) define the Zariski closed sets $Z_1=\V(X)$ and $Z_2=\V(\Delta)$ respectively. Let $S$ denote the locus described by condition (\ref{cond 3}) on $X\Delta\neq0$. After extracting all powers of $X$ and $\Delta$ from $\Res_{x_2}(R_1, R_2)(X)$, denote the remaining factor by $g_t(X)$. By construction, $\V(g_t)$ is precisely the Zariski closure of $S$, since $S=\V(g_t)\setminus(Z_1\cup Z_2)$ and no irreducible component of $\V(g_t)$ can be contained in $Z_1\cup Z_2$.} Finally, \Cref{algebraic geometry} guarantees that there are no more other possible irreducible factors. Therefore, we have shown that 
\[
\Res_{x_2}(R_1, R_2)(X) = CX^{L_2(\bm{v})}\Delta^{L_1(\bm{v})}g_{t}(X), C\in \Z.
\]
Using the argument from \cite[Lemma 1.7.11(b)]{Connell} (or \cite{hindry_cassels} ), we claim that $C=\pm 1$. Suppose $C$ has an odd prime factor $p$, then $\Res_{x_2}(R_1, R_2)(X)$ vanishes identically $0$ in characteristic $p$. By \Cref{Akbary TFAE}, this forces $4A^3+27B^2$ to be a zero polynomial in $\F_p[A, B]$, which is clearly false. 

If $p=2$, the short Weierstrass equation (\ref{short EC}) is not an appropriate model, since every cubic curves of such form is singular in characteristic $2$. Instead, we apply the same argument to an integral general Weierstrass equation 
\[
E\colon  y^2+a_1xy+a_3y=x^3+a_2x^2+a_4x+a_6, a_i \in \OO_K.
\]
The discriminant of the general Weierstrass equation cannot be a zero polynomial in characteristic $2$, otherwise all elliptic curves are singular in in characteristic $2$ (for example, $y^2+xy=x^3+1$ defines a nonsingular cubic over $\F_2$). Hence, the same argument rules out $2\mid C$ and we conclude that $C=\pm1$.
\end{proof}

\begin{remark}
Although the proposition is stated for the short Weierstrass model, the definitions of $\Psi_{\bm{v}}^{*}$ and $\Phi_{\bm{v}}^{*}$, as well as the construction of the resultant in the previous sections, do not depend on the choice of the short Weierstrass model. Hence, there is no compatibility issue in treating the case $p=2$ using an integral general Weierstrass equation.
\end{remark}

\subsection{Valuations of net polynomials and the discriminant}
By the definition of the resultant and \Cref{Shape}, there exists $a(X, x_2), b(X, x_2) \in \Z[A, B][X, x_2]$ such that 
\[
R_1(X, x_2)a(X, x_2)+R_2(X, x_2)b(X, x_2)=\pm X^{L_2(\bm{v})}\Delta^{L_1(\bm{v})}g_t(X).
\]
Let $\p$ be a prime in $K$ and $\nu$ be the associated valuation. If we evaluate them at a specific pair $(X^{\circ}, x_2^{\circ}) \in K^2$, then we know $\nu\left(R_1(X^{\circ}, x_2^{\circ})\right), \nu\left(R_2(X^{\circ}, x_2^{\circ})\right)>0$ implies that the $\p$--valuation of $X^{L_2(\bm{v})}\Delta^{L_1(\bm{v})}g_t(X)$ has to be positive as well. 

We have assumed that $P_1\not\equiv \pm P_2 \bmod \p$, so $X^{\circ}\not\equiv 0 \bmod \p$. As a result, we expect that the discriminant $\Delta$ is the only factor among the irreducible factors of the resultant that detects when $\Psi_{\bm{v}}^2(\bm{P})$ and $\Phi_{\bm{v}}(\bm{P})$ both have positive $\p$--valuation at primes of bad reduction. Therefore, the goal of this subsection is to show that $g(X^{\circ})$ and $\Delta$ cannot both have positive $\p$--valuation simultaneously at primes of bad reduction, or equivalently primes that divide both $\Psi_{\bm{v}}^2(\bm{P})$ and $\Phi_{\bm{v}}(\bm{P})$ . 

As stated in the previous section, our method for showing this is to choose $t \in \overline{\Z}$ carefully such that the polynomial $g_{t}(X)$ does not have a root over $\overline{F_{\p}}$. For this reason, we will first need a version of \Cref{Shape} for algebraic closures of finite fields. 

\begin{lemma}\label{monic}
The polynomial $g_{t}(X) \in \Z[A, B, t][X]$ is a monic polynomial in the variable $X$.
\end{lemma}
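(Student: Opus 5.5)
We aim to show that the coefficient of the top power of $X$ in $g_t(X)$ is $\pm1$; after normalising the sign in front of the factorisation of \Cref{Shape}, $g_t$ is then monic. Write $\Res_{x_2}(R_1,R_2)(X)=\pm X^{L_2(\bm v)}\Delta^{L_1(\bm v)}g_t(X)$. Since $X^{L_2(\bm v)}$ is monic in $X$ and $\Delta$ does not involve $X$, the leading coefficient of $g_t$ in $X$ equals, up to sign, the leading coefficient $c(A,B,t)$ of the resultant in $X$, divided by $\Delta^{L_1(\bm v)}$. So it suffices to compute $c$ and compare it with $\Delta^{L_1(\bm v)}$.

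The first step is to use weights. By (\ref{homothety}), after the substitution $x_1=x_2-X$, the polynomials $\Psi^{*2}_{\bm v}$ and $\Phi^*_{\bm v}-t\Psi^{*2}_{\bm v}$ are weighted homogeneous once $X,x_2$ have weight $2$, $y_i$ weight $3$, $A$ weight $4$, $B$ weight $6$, and $t$ weight $2$. Taking resultants in $y_1,y_2$ against $f_1,f_2$ preserves weighted homogeneity, so $R_1,R_2$ are weighted homogeneous in $(X,x_2,A,B,t)$. By \Cref{resultant properties}(2), $\Res_{x_2}(R_1,R_2)$ is then weighted homogeneous in $(X,A,B,t)$. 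Consequently the top-$X$-degree part of the resultant is obtained by setting $A=B=t=0$, up to the constant $c$.

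The second step is to specialise $A=B=0$ and $t=0$, that is, to pass to the cuspidal curve $y^2=x^3$. Here the net polynomials can be computed explicitly through the parametrisation $E_{ns}\cong\mathbb G_a$, $(x,y)\mapsto x/y$. This should give $\Psi_{\bm v}$ and $\Phi_{\bm v}$ as explicit integer polynomials in $u_i=x_i/y_i$, with $\bm v\cdot\bm P$ corresponding to $v_1u_1+v_2u_2$. From this, the leading form of $R_1$ and $R_2$ in $x_2$ can be read off, and hence the top coefficient of the resultant in $X$, using the product formula of \Cref{resultant properties}(1).

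The main obstacle is this last point. At $A=B=0$ we have $\Delta=0$, so specialising $A=B=0$ directly kills $\Delta^{L_1(\bm v)}$, and the comparison breaks down. My fallback is an alternative argument by reduction. Suppose the leading coefficient $\ell(A,B,t)$ of $g_t$ is nonconstant, or is divisible by a prime $p$. Choose, over $\overline{\F_p}$ (or over $\overline{\Q}$), a specialisation with $\Delta\neq0$ at which $\ell$ vanishes. Then a root $X$ of $g_t$ escapes to infinity, which by \Cref{Shape} corresponds to an appropriate pair $(P_1,P_2)$ on a smooth curve with $x_2-x_1=\infty$, i.e.\ $P_i=O$. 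This contradicts appropriateness, since points of $E(\overline{\Q})$ other than $O$ have finite $x$-coordinates. Making this rigorous requires a properness/valuative argument: take a valuation centred at the bad specialisation and show that the points $P_1,P_2$ defining the roots of $g_t$ stay integral. I expect this to be the delicate part. For primes $p=2$ I would, as in \Cref{Shape}, work with a general Weierstrass model.
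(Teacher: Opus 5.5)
Your first strategy cannot succeed, for the reason you identify. Since $\Delta^{L_1(\bm v)}$ divides the resultant, the resultant vanishes identically at $A=B=0$. Weighted homogeneity therefore only shows that the leading coefficient $\ell(A,B,t)$ of $g_t$ is weighted homogeneous, not that it is $\pm1$. Everything rests on the fallback, and there the decisive step is not valid. A root $X$ of $g_t$ escaping to infinity is a limiting phenomenon. At every nearby specialisation the corresponding pair $(P_1,P_2)$ is appropriate and has finite coordinates; only the limit has $P_1=O$ or $P_2=O$. Appropriateness ($P_i\neq O$, $[2]P_i\neq O$, $P_1\neq\pm P_2$) is an open condition, so a family of appropriate pairs can degenerate to a pair containing $O$. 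Hence ``$P_i=O$ contradicts appropriateness'' is no contradiction at all. You then postpone exactly the needed input (``show that the points $P_1,P_2$ stay integral'') without saying why it holds.

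The missing idea, which is the whole content of the paper's proof, is that the finiteness of $t$, pushed through the two defining relations, is what blocks the degeneration. The relations $[v_1]P_1\mp[v_2]P_2=O$ and $x([v_1]P_1\pm[v_2]P_2)=t$ give $t=x([2v_1]P_1)=x([2v_2]P_2)$. Now take $A,B,t$ algebraic integers and a root $X$ of $g_t$. If $X$ is not integral at some prime $\q$, with valuation $w$, then one of $x_1,x_2$ is not integral. Say $P_1$ lies in the kernel of reduction $\{P\colon w(x(P))<0\}\cup\{O\}$. This set is a subgroup for any integral Weierstrass model, whatever the reduction type. So $[2v_1]P_1$ lies in it, and $t$ is not integral, a contradiction; the case of $P_2$ is symmetric using $t=x([2v_2]P_2)$. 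This is how the paper shows that roots of $g_t$ are integral, and hence that $g_t$ is monic.

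Note also that restricting to specialisations with $\Delta\neq0$ could never detect a leading coefficient of the form $c\,(4A^3+27B^2)^k$, whose zero locus lies inside $\{\Delta=0\}$. The valuation-by-valuation integrality argument covers this case because it allows $\q\mid\Delta$.
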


\begin{proof}
It suffices to prove that if $t, A, B$ are all algebraic integers and $X$ is a root of $g_t(X)$ in $\C$, then $X$ is an algebraic integer. We will prove by contradiction, so suppose $X$ is not an algebraic integer, then there exists a prime $\q \in \OO_{K(\bm{P})}$ such that $X \equiv \infty \bmod \q$, where $K(\bm{P})$ is a finite extension of $K$ formed by adjoining the coordinates of $P_1, P_2 \in E(\overline{\Q})$. Without loss of generality, this implies that $P_1 \equiv \infty \bmod \q$. From (\ref{zero condition}), if $X$ is a root of $g_t(X)$, then we know there are $P_1, P_2 \in E(\overline{\Q})$ that satisfy 
\[
x\left(\pm[v_1]P_1\pm[v_2]P_2\right)=t, \text{ and } \pm[v_1]P_1\mp[v_2]P_2=O.
\]
Together with $P_1 \equiv \infty \bmod \q$, this implies that $t\equiv x\left([v_1]P_1+[v_2]P_2\right)\equiv \infty \bmod \q$, so $t$ cannot be an algebraic integer. 
\end{proof}

\begin{prop}[A local version of \Cref{Shape}]\label{local shape}
Let $\q$ be a prime lying above $\p$. For all $\overline{A}, \overline{B}, \in \F_{\p}$ and $\overline{X}$ , $\overline{t} \in \overline{\F_{\p}}$, we have $g_{\overline{t}}(\overline{X})=0$ if and only if there exists a pair of points $P_1, P_2 \in E(\overline{Q})$ such that $(\overline{A}, \overline{B}, \overline{P_1}, \overline{P_2})$ is in the Zariski closure of the union of the sets below.
\begin{equation}\label{local congruence}
\begin{aligned}
C_1&=\{\Delta(E)\not\equiv 0 \bmod \q\}, \\
C_2&=\{[v_1]P_1\pm[v_2]P_2\equiv O \bmod \q\}, \text{ and }\\
C_3&=\{x\left([v_1]P_1\mp[v_2]P_2\right)\equiv \overline{t} \bmod \q\}.
\end{aligned}
\end{equation}
\end{prop}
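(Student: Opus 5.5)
The plan is to reduce the global description of $g_t$ in \Cref{Shape} modulo $\q$. Monicity (\Cref{monic}) makes this reduction well behaved, and the lemma \Cref{algebraic geometry} supplies the characteristic-$\theta$ input.

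\textbf{Reduction and the forward direction.} By \Cref{monic}, $g_t(X)\in\Z[A,B,t][X]$ is monic in $X$. Hence its reduction $\overline{g}_{\overline t}(X)\in\F_\p[A,B,\overline t][X]$ is again monic of the same degree, so it is not identically zero. Moreover its roots in $\overline{\F_\p}$ are exactly the reductions of the roots of $g_t$ over $\overline{\Z}$, taken after lifting $(\overline A,\overline B,\overline t)$ to algebraic integers $(A,B,t)$. For the direction ``$\Leftarrow$'', I would take a point of the closure of $C_1\cap C_2\cap C_3$ and lift it. By the description in \Cref{Shape}, a global pair $(P_1,P_2)$ with smooth $E$, $[v_1]P_1\pm[v_2]P_2=O$ and $x([v_1]P_1\mp[v_2]P_2)=t$ gives a root $X=x_2-x_1$ of $g_t$. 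Reducing modulo $\q$ gives $g_{\overline t}(\overline X)=0$. Since $\V(\overline g_{\overline t})$ is Zariski closed, the statement passes to the closure.

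\textbf{Converse direction.} Let $g_{\overline t}(\overline X)=0$. Lift $\overline A,\overline B,\overline t$ to algebraic integers and factor $g_t(X)=\prod(X-\alpha_j)$ over $\overline{\Z}$; each $\alpha_j$ is integral by \Cref{monic}. Then $\overline X\equiv\alpha_j \bmod \q$ for some $j$. By \Cref{Shape}, each $\alpha_j$ arises from a pair of appropriate points on a smooth curve satisfying the two conditions, at least after a generic perturbation of $(A,B)$, which is why the closure appears. Reducing those relations modulo $\q$ places $(\overline A,\overline B,\overline{P_1},\overline{P_2})$ in the closure of $C_2\cap C_3$ together with $C_1$. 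For the $C_1$ part, I would argue generically: the locus where $\Delta\equiv0$ is a proper closed subset. By the dimension count in \Cref{algebraic geometry}, $\overline g$ has no component supported inside $\V(\overline\Delta)$ for odd $\p$. So every root lies in the closure of its smooth part, and for $\p\mid 2$ one uses the general Weierstrass model exactly as in \Cref{Shape}.

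\textbf{Main obstacle.} The key step is showing that reduction commutes with taking Zariski closure, that is, that no component of $\V(\overline g_{\overline t})$ lies entirely in $\V(\overline X)\cup\V(\overline\Delta)$. I would attack this by combining the $C=\pm1$ argument from \Cref{Shape}, which shows that $X^{L_2}\Delta^{L_1}$ absorbs all such factors integrally, with the statement of \Cref{algebraic geometry} in characteristic $\theta$. The goal is to conclude that $\overline g$ is coprime to $\overline X\overline\Delta$ in $\F_\p[A,B,\overline t][X]$.
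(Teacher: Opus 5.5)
For the direction the paper actually proves, and the only one used later in \Cref{separate} (namely that $g_{\overline t}(\overline X)=0$ implies the existence of the pair), your argument is the paper's. You lift $(\overline A,\overline B,\overline t)$ to algebraic integers, factor the monic $g_t$ over $\overline{\Z}$ using \Cref{monic}, take the root $\alpha_j\equiv\overline X \bmod \q$, apply \Cref{Shape}, and reduce. Your direct factorisation $g_t=\prod_j(X-\alpha_j)$ is cleaner than the paper's shift by $Z$, and it also covers $\overline X\in\overline{\F_\p}$.

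Your ``generic perturbation'' is better phrased as a generic choice of lift. If $\Delta(A,B)\neq0$ and $g_t(0)\neq0$ at the lift (possible since $X\nmid g_t$ and the lifts are Zariski dense), then $\alpha_j\neq0$. The point then already lies in $\V(g_t)\setminus(\V(X)\cup\V(\Delta))$, which \Cref{Shape} identifies with the good locus.

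Working pointwise like this, you never have to confront the ``main obstacle'' you isolate for this direction. The tools you propose for it would not deliver it anyway. \Cref{algebraic geometry} concerns common factors of $\Psi_{\bm{v}}^{*2}$ and $\Phi_{\bm{v}}^{*}$, not factors of $g_t$. The $C=\pm1$ argument concerns the content of the whole resultant, which does not rule out $\overline X\mid\overline g$.

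For odd $\p$, the fact that no component of $\V(\overline g)$ lies in $\V(\overline\Delta)$ follows from monicity alone. Such a component would be cut out by a nonconstant factor of $\overline\Delta\in\F_\p[A,B]$, and that cannot divide a polynomial monic in $X$. However, this density statement lives in $(A,B,t,X)$-space. It does not by itself show that the tuple $(\overline A,\overline B,\overline{P_1},\overline{P_2})$ you obtain lies in the closure of $C_1\cap C_2\cap C_3$. (The paper never checks $C_1$: with the union as literally written, that closure is the whole space.)

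The genuine gap is in your direction ``$\Leftarrow$'', which the paper does not prove at all. ``Take a point of the closure and lift it'' is the entire difficulty. A point of $C_1\cap C_2\cap C_3$ satisfies the relations only modulo $\q$, and nothing forces the reduced pair to be appropriate. \Cref{Shape}, by contrast, speaks only about appropriate pairs satisfying the relations exactly. In fact, without an appropriateness hypothesis on the reduced pair, this direction is false:
\begin{itemize}
\item Take $\bm{v}=(1,1)$. The relation $P_1\pm P_2=O$ forces $P_1=\mp P_2$, so there are no appropriate pairs at all.
\item Concretely, $\Psi^{*}_{(1,1)}=x_2-x_1$, so $R_1=X^8$ is constant in $x_2$. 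Then $\Res_{x_2}(R_1,R_2)$ is a power of $X$ and $g_t=\pm1$.
\item Yet take a curve with good reduction at $\q$ and points $P_1\neq P_2$ with $\overline{P_1}=\overline{P_2}$, $[2]\overline{P_1}\neq\overline O$ and $\overline t=x([2]\overline{P_1})$. With the lower signs, $P_1-P_2\equiv O$ and $x(P_1+P_2)\equiv\overline t$, so this gives a point of $C_1\cap C_2\cap C_3$.
\end{itemize}

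What you can prove is the version where $\overline\Delta\neq0$ and the reduced pair is appropriate over $\overline{\F_\p}$, and there no lifting is needed. The integral net polynomials specialise to Stange's net polynomials over $\overline{\F_\p}$. So by \Cref{net polynomial vanish} in characteristic $p$, the value $\overline{x_2}$ is a common root of $\overline{R_1}(\overline X,\cdot)$ and $\overline{R_2}(\overline X,\cdot)$. The Sylvester determinant taken with formal degrees commutes with reduction. It also vanishes at any common root, since the vector of powers of $\overline{x_2}$ lies in its left kernel. Hence the reduced resultant vanishes at $\overline X$, and $\overline X\,\overline\Delta\neq0$ gives $g_{\overline t}(\overline X)=0$.
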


\begin{remark}
\Cref{Shape} guarantees $P_1, P_2 \in E(\overline{\Q})$ are a pair of appropriate, hence non-singular point. However, their reductions modulo $\q$ need not be. Therefore, the expressions in (\ref{local congruence}) are interpreted by first performing the group operations in characteristic zero and then reducing the resulting points or coordinates modulo $\q$.
\end{remark}

\begin{proof}
Let $\overline{A}, \overline{B}, \overline{X} \in \F_{\p}$ and $\overline{t} \in \overline{\F_{\p}}$ be such that $g_{\overline{t}}(\overline{X}) = 0$. We wish to show that there exist $A, B \in \OO_K$ and $X, t \in \overline{\Z}$ such that $A \equiv \overline{A}$, $B \equiv \overline{B}$, and $X \equiv \overline{X} \bmod \q$ for some prime ideal $\q$ lying above $\p$.

To construct $X$ and $\q$, choose $Z \in \OO_K$ such that $Z \equiv \overline{X} \bmod \p$ and $g_{t}(Z) \equiv 0 \bmod \p$. Consider the polynomial $g_t(Y + Z) \in \Z[A, B, t][Y]$. By \Cref{monic}, $g_t(Y + Z)$ is monic, so it factors over $\overline{\Q}$ as
\[
g_{t}(Y+Z) = \prod_{i=1}^{n} \left(Y - (Y_i - Z)\right),
\]
where $Y_1, \ldots, Y_n \in \overline{\Z}$ are the roots of $g_{t}(Y)$. Let $L = K(Y_1, \ldots, Y_n)$ be the splitting field over $K$, and let $\q \subset \OO_L$ be a prime ideal lying above $\p$. Evaluating $g_t(Y + Z)$ at $Y = 0$ yields
\[
g_t(Z) = (-1)^n \prod_{i=1}^{n} (Y_i - Z).
\]
Since $g_t(Z) \equiv 0 \bmod \p$, we have $g_t(Z) \in \q$, which implies $\prod_{i=1}^{n} (Y_i - Z) \in \q$ and there exists some $Y_0 \in \{Y_1, \ldots, Y_n\}$ such that $Y_0 - Z \equiv 0 \bmod \q$. Setting $X = Y_0+Z \in \overline{\Z}$, we obtain 
\[
g_{t}(X)=g_t(Y_0+Z)=g(Y_0)=0 \text{ with } X\equiv Y_0+Z \equiv Z \bmod \q.
\]
Having established suitable values $X, t \in \overline{\Z}$ and choose $A, B \in \OO_K$ such that $E/K$ is defined by a minimal integral equation, we apply \Cref{Shape} to obtain a pair of points $(P_1, P_2) \in E(\overline{\Q})^2$ satisfying 
\[
x([v_1]P_1 \pm [v_2]P_2) = t \text{ and } [v_1]P_1 \mp [v_2]P_2 = O.
\]
Reducing these relations modulo $\q$, we have found a set of $(A, B, X)$ that is contained in the Zariski closure of (\ref{local congruence}).
\end{proof}

With \Cref{local shape}, we can now state the criteria for choosing such $t \in \overline{\Z}$. 

\begin{prop}\label{separate}
Let $k\in \N$ be a natural number, $\bm{v}=(v_1, v_2)\in \Z^2$ and $\bm{P}=(P_1, P_2) \in E(\overline{\Q})^2$. Let $X=x(P_2)-x(P_1)=x_2-x_1$. Suppose the reduction of $P_1$ and $P_2$ satisfies $\overline{P_1}\neq \pm \overline{P_2}$ and $\Psi_{\bm{v}}^{*}(\bm{P})\equiv \Phi_{\bm{v}}^{*}(\bm{P}) \equiv 0 \bmod \p^k$ for a prime ideal $\p \subset \OO_K$, then there exists an algebraic integer $t \in \overline{\Z}$ such that no prime over $\p$ divides $X^{L_2(\bm{v})}g_t(X)$. 
\end{prop}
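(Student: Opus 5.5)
The plan has three parts: dispose of the factor $X^{L_2(\bm{v})}$ directly from the hypotheses, then use \Cref{local shape} to turn the non-vanishing of $g_t(X)$ modulo primes above $\p$ into a statement about reduced points, and finally choose $t$ by a counting argument over a finite field.

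\emph{The factor $X^{L_2(\bm{v})}$.} Since $\overline{P_1}\neq\pm\overline{P_2}$, we have $x_1\not\equiv x_2$ modulo every prime $\q$ above $\p$. Hence $X=x_2-x_1$ is a $\q$-adic unit and no prime over $\p$ divides $X^{L_2(\bm{v})}$. What remains is to choose $t\in\overline{\Z}$ with $g_t(X)\not\equiv 0 \bmod \q$ for every $\q\mid\p$. Because $g_t$ is monic in $X$ by \Cref{monic}, reducing modulo $\q$ is meaningful, and $\overline{g_t(X)}=g_{\overline{t}}(\overline{X})$ depends only on the residue $\overline{t}\in\overline{\F_{\p}}$.

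\emph{Reduction to a statement about reduced points.} By \Cref{local shape}, $g_{\overline{t}}(\overline{X})=0$ happens only when there are points $P_1',P_2'$ with $x(P_2')-x(P_1')\equiv\overline{X}$ for which $(\overline{A},\overline{B},\overline{P_1'},\overline{P_2'})$ lies in the Zariski closure of $C_1\cap C_2\cap C_3$. Two cases arise.

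\begin{itemize}
\item If $\overline{E}$ is nonsingular, the reduced relation $[v_1]\overline{P_1'}\pm[v_2]\overline{P_2'}=O$ forces $[v_1]\overline{P_1'}\mp[v_2]\overline{P_2'}=\pm 2[v_1]\overline{P_1'}$. Therefore $\overline{t}$ is the $x$-coordinate of one of finitely many explicit points, namely $x(2[v_1]Q)$ for $Q$ running over pairs on $\overline{E}$ compatible with the given $\overline{X}$ and the torsion relation. The set of bad residues $\overline{t}$ is then a finite subset $T\subset\overline{\F_{\p}}$, as long as the pairs $(\overline{P_1'},\overline{P_2'})$ with fixed $\overline{X}$ satisfying $[v_1]\overline{P_1'}=\mp[v_2]\overline{P_2'}$ are finite in number. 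I would check this by viewing it as an intersection of two curves on $\overline{E}^2$: the torsion condition is a translate of an elliptic curve, and $x_2-x_1=\overline{X}$ is a curve not containing it.
\item If $\p$ is a prime of bad reduction, the closure condition still pins down $\overline{t}$ as the reduction of $x$ of a point determined by the same finite data. So I again expect a finite exceptional set $T$, here of size bounded by $\deg_t$ of the relevant resultant, which is independent of $\p$.
\end{itemize}

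\emph{Choosing $t$.} Since $\overline{\F_{\p}}$ is infinite and $T$ is finite, there is a residue $\overline{t}\notin T$. Lift it to some $t\in\overline{\Z}$, for example a root of unity, or an integer of a suitable extension, so that $t\equiv\overline{t}$ modulo all primes above $\p$ in the field generated by the coordinates of $\bm{P}$. This requires that the choice avoid $T_\q$ for every one of the finitely many $\q\mid\p$ simultaneously. That is achievable by the Chinese remainder theorem in $\OO_L$, or by taking $t$ with large residue degree.

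\emph{Main obstacle.} I expect the hardest step to be proving that the exceptional set $T$ is finite, i.e.\ that $g_{\overline{t}}(\overline{X})$ is not identically zero in $\overline{t}$ for the fixed $\overline{X}$. My first attempt would use the fact that the leading coefficient in $t$ of $R_2$, which equals $\Res(\Psi^{*2},f)$ up to sign and is built from $R_1$, cannot vanish together with $R_1$ except on $X\Delta=0$. This would show $g_t(X)$ has positive degree in $t$ with a unit leading coefficient away from $X\equiv0$.
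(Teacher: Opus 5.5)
\textbf{There is a genuine gap: the bad-reduction case, which is the only case the proposition is used for, is not argued.}

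Your disposal of $X^{L_2(\bm v)}$ matches the paper. So do the reduction via \Cref{local shape} to showing that only finitely many residues $\overline t$ are bad, and the lifting step. For nonsingular $\overline E$, your intersection argument in $\overline E^2$ is a reasonable alternative to the paper's Cases I and II. The torsion locus is a finite union of translates of an elliptic curve. You must also note that when $|v_1|=|v_2|$ the component through the origin lies in $\{x_2=x_1\}$, so it is avoided only because $\overline X\neq 0$.

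That case is not what the proposition is about. Because $X$ is a unit at $\p$, the hypothesis means $\p$ divides both $\Psi_{\bm v}(\bm P)$ and $\Phi_{\bm v}(\bm P)$. In the setting of \Cref{main result}, where $P_i\not\equiv O$, \Cref{Akbary TFAE} then forces one of $P_1,P_2$ to reduce to the singular point. So $\overline E$ is singular and your first bullet is vacuous.

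For the singular case you give only an expectation. The proposed bound by $\deg_t$ of a resultant is circular: it presupposes that $g_{\overline t}(\overline X)$ is a nonzero polynomial in $\overline t$, which is exactly what must be proved. The fix in your last paragraph also fails. $R_2$ is the product of $\Phi^{*}_{\bm v}-t\Psi^{*2}_{\bm v}$ over the four sign choices of $(y_1,y_2)$, so its leading coefficient in $t$ is exactly $R_1$. It therefore vanishes wherever $R_1$ does.

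What is missing is the case split on the singular cubic that the paper carries out.
\begin{itemize}
\item \emph{Exactly one point singular.} Say $\overline{P_2'}$ is the singular point $(x^*,0)$. There is no group law at $\overline{P_2'}$, so a torsion-locus picture cannot be set up. The needed observation is that, with $\overline X$ fixed, $x(\overline{P_1'})=x^*-\overline X$. Hence $\overline{P_1'}$ is one of at most two points, both nonsingular. The relations then force $\overline t=x([2v_1]\overline{P_1'})$, giving at most two bad residues.
\item \emph{Both points singular.} This is excluded by $\overline X\neq 0$.
\item \emph{Both points nonsingular.} The relations live in $E_{\mathrm{ns}}(\overline{\F_{\p}})$, which is isomorphic to $\mathbb{G}_m$ or $\mathbb{G}_a$. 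Your finiteness-of-pairs step can genuinely fail here. For additive reduction, when the residue characteristic divides both $v_1$ and $v_2$, every pair satisfies the torsion relation; then $[2v_1]\overline{P_1'}=O$ and no finite $\overline t$ arises. So finiteness has to be argued for the values of $\overline t$ directly, not for the pairs.
\end{itemize}
Without these steps the proposal does not prove the proposition at the primes of bad reduction where it is used.
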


\begin{proof}
Let $\nu$ be the valuation associated to the prime $\p$ and assume $\Psi_{\bm{v}}^{*}(\bm{P})\equiv \Psi_{\bm{v}}^{*}(\bm{P}) \equiv 0 \bmod \p^k$. Replacing $y_i$--variables using the Weierstrass equation $y_i^2=x_i^3+Ax_i+B$ will not reduce the $\p$--valuation. Hence, by the definition of $R_1(X, x_2)$ and $R_2(X, x_2)$ in \Cref{R1 R2}, for every $t \in \Z$ we have $w(R_1), w(R_2)\geq k$ for some valuation $w$ extending $\nu$. This also implies that $w\left(X^{L_2(\bm{v})}\Delta^{L_1(\bm{v})}g_t(X)\right) \geq k$, which follows from \Cref{Shape}. Since we have also assumed the reduction of $P_1$ and $P_2$ satisfies $\overline{P_1}\neq \pm \overline{P_2}$, we have $w(X)=0$. Hence, it remains to show the existence of $t\in \overline{\Z}$ such that $w(g_t(X))=0$. 

We first assume that both $P_1$ and $P_2$ have good reduction modulo $\p$, therefore $\overline{P_1}, \overline{P_2} \in E(\F_{\p})$. 

Suppose there exists $t\in  \overline{\Z}$ such that $w(g_t(X))>0$, then by \Cref{local shape}, there exist $\overline{P_1}, \overline{P_2} \in E(\overline{\F_{\p}})$ such that 
\begin{equation}\label{local congruence 2}
\begin{aligned}
\overline{x_2}-\overline{x_1}&=\overline{X}, \\
[v_1]\overline{P_1}\pm[v_2]\overline{P_2}&=\overline{O}, \\
x\left(\pm[v_1]\overline{P_1}\mp[v_2]\overline{P_2}\right)&=\overline{t},
\end{aligned}
\end{equation}
where $\overline{O}$ denotes the identity element on the reduced curve. 

Let $\q$ be a prime lying above $\p$; we split the proof into two cases.

\textbf{Case I:} $|v_1|\neq |v_2|$ \\
In this case, (\ref{local congruence 2}) tells us that 
\[
x_2\equiv X+x_1 \bmod \q \text{ and } x\left([2v_1]P_1\right)\equiv x\left([2v_2]P_2\right)\equiv t \bmod \q. 
\]
Using \Cref{net poly def} (with $n=1$, or equivalently the definition of division polynomials), the above congruence conditions can also be rewritten as 
\[
x\left([2v_1]P_1\right) \equiv \frac{\phi_{2v_1}(x_1)}{\psi_{2v_1}^2(x_1)}\equiv x\left([2v_2]P_2\right) \equiv \frac{\phi_{2v_2}(X+x_1)}{\psi_{2v_2}^2(X+x_1)} \equiv t \bmod \q.
\]
From \cite[Theorem A]{AyadS}, we know that $\psi_{2v_1}^2(x_1)\equiv \psi_{2v_2}^2(X+x_1)\equiv 0 \bmod \q$ if and only if one of the following holds:
	\begin{enumerate}
	\item $[2v_1]P_1 \equiv [2v_2]P_2 \equiv O \bmod \q$; or 
	\item $P_1, P_2$ are both singular modulo $\q$; or
	\item Without loss of generality,  $[2v_1]P_1\equiv O \bmod \q$ and $P_2$ is singular modulo $\q$. 
	\end{enumerate}
The first possibility cannot be true, as it implies $t \equiv \infty \bmod \q$, violating the condition that $t \in \overline{\Z}$. If the second situation were to occur, then $X\equiv 0 \bmod \q$ since there is only one singular point modulo $\q$, contrary to our assumption. Finally, the last possibility implies that $t \in \overline{\Z}$ has to satisfy $t \equiv \infty \bmod \q$ and $\overline{t}$ is the $x$--coordinate of the singular point modulo $\q$ simultaneously, which cannot happen. Therefore, we may rearrange the above as
	\begin{equation}\label{local div poly}
	\phi_{2v_1}(x_1)\psi_{2v_2}^2(X+x_1)-\phi_{2v_2}(X+x_1)\psi_{2v_1}^2(x_1) \equiv 0 \bmod \q.
	\end{equation}
Remain to show that this polynomial is nonzero. To do so, we recall from \cite[Chapter III, Exercise 3.7(b)]{Silverman1} that as polynomials in one variable, 
	\begin{align*}
	\phi_n(x)&=x^{n^2}+(\text{lower order terms}),\\
	\psi_n^2(x)&=n^2x^{n^2-1}+(\text{lower order terms}).
	\end{align*}

If $|v_1|\neq |v_2|$ and $X\equiv x(P_2)-x(P_1) \not\equiv 0 \bmod \q$, then the polynomials in (\ref{local div poly}) cannot cancel out each other due to the difference in degree. Therefore, (\ref{local div poly}) is a polynomial in the variable $x_1$ over $\overline{\F_p}$ of degree at most $4v_1^2+4v_2^2-1$. Since $X$ is a fixed value, every solution of (\ref{local congruence 2}) immediately determines a value of $x_2 \bmod \q$, hence $t \bmod \q$ as well and therefore there are at most $4v_1^2+4v_2^2-1$ distinct values of $x_1$ as roots of the polynomial (\ref{local div poly}). This gives us a finite set of possible values for $\overline{t}\in \overline{\F_{\p}}$ such that (\ref{local congruence 2}) is soluble for some $\overline{P_1}, \overline{P_2} \in E(\overline{\F_{\p}})$, so we can simply choose $t\in \overline{\Z}$ that its reduction $\overline{t}$ is outside this finite set. 

\textbf{Case II:} $|v_1|=|v_2|=v$ \\
In this case, (\ref{local congruence 2}) tells us that there exist $\overline{P_1}, \overline{P_2} \in E(\overline{\F_{\p}})$ such that 
\begin{align}
\overline{X}=\overline{x_2}-\overline{x_1} &\neq 0, \label{eq:X-nonzero}\\
[v](\overline{P_1}\pm \overline{P_2}) &= \overline{O}, \label{eq:plus-minus}\\
x\left([v](\overline{P_1}\mp \overline{P_2})\right) &= \overline{t}. \label{eq:t-condition}
\end{align}
Without loss of generality, we assume $[v](\overline{P_1}+ \overline{P_2}) \equiv O$ and $[v](\overline{P_1}- \overline{P_2})=\overline{t}$ with $\overline{X}$ being fixed here. Set $\overline{Q}=\overline{P_1}+\overline{P_2}$, then $\overline{Q} \in E[v]$. From (\ref{eq:X-nonzero}), $\overline{Q}\neq \overline{O}$, so there are at most $v^2-1$ choices for $\overline{Q}$. 

Next, for each choice of $\overline{Q} \in E[v]$, we consider the rational function
\[
f_{\overline{Q}}(\overline{P_1})=x(\overline{Q}-\overline{P_1})-x(\overline{P_1})-\overline{X} \in \overline{\F_{\p}}(E)^*.
\]
This is indeed a rational function: we have shown that $\overline{Q}\neq \overline{O}$ and assumed $\overline{X}\neq0$, additionally the $x$--coordinate function and any nonzero constant function are rational functions.

As a rational function, $f_{\overline{Q}}(\overline{P_1})$ has $2$ poles at $\overline{P_1}=\overline{Q}$ and $\overline{P_1}=\overline{O}$ each respectively. Since we necessarily have $\deg(\operatorname{div}(f_{\overline{Q}}))=0$, this implies that $f_{\overline{Q}}(\overline{P_1})$ has $4$ zeros. Hence, for each choice of $Q$, there are at most $4$ distinct choices for $\overline{P_1}$ satisfying (\ref{eq:X-nonzero}) and overall a maximum of $4(v^2-1)$ choices for $\overline{P_1}$ satisfying all the congruence conditions simultaneously. Since $\overline{X}$ is fixed, every choice of $\overline{P_1}$ immediately determines a value of $\overline{P_2}$ using (\ref{eq:X-nonzero}). As in Case I, this gives us a finite set of possible values for $\overline{t}\in \overline{\F_{\p}}$ such that (\ref{eq:t-condition}) is soluble for some $\overline{P_1}, \overline{P_2} \in E(\overline{\F_{\p}})$, so we can simply choose $t\in \overline{\Z}$ that its reduction $\overline{t}$ is outside this finite set. 

Next, we handle the case where exactly one of $P_1$ and $P_2$ is singular mod $\q$, say $P_2$. Denote the reduction of $P_1$ and $P_2$ modulo $\q$ by $\overline{P_1}$ and $\overline{P_2}$ respectively. 

We have assumed that $\overline{X}\neq 0$, so $P_1$ cannot have singular reduction and therefore the group law of $E(\overline{\F_{\p}})$ is applicable to $\overline{P_1}$. By \Cref{local shape}, $\overline{P_1}$ and $\overline{P_2}$ satisfy 

\begin{equation*}
\begin{aligned}
\overline{x_2}-\overline{x_1}&=\overline{X}, \\
\overline{[v_1]P_1\pm[v_2]P_2}&=\overline{O}, \\
\overline{x\left(\pm[v_1]P_1\mp[v_2]P_2\right)}&=\overline{t}.
\end{aligned}
\end{equation*}

Since $[v_1]\overline{P_1}$ is a non-singular point on $E(\overline{\F_{\p}})$, so as $\overline{[v_2]P_2}$ by the second condition, therefore the above relations imply that we have $x([2v_1]P_1)\equiv t \bmod \q$. Denote the singular point on $E(\overline{\F_{\p}})$ by $P^*=(x^*, 0)$, then the assumption implies that $x(P_2)\equiv x^* \bmod \q$ and therefore requiring $x(P_1)\equiv x^* -X \bmod \q$. There are at most $2$ points on $E(\overline{\F_{\p}})$ with $x(P_1)\equiv x^* -X \bmod \q$, say $\overline{P_1}=\pm Q$, hence giving at most $2$ possible values for $\overline{t}\in \overline{\F_{\p}}$ such that the above congruences is soluble. We may choose $t\in \overline{\Z}$ that its reduction $\overline{t}$ is outside this finite set, then we cannot have $w(g_t(X))>0$. 

Overall, we showed that for whenever $\overline{X}\neq0$ in $\overline{F_{\p}}$, there exists $t\in \overline{\Z}$ such that $w(X^{L_2(\bm{v})}g_t(X))=0$. For this choice of $t \in \overline{\Z}$, we have the implication 
\[
w\left(X^{L_2(\bm{v})}\Delta^{L_1(\bm{v})}g_t(X)\right) \geq k \implies w\left(\Delta^{L_1(\bm{v})}\right) \geq k,
\]
therefore completing the proof. 
\end{proof}

\begin{remark}\label{X=0 impossible}
If $\overline{X}=0$ in $\overline{F_{\p}}$, it is possible that no such $t \in \overline{Z}$ exists. Specifically, if $|v_1|=|v_2|$, then the set of congruences in (\ref{local congruence 2}) collapses to a system of $2$ congruences only: 
\begin{align*}
\overline{x_2}-\overline{x_1}=\overline{X}&=0, \\
x\left([2v]\overline{P_1}\right)&=\overline{t}.  
\end{align*}
Furthermore, the multiplication map $[v]\colon E \rightarrow E$ is surjective, so any value $\overline{t} \in \overline{\F_{\p}}$ is attained and our method fails. This is one of the reasons why we have the condition $P_1\not\equiv \pm P_2 \bmod \p$ in our main theorem. 
\end{remark}

\begin{proof}[Proof of \Cref{main result}]
Let $\p$ be a prime and $P_1, P_2 \in E(\overline{\Q})$ such that $P_1, P_2 \not\equiv O \bmod \p$ and $P_1\not\equiv \pm P_2 \bmod \p$. Under these assumptions, \cite[Proposition 1.6]{Akbary} asserts that $\nu\left(\Phi_{(v_1, v_2)}(\bm{P})\right), \nu\left(\Phi_{(v_1, v_2)}(\bm{P})\right)\geq 0$. If $\p$ divides both $\Psi_{\bm{v}}^{*}$ and $\Phi_{\bm{v}}^{*}$, then \Cref{Akbary TFAE} tells us that $\p$ divides the discriminant $\Delta$ necessarily.

Under the assumption of $P_1\pm P_2\not\equiv O \bmod \p$, \Cref{separate} shows that if $P_1 \neq \pm P_2 \bmod \p$, then there exists $t\in \overline{\Z}$ such that $g_{t}(X)$ and $\Delta$ cannot share positive $\p$--valuation simultaneously. Hence, the valuation of $\Delta$ in $\Res_{x_2}(R_1, R_2)(X)$ bounds the common valuation of $\Psi_{\bm{v}}^{2}$ and $\Phi_{\bm{v}}$ associated to odd primes of bad reduction from above.

Note that $R_1(X, x_2)$ and $R_2(X, x_2)$ are homogenous in weight, where $X, x_2, A$ and $B$ have weights $2, 2, 4$ and $6$ respectively. Recall from \Cref{Section 4} that $\Psi_{\bm{v}}$ and $\Phi_{\bm{v}}$ are homogenous of weight $g(\bm{v})$ and $2(v_1^2+v_2^2-v_1v_2)$ respectively. Thus, we may deduce that
\begin{align*}
\Deg_{x_2}(R_1)
&\leq \frac{1}{2}(8)\left((v_1^2+v_2^2-v_1v_2-1)+2(v_1^2+v_2^2-v_1v_2)\right) \\
&=4\left(3(v_1^2+v_2^2-v_1v_2)-1\right); \text{ and} \\
\Weight(R_2)
&=4\cdot2(v_1^2+v_2^2-v_1v_2)=8(v_1^2+v_2^2-v_1v_2).
\end{align*}

Since $R_1$ and $R_2$ are both homogenous in weight, so is their resultant by \Cref{resultant properties}. Hence, we can bound the multiplicity of $\Delta$ in $\Res_{x_2}(R_1, R_2)(X)$ by assuming that $\Delta$ is the only term contributing to the weight. This bounds the multiplicity of $\Delta$ in $\Res_{x_2}(R_1, R_2)(X)$ from above by
\[
\frac{32}{12}\left(3(v_1^2+v_2^2-v_1v_2)-1\right)(v_1^2+v_2^2-v_1v_2).
\]
Using the ineuqality $2|v_1v_2|\leq v_1^2+v_2^2$, we obtain
\begin{align*}
\text{Multiplicity of $\Delta$ in $\Res_{x_2}(R_1, R_2)(X)$}
&\leq \frac{8}{3}\left(\frac{9}{2}v_1^2+\frac{9}{2}v_2^2-1\right)\left(\frac{3}{2}v_1^2+\frac{3}{2}v_2^2\right)\\
&\leq 18(v_1^2+v_2^2)^2-4(v_1^2+v_2^2).
\end{align*}
This proves the theorem.
\end{proof}

\begin{remark}
We have computed several examples and believe that an optimal upper bound for the multiplicity of $\Delta$ in $\Res_{x_2}(R_1, R_2)(X)$ is 
\[
\frac{8}{3}\left(v_1^2(v_1^2-1)+v_2^2(v_2^2-1)\right).
\]
However, we have not succeeded in proving this bound and are therefore content with the current upper bound. 
\end{remark}